\documentclass{amsart}

\usepackage{amssymb,amsthm}
\usepackage{eucal}
\usepackage[shortlabels]{enumitem}
\usepackage{thmtools,thm-restate}
\usepackage{hyperref}
\usepackage{esvect}
\setlist[description]{font=\normalfont}

\usepackage{interval}
\intervalconfig{soft  open  fences}
\usepackage{mathtools}

\declaretheorem[numberwithin=section]{theorem}
\declaretheorem[sibling=theorem]{proposition}
\declaretheorem[sibling=theorem]{lemma}

\declaretheorem[style=definition,sibling=theorem]{definition}

\declaretheorem[sibling=theorem]{fact}

\declaretheorem{conjecture}

\DeclareMathOperator{\cf}{cf}
\DeclareMathOperator{\cof}{cof} 
\DeclareMathOperator{\ot}{ot}
\DeclareMathOperator{\ro}{ro}
\DeclareMathOperator{\dom}{dom}

\newcommand{\seq}[2]{\langle #1 \mid #2 \rangle}

\newcommand{\Col}{\textup{Col}}
\newcommand{\Add}{\textup{Add}}
\newcommand{\ON}{\textup{ON}}

\newcommand{\GCH}{\textup{\textsf{GCH}}}

\newcommand{\ZFC}{\textup{\textsf{ZFC}}}

\newcommand{\MM}{\textup{\textsf{MM}}}

\newcommand{\PFA}{\textup{\textsf{PFA}}}

\newcommand{\rest}{\upharpoonright}

\renewcommand{\P}{\mathbb P}

\newcommand{\F}{\mathcal F}

\newcommand{\B}{\mathbb B}

\newcommand{\T}{\mathbb T}
\renewcommand{\S}{\mathbb S}
\newcommand{\G}{\mathcal G}

\newcommand{\gcut}{\textup{\textsf{cut}}}
\newcommand{\gchoose}{\textup{\textsf{choose}}}

\newcommand{\bq}{\textup{``}}
\newcommand{\eq}{\textup{''}}

\newcommand{\candc}{\textsf{\textup{c\&c}}}

\DeclareMathOperator{\prt}{part}

\author{Maxwell Levine}

\title{On Determinacy for Cut and Choose Games of Uncountable Length}

\begin{document}

\begin{abstract} We obtain results on cut and choose games for complete Boolean algebras. Zapletal proved that there is a Boolean algebra $\B$ such that $\G_\omega^\candc(\B)$, the version of the game which ends on the $\omega$'th round, is undetermined. We prove that, assuming the consistency of a proper class of supercompact cardinals, the limit version $\G^{\candc}_{< \lambda}(\B)$, in which there are $\lambda$-many rounds but no concluding round, is consistently determined for all complete Boolean algebras $\B$ and all successor cardinals $\lambda$. In particular, this answers a question of Zapletal \cite[Question 2]{Zapletal1995}. We also show that undetermined instances of the game $\G^\candc_\lambda(\B)$ follow from the approachability property, extending results of Dobrinen, and we prove that undetermined instances are compatible with $\MM^{++}$.\end{abstract}
 
 % The \emph{cut and choose game} was introduced by Ulam in 1964 as an example of a potential source of independence. Jech popularized the study of the cut and choose game $\G^{\candc}_\lambda(\B)$ for a complete Boolean algebra $\B$, where $\lambda$ indicates the length of the game. Zaplatel proved from $\ZFC$ that there is a complete Boolean algebra $\B$ such that $\G^{\candc}_\omega(\B)$ is undetermined, and Dobrinen proved under additional set-theoretic assumptions that $\G^{\textsf{\textup{c\&c}}}_\lambda(\B)$ may be undetermined for uncountable cardinals $\lambda$. 

\maketitle

\section{Introduction}

Ulam introduced the cut and choose game in order to explore the connections between abstract set theory and infinite constructions in physical theories \cite{Ulam1964}. The game starts with a set $X$ and two players: $\gcut$ and $\gchoose$ in modern terminology. First, $\gcut$ divides $X$ into two disjoint sets, and then $\gchoose$ selects one, and then $\gcut$ divides the chosen set. This goes on for countably many steps, and in the end, if the sets that $\gchoose$ has selected have an intersection containing at least two points, then $\gchoose$ wins; otherwise $\gcut$ wins.\footnote{Technically, Ulam gives the winning condition that the set is nonempty. However, this gives $\gchoose$ the winning strategy of selecting a point $z \in X$ ahead of time and always taking the choice containing $z$.} The question that Ulam raised is whether $\gchoose$ might have a winning strategy. Mycielski and Steinhaus had introduced the axiom of determinacy two years beforehand \cite{Mycielski-Steinhaus1962}, so it was natural to consider the existence of winning strategies. The question was intended as an example of a problem that is, ``rather easy to state'' while embodying ``the core properties of abstract sets'' \cite[page 347]{Ulam1964}. Ulam speculated that the answers to such questions may be independent, citing the work of Paul Cohen which at the time was yet unpublished.

The context for this paper is found in work of Jech, who studied the cut and choose game and some of its variants for complete Boolean algebras \cite{Jech1984} (see also \cite{Jech1978}). He found an interplay between winning strategies and distributivity properties of the Boolean algebras and formulated the basic results, for example showing that a complete Boolean algebra adds reals if and only if $\gcut$ has a winning strategy in the cut and choose game of length $\omega$. He asked whether there is a Boolean algebra for which the cut and choose game of length $\omega$ is undetermined, and was answered by Zapletal \cite{Zapletal1995}, who obtained a number of other crucial results.

Zapletal's result on the existence of an undetermined game left the natural question for cut and choose games of uncountable length. Questions of this form arise in other contexts, including the Mycielski games from which the axiom of determinacy is defined. Typically, the questions are more tractable for games of countable length, and less tractable for games of uncountable length (see \cite{Neeman2008}). In  \autoref{zapletal-sec}, we obtain a model in which a version of the cut and choose game of length $\lambda$ but without a $\lambda$'th round is determined of all successors $\lambda$. This was originally motivated by a question of Zapletal \cite[Question 2]{Zapletal1995}, the answer to which is implied by the theorem.

In \autoref{sec-andmore}, we consider variants of the cut and choose game of length $\lambda$ which end with a final round. During the 2000's, Dobrinen obtained extensive results giving hypotheses that imply that uncountable length cut and choose games can be undetermined \cite{Dobrinen2003,Dobrinen2007,Cummings-Dobrinen2007,Dobrinen2008}. In Section 3, we obtain results from weaker hypotheses on models in which uncountable cut and choose games are undetermined.

This paper will consider a number of variants of the cut and choose game which use additional parameters. We will try to balance thoroughness with readability.

% and also commented on the question directly, writing, ``We are particularly interested in finding a Boolean algebra in which [the cut and choose game of length $\kappa$] is undetermined in ZFC'' \cite[page 69]{Dobrinen2007}.

\subsection{Definitions and Notation}

The reader is assumed to be familiar with the basics of forcing and large cardinals (see e.g.\ \cite{Jech2003,Kunen2014}). Some additional definitions that are well-known but do not appear in introductory sources will be given throughout.

We should also comment on some particularities of notation. The notation $\lambda$-distributive indicates that functions with domain $\lambda$ are not added (see below). If $\dot{x}$ is a $\P$-name and $G$ is $\P$-generic over $V$, we will not emphasize that $x = \dot{x}_G$ when the notation makes this obvious. As for images of functions, we write $f[X]=\{f(z) \mid z \in X\}$.

For a Boolean algebra $\B$, we let $\wedge$ and $\vee$ be the meet and join operations, respectively, and we let $\sim b$ indicate the complement of $b$. We let $0_\B$ and $1_\B$ be the respective minimal and maximal elements of $\B$. We let $\B^+ := \{b \in \B \mid b \ne 0_\B\}$. If $b \wedge c = 0_\B$, we write $b \perp c$. Boolean algebras are endowed with a natural order $\le_\B$ where $b \le_\B c$ if and only if $c \wedge b = b$. We say that $\B$ is \emph{$<\nu$}-complete if for all $X \subseteq \B$ with $|X|<\nu$, $\bigvee X \in \B$ and $\bigwedge X \in \B$. We say that $\B$ is \emph{complete} if it is $<\!\nu$-complete for all $\nu$. See the Handbook of Boolean Algebras to refer to elementary concepts \cite{Koppelberg-HOBA}.

\begin{definition} Given a Boolean algebra $\B$ and $b \in \B$, we say that $\{b_0,b_1\}$ is a \emph{partition} of $b$ if $b_0 \vee b_1 = b$ and $b_0 \wedge b_1 = 0_\B$. More generally, we say that $\{b_i : i < \mu\}$ is a partition of $b$ if $b_i \wedge b_j = 0_\B$ for $i<j<\mu$ and $\bigvee_{i<\mu}b_i = b$.

We let $\prt_\mu(b)$ denote the set of partitions of $b$ of cardinality $\mu$, we let $\prt(b) = \prt_2(b)$, and we let $\prt(\B) = \{\prt(b) \mid b \in \B\}$.\end{definition}

Note that we can equivalently define partitions as maximal disjoint subsets of $\B^+$.

\begin{definition}[{\cite[Def.17]{Balcar-Simon-HOBA}}]
 Let $\B$ be a Boolean algebra. Let $\lambda$, $\mu$, and $\nu$ be cardinals such that $\lambda, \mu \geq \omega$ and $\nu \geq 2$.

 \begin{enumerate}

 \item We say that $\B$ is \emph{$(\lambda,\mu,<\!\nu)$-distributive} if for every sequence of partitions $X = \{X_\alpha \mid  \alpha \in \lambda\}$ such that $|X_\alpha| \leq \mu$ there is a dense subset $D \subseteq \B^+$ such that for all $\bar{b} \in D$ and $\alpha \in \lambda$, $|\{c \in X_\alpha \mid c \wedge \bar{b} \neq 0_\B\}| < \nu$.
  
\item If $\B$ is $(\lambda,\mu, 2)$-distributive, then we say that $\B$ is \emph{$(\lambda,\mu)$-distributive}.

\item We say that $\B$ is \emph{$\lambda$-distributive} if it is $(\lambda, \mu)$-distributive for any $\mu$.

\end{enumerate}
\end{definition}

%We stated the three-parameter version of distributivity to orient the reader, but this paper will only focus on two-parameter distributivity.

For complete Boolean algebras, we mention an equivalent statement that relates to the notion of distributivity as it may be more generally understood, but we will not use it for this paper.

\begin{fact} For all infinite cardinals $\lambda$ and all cardinals $\mu$ and $\nu$, the following are equivalent for all \emph{complete} Boolean algebras $\B$.

\begin{enumerate}

\item $\B$ is $(\lambda,\mu,<\nu)$-distributive,

\item for all index sets $X$ of cardinality $\le \lambda$ and $I$ of cardinality $\le \mu$, and all families $\{b_i^\xi : \xi \in X, i \in I\}$ of elements of $\B$,
\[
\bigwedge_{\xi \in X} \bigvee_{i \in I} b_i^\xi = \bigvee_{g:X \to [I]^{<\nu}} \bigwedge_{\xi \in X} \bigvee_{i \in f(\xi)} b_i^\xi.
\]
\end{enumerate}
\end{fact}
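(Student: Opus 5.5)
We prove (1)$\Rightarrow$(2) and (2)$\Rightarrow$(1) directly, translating between partitions and the join/meet identity. We read $f(\xi)$ in the displayed formula as $g(\xi)$. One inequality in (2) is trivial and holds in any complete Boolean algebra: for each $g$ and each $\xi$ we have $\bigvee_{i\in g(\xi)} b^\xi_i \le \bigvee_{i\in I} b^\xi_i$. Taking meets over $\xi$ and then the join over all $g$ gives that the right-hand side is $\le$ the left-hand side. So the content of (2) is the reverse inequality, $\mathrm{LHS}\le\mathrm{RHS}$.

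\emph{(1)$\Rightarrow$(2).} Let $a=\bigwedge_\xi\bigvee_i b^\xi_i$; we may assume $a\neq 0_\B$. We refine the families into partitions of $1_\B$ as follows.
\begin{itemize}
\item Well-order $I$ (of order type $\le\mu$).
\item For each $\xi$, set $c^\xi_i = a\wedge b^\xi_i\wedge \sim\bigvee_{j<i} b^\xi_j$. Completeness is what makes this join exist.
\item The nonzero $c^\xi_i$ are pairwise disjoint and join to $a$, since $a\le\bigvee_i b^\xi_i$.
\item Add $\sim a$ to get a partition $X_\xi$ of $1_\B$ of size $\le\mu$.
\item Pad with dummy indices to make $|X|\le\lambda$ if needed, and discard zero elements.
\end{itemize}
Apply (1) to get a dense $D$. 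Let $\bar b\in D$ with $\bar b\le a$. For each $\xi$, $\bar b$ meets fewer than $\nu$ elements of $X_\xi$, and it does not meet $\sim a$. Let $g(\xi)=\{i: c^\xi_i\wedge\bar b\ne 0_\B\}\in[I]^{<\nu}$. Since $\bar b\le a=\bigvee_i c^\xi_i$, we get $\bar b\le\bigvee_{i\in g(\xi)}c^\xi_i\le\bigvee_{i\in g(\xi)}b^\xi_i$. Hence $\bar b\le\bigwedge_\xi\bigvee_{i\in g(\xi)}b^\xi_i\le \mathrm{RHS}$. Density of $D$ below $a$ then yields $a\le\mathrm{RHS}$: otherwise $a\wedge\sim\mathrm{RHS}$ would be nonzero and would contain some $\bar b\in D$, which is a contradiction.

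\emph{(2)$\Rightarrow$(1).} Given partitions $X_\alpha$, $\alpha<\lambda$, with $|X_\alpha|\le\mu$, enumerate each as $\{b^\alpha_i: i\in I\}$, repeating or inserting $0_\B$ so that $|I|\le\mu$. Then the left-hand side of (2) equals $1_\B$. So $1_\B=\bigvee_g d_g$, where $d_g=\bigwedge_\alpha\bigvee_{i\in g(\alpha)}b^\alpha_i$. Let $D=\{\bar b\in\B^+: \exists g\ (\bar b\le d_g)\}$.

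\begin{itemize}
\item \emph{$D$ is dense.} For $c\ne 0_\B$, we have $c=c\wedge\bigvee_g d_g=\bigvee_g(c\wedge d_g)$, so some $c\wedge d_g\ne0_\B$, and this element lies in $D$ below $c$.
\item \emph{Each $\bar b\le d_g$ meets fewer than $\nu$ pieces of each $X_\alpha$.} Suppose $b^\alpha_j\wedge\bar b\ne 0_\B$. Since $\bar b\le\bigvee_{i\in g(\alpha)}b^\alpha_i$, we get $b^\alpha_j\wedge\bigvee_{i\in g(\alpha)}b^\alpha_i\ne0_\B$. By disjointness, this forces $b^\alpha_j=b^\alpha_i$ for some $i\in g(\alpha)$. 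So the number of pieces met is $\le|g(\alpha)|<\nu$.
\end{itemize}

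The main point requiring care is the first step of (1)$\Rightarrow$(2): disjointifying arbitrary families into genuine partitions while keeping cardinalities $\le\mu$ and $\le\lambda$, and arguing density below $a$ rather than globally. Everything else is routine Boolean algebra using completeness and the infinite distributive law $c\wedge\bigvee_g d_g=\bigvee_g(c\wedge d_g)$, which holds in every complete Boolean algebra.
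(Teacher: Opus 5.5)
Your argument is correct. Note that the paper does not prove this Fact at all: it is quoted as standard background, and the text says it will not be used. So there is nothing in the paper to compare against.

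What you give is the usual argument:
\begin{itemize}
\item disjointify each family below $a=\bigwedge_\xi\bigvee_i b^\xi_i$ along a well-order of $I$, so the resulting partitions still have size $\le\mu$;
\item read off $g$ from a member of the dense set lying below $a$;
\item conversely, take $D$ to be the nonzero elements lying below some $d_g$.
\end{itemize}
Reading $f(\xi)$ as $g(\xi)$ is the right correction of the typo in the statement.

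A few small points should be made explicit.
\begin{itemize}
\item The bound $|g(\xi)|<\nu$ uses that $i\mapsto c^\xi_i$ is injective on $g(\xi)$. Distinct indices give disjoint elements, and these are nonzero there, so they are distinct.
\item Adding the extra piece $\sim a$ keeps $|X_\xi|\le\mu$ only because $\mu$ is infinite, as the paper's definition requires. For finite $\mu$ you would instead merge $\sim a$ into one of the $c^\xi_i$; for $\bar b\le a$ this changes nothing.
\item The padding step should produce a sequence of partitions indexed by exactly $\lambda$, not merely make $|X|\le\lambda$. You can do this by repeating one of the given partitions, or by adding copies of $\{1_\B\}$, which uses $\nu\ge 2$.
\end{itemize}
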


\begin{fact} For all infinite cardinals $\lambda$ and all cardinals $\mu$ and $\nu$, the following are equivalent for all Boolean algebras $\B$.

\begin{enumerate}

\item $\B$ is $(\lambda,\mu,<\nu)$-distributive,

\item if $G$ is $\B$-generic over $V$ and $f \in V[G]$ is function $f:\lambda \to \mu$, then there is a function $g \in V$ such that $g: \lambda \to [\mu]^{<\nu}$ and for all $\xi < \lambda$, $f(\xi) \in g(\xi)$.

\end{enumerate}
\end{fact}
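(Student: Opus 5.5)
The equivalence is proved by translating in both directions between partitions in $\B$ and $\B$-names for functions $\lambda\to\mu$. Fix a $\B$-generic $G$; forcing with a Boolean algebra means forcing with $\B^+$. If $\B$ is not complete, we pass to its completion $\ro(\B^+)$. This changes neither the generic extensions nor the density notions used, because $\B^+$ is dense in its completion and a partition of $\B$ is a maximal antichain in $\B^+$. In particular, partitions of $\B$ remain maximal antichains in the completion.

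For (1)$\Rightarrow$(2), let $\dot f$ be a name with $p\Vdash \dot f:\lambda\to\mu$. For each $\xi<\lambda$, pick a maximal antichain $X_\xi$ below $p$ of conditions deciding $\dot f(\xi)$; each $c\in X_\xi$ forces $\dot f(\xi)=\beta_c$. Add $\sim p$ if needed to make it a partition of $1_\B$. The sizes of these antichains need not be $\le\mu$. We fix this by amalgamating: let $X'_\xi=\{\bigvee\{c\in X_\xi: \beta_c=\beta\}:\beta<\mu\}\setminus\{0_\B\}$. These joins exist by completeness, the result is a partition of size $\le\mu$, and each element decides $\dot f(\xi)$. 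Apply $(\lambda,\mu,<\nu)$-distributivity to get a dense $D$. By density, some $\bar b\in D$ with $\bar b\le p$ lies in $G$. In $V$, define $g(\xi)=\{\beta: \text{the element of }X'_\xi\text{ deciding }\beta\text{ meets }\bar b\}$, which has size $<\nu$. Since $\bar b\in G$ and $G$ meets $X'_\xi$ in an element compatible with $\bar b$, we get $f(\xi)\in g(\xi)$.

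For (2)$\Rightarrow$(1), take partitions $X_\alpha$ with $|X_\alpha|\le\mu$, and enumerate $X_\alpha=\{c^\alpha_\beta:\beta<\mu\}$, repeating elements if necessary. Let $\dot f$ name the function with $f(\alpha)$ the least $\beta$ such that $c^\alpha_\beta\in G$. Let $D$ be the set of $\bar b$ such that $\bar b$ meets fewer than $\nu$ members of every $X_\alpha$. To show $D$ is dense, fix $q$ and a generic $G\ni q$. By (2) there is $g\in V$ covering $f$, and some $r\le q$ in $G$ forces $\dot f(\alpha)\in\check g(\alpha)$ for all $\alpha$. This $r$ meets only elements $c^\alpha_\beta$ with $\beta\in g(\alpha)$. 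Indeed, if $r\wedge c^\alpha_\beta\neq 0$ with $\beta\notin g(\alpha)$, then $r\wedge c^\alpha_\beta$ forces $f(\alpha)=$ the index of $c^\alpha_\beta$, which contradicts the choice of $r$. The repeated-enumeration issue is handled by choosing least indices. Hence $r\in D$ and $r\le q$.

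The main obstacle is the size bookkeeping in (1)$\Rightarrow$(2): the deciding antichains can be larger than $\mu$, and amalgamating them requires completeness. For a non-complete $\B$, one must either work in the completion and check that (1) transfers to it, or refine suitably. Transfer is plausible, since a partition of the completion of size $\le\mu$ can be refined by elements of $\B^+$, but the refinement may enlarge sizes. The cleanest fix is to apply (1) only to amalgamated partitions within $\B$ when those joins exist. I expect this point to need the most care, and I would first try the completion route.
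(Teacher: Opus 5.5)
The paper states this as a Fact without proof, so there is no argument of the paper to compare routes with. Judged on its own, your argument is the standard one and is correct when $\B$ is complete. In (1)$\Rightarrow$(2) your amalgamated blocks are just the Boolean values $\|\dot f(\xi)=\check\beta\|$. Your (2)$\Rightarrow$(1) argument, via the name for the least index of the block of $X_\alpha$ met by $G$, works for arbitrary $\B$. (One small repair: for finite $\mu$, e.g.\ $\mu=2$, which the paper also uses, adjoining $\sim p$ as an extra block can push the partition past size $\mu$. Instead, redefine $\dot f$ to be constantly $0$ below $\sim p$, so that $1_\B\Vdash \dot f:\lambda\to\mu$.)

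The genuine gap is (1)$\Rightarrow$(2) for non-complete $\B$, and it cannot be closed the way you propose. You assert that passing to $\ro(\B)$ changes neither the generic extensions nor the density notions. That is right for the extensions, hence for (2), but wrong for (1): property (1) does not transfer from $\B$ to its completion. In fact the statement as written is false for non-complete algebras.

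Let $\B_0\subseteq\mathcal P(\omega_1^\omega)$ be the field of sets generated by the cones $[s]=\{x\in\omega_1^\omega : s\subseteq x\}$, $s\in\omega_1^{<\omega}$.
\begin{itemize}
\item The cones are dense in $\B_0^+$, so forcing with $\B_0$ is forcing with the tree $\omega_1^{<\omega}$, i.e.\ with $\Col(\omega,\omega_1)$. This adds a new real (e.g.\ $\{n : x(n)=0\}$ for the generic branch $x$), so (2) fails for $\lambda=\mu=\omega$, $\nu=2$.
\item Take countably many countable partitions of $\B_0$. Their blocks are finite Boolean combinations of countably many cones $[s]$, all with $\mathrm{ran}(s)\subseteq\gamma$ for a single $\gamma<\omega_1$.
\item Let $[t]$ be any cone where $t$ takes some value $\ge\gamma$. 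Then $[t]$ is either contained in or disjoint from each such $[s]$, since $t\subsetneq s$ is impossible. Hence $[t]$ lies inside exactly one block of each partition.
\item Such cones are dense, so (1) holds.
\end{itemize}

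So the amalgamation step really needs joins of arbitrary antichains. The correct fix is to restrict the equivalence to complete $\B$, as in the preceding Fact and in all of the paper's applications, or to read (1) as a property of $\ro(\B)$. With that restriction your proof is complete. Your final paragraph should be replaced by this restriction rather than by an attempted transfer to the completion.
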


Note that if $\nu =2$ and we are talking about $(\lambda,\mu)$-distributivity, then point \emph{(2)} reduces to the statement that $\B$ does not add new functions $f:\lambda \to \mu$.

Let us define the simplest version of the game informally. It is a game of perfect information between two players, $\gcut$ and $\gchoose$. The cut and choose game of length $\lambda$ begins with $\gcut$ making an opening move $\bar{b} \in \B$ and selecting a partition $\{b^0_0,b_0^1\} \in \prt(\bar{b})$. Then choose selects an element of the partition $b_0^{\epsilon_0}$ where $\epsilon_0 \in \{0,1\}$. Then $\gcut$ plays another partition, and so on. At limits, if there is a nonzero infimum of $\gchoose$'s choices, then the game proceeds with $\gcut$ partitioning the infimum. If the infimum is zero, $\gchoose$ loses. If $\lambda$-many rounds occur without $\gchoose$ losing, then $\gchoose$ wins at the end if $\bigwedge_{i<\lambda}b_\xi^{\epsilon_\xi} \ne 0_\B$.

\begin{center}
\def\arraystretch{1.25}
\begin{tabular}{c | c | c | c | c | c | c | c} 
  $\gcut$ & $\bar{b},\{b_0^0,b_0^1\}$ &  & $\{b_1^0,b_1^1\}$ & $\ldots$  & $\{b_\xi^0,b_\xi^1\}$ & &\\ 
 \hline
  $\gchoose$ &  & $b_0^{\epsilon_0}$ &  & $\ldots$ &  & $b_\xi^{\epsilon_\xi}$ & $\ldots$ 
\end{tabular}
\end{center}

Now let us present a more formal definition with more parameters---namely, a parameter that allows $\gcut$ to play larger partitions, and a parameter that allows $\gchoose$ to select multiple components of a partition. Moreover, we will state a definition in such generality that it can include the limit versions of the game, where a run of length $\lambda$ does not require a $\lambda$'th round.

\begin{definition} Fix a complete Boolean algebra $\B$. Let $\lambda,\mu,\nu$ be cardinals with $\mu \ge \nu \ge 2$. We describe \emph{the cut and choose game} $\G^{\mu,<\nu}_{<\lambda}(\B)$. This is a game of perfect information between two players, denoted $\gcut$ and $\gchoose$ .

\begin{enumerate}

\item A partial play of $\G^{\mu,<\nu}_{<\lambda}(\B)$ of length $\eta$ where $\eta+1 < \lambda$ takes the form
\[
\vec{s} = \bar{b} {}^\frown \seq{\{b_i^\xi : i< \mu \},B_\xi}{\xi < \eta}
\]
where the $B_\xi \in [\mu]^{<\nu}$ for all $\xi < \eta$ if it is $\gcut$'s turn, or
\[
\vec{s} = \bar{b} {}^\frown \seq{\{b_i^\xi : i< \mu \},B_\xi}{\xi < \eta} {}^\frown \{b_i^\eta : i< \mu \}
\]
if it is $\gchoose$'s turn,
\[
\{b^\xi_i: i < \mu \} \in \prt_\mu \left(\bigwedge_{\zeta<\xi}\bigvee_{i \in B_\zeta} b_i^\zeta \right)
\]
for all $\xi \le \eta$ where we are inductively assuming that 
\[
0_\B \ne \bigwedge_{\zeta<\xi}\bigvee_{i \in B_\zeta}b_i^\zeta,
\]
and $B_\xi \in [\mu]^{<\nu}$ for all $\xi \le \eta$. We refer to $\bar{b}$ as the \emph{opening move}, we refer to $\{b_i^\xi:i<\mu\}$, $Z_\xi$ as the \emph{$\xi$'th round}.

\item The player $\gchoose$ wins the game if, for all plays,
\[
\bar{b} \wedge \bigwedge_{\xi<\eta} \bigvee_{i \in B_\xi} b_i^\xi \ne 0_\B
\]
where $\bar{b}$ is the opening move.
\end{enumerate}

We also employ the following conventions:

\begin{itemize}[$\circ$]
\item We let $\G_\lambda^{\mu,\nu}(\B)$ denote $\G^{\mu,<(\nu+1)}_{<(\lambda+1)}(\B)$, and we handle analogous parameterizations with with non-strict inequalities similarly.
\item We let $\G_{<\lambda}^{<\mu}(\B)$ denote $\G_{<\lambda}^{\mu,1}(\B)$. 
\item We let $\G_\lambda^\candc(\B)$ denote $\G^2_\lambda(\B)$.
\item There is an analogous game denoted $\G^{<\mu,<\nu}_{<\lambda}(\B)$, in which $\gcut$ plays partitions of cardinality less than $\mu$, but this is slightly more notationally cumbersome and does not appear in this paper.
\end{itemize}
\end{definition}

In our notation, $\G_{\omega}^2(\B)$ is the standard cut and choose game of length $\omega$.

\begin{fact}\label{two-games} Let $\B$ be a complete Boolean algebra and $\lambda$ a cardinal. The existence winning strategy for $\gcut$ or, respectively, $\gchoose$, is equivalent for the following games:

\begin{enumerate}

\item $\G_{<\lambda}^{\mu,<\nu}(\B)$,

\item an alternate version of $\G_{<\lambda}^{\mu,<\nu}(\B)$ which is identical except that partial plays take the form 
\[
\vec{s} = \bar{b} {}^\frown \seq{\{b_i^\xi : i< \mu \},B_\xi}{\xi < \eta}
\]
where
\[
\{b_\xi^i : i < \mu\} \in \prt(1_\B).
\]
and $B_\xi \in [\mu]^{<\nu}$ for all $\xi < \eta$, and the winning conduction requires that
\[
\bar{b} \wedge \bigwedge_{\xi<\eta} \bigvee_{i \in B_\xi}b^\xi_i \ne 0_\B.
\]
\end{enumerate}
\end{fact}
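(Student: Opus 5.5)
The plan is to translate strategies back and forth between the two games. Fix $\B$ and write $\G$ for the original game and $\G'$ for the alternate version, in which every partition is a partition of $1_\B$ and only the final winning condition refers to $\bar{b}$ and to the chosen pieces. The basic dictionary runs as follows. A partition $\{b_i^\xi : i<\mu\}$ of the current element $c_\xi := \bar b \wedge \bigwedge_{\zeta<\xi}\bigvee_{i\in B_\zeta} b_i^\zeta$ becomes a partition of $1_\B$ by adding the complement $\sim c_\xi$ to one designated piece, say index $0$. If the relevant piece is zero, we use instead a re-indexing where a zero piece is allowed, or we split off $\sim c_\xi$ as an extra piece absorbed into some index. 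Conversely, a partition $\{d_i : i<\mu\}$ of $1_\B$ becomes a partition of $c_\xi$ by meeting each piece with $c_\xi$: $\{d_i \wedge c_\xi : i<\mu\}$. Under either translation the set $B_\xi$ chosen by $\gchoose$ can be copied verbatim. By induction on $\xi$, the running meets then agree after intersecting with $c_\xi$:
\[
\bar b \wedge \bigwedge_{\zeta<\xi}\bigvee_{i\in B_\zeta} b_i^\zeta \;=\; \bar b \wedge \bigwedge_{\zeta<\xi}\bigvee_{i\in B_\zeta} d_i^\zeta .
\]
At successor steps this uses $c_\xi \wedge \bigvee_{i\in B}(d_i\wedge c_\xi) = c_\xi \wedge \bigvee_{i\in B} d_i$, and on the other side that adding $\sim c_\xi$ to a piece does not change the meet with $c_\xi$. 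Limit steps hold by completeness, using associativity of infinite meets.

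With the dictionary in place, the four transfers follow. Given a winning strategy for $\gcut$ in $\G$, $\gcut$ plays in $\G'$ by running the $\G$-strategy on the translated history and enlarging the partitions of $c_\xi$ to partitions of $1_\B$. Given a winning strategy for $\gcut$ in $\G'$, $\gcut$ plays in $\G$ by meeting its partitions with $c_\xi$. Strategies for $\gchoose$ transfer the same way, simply by copying the $B_\xi$ chosen against the translated partitions. The displayed identity shows that a run in one game has nonzero final meet if and only if the corresponding run does.

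The main obstacle is the mismatch in the rules about when the game stops. In $\G$, a zero meet at a limit ends the game immediately, as a loss for $\gchoose$, whereas in $\G'$ play continues and only the final meet is tested. The identity handles this, because the meets are monotone decreasing: once the meet is zero at some stage, every later meet is zero. So a run of $\G'$ in which the meet vanishes at a limit is already lost for $\gchoose$. The translating player can then respond arbitrarily, for instance with the trivial partition $\{1_\B,0,\dots\}$, and the outcome is unchanged. A second, minor point is that $\prt_\mu$ formally requires partitions of cardinality exactly $\mu$ with nonzero pieces. I would handle this by allowing zero or repeated pieces via indexing by $\mu$, which matches the indexed notation $\{b_i^\xi : i<\mu\}$ used in the definition, and observe that this relaxation does not affect the existence of winning strategies.
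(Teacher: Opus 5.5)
The paper states this as a Fact and gives no proof, so there is nothing to compare against. Your translation argument (pad $\gcut$'s partition of $c_\xi$ with $\sim c_\xi$ in one direction, meet with $c_\xi$ in the other, copy the $B_\xi$) is the standard proof, and it is correct. The key step is the infinite distributive law $c_\xi\wedge\bigvee_{i\in B}d_i=\bigvee_{i\in B}(c_\xi\wedge d_i)$, which holds in every complete Boolean algebra. It makes the running meets equal at every stage, and that one identity handles all four transfers and the limit-stage termination.

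Drop your closing claim that allowing zero pieces ``does not affect the existence of winning strategies''. You do not need it, and it is false in general. If $\gcut$ must play exactly $\mu$ nonzero pieces, the game stalls whenever the current element is some $c$ for which $\B\rest c$ has no antichain of size $\mu$. For instance, for the Cohen algebra with $\mu=\omega_1$, $\gcut$ has no legal move at all. With zero pieces allowed, by contrast, $\gcut$ wins $\G^{\omega_1}_\omega(\B)$, because the Cohen algebra adds a real.

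Simply read both games with the relaxed convention, which the paper's indexed notation $\{b_i^\xi : i<\mu\}$ permits. Under that reading your argument is complete.
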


In other words, the cut and choose game is equivalent to the version where $\gcut$ only partitions $1_\B$ rather than partitioning $\gchoose$'s selections.

Sometimes, the notation $\G^\infty_\lambda(\B)$ is used to indicate that $\gcut$ may play partitions of arbitrary cardinality. The Banach-Mazur game of length $\kappa$ on Boolean algebras can be written as $\G^\infty_\kappa(\B)$ for the given Boolean algebra $\B$ because work of Jech  and Veli{\v c}kovi{\'c} (\cite[Theorem 2]{Jech1984} and \cite[Theorem 2.1]{Velickovic1986}) shows that there is an equivalence.

\section{Extending a Theorem of Zapletal}\label{zapletal-sec}

\subsection{Two-Parameter Strong Distributivity and the Existence of Winning Strategies}

For this section we want to establish an approach for studying the existence  or non-existence of winning strategies in the cut and choose game.

For purposes of discussion and later use, we recall the definition of strategic closure for posets (see \cite[Section 6]{Handbook-Cummings}).

\begin{definition} Fix a poset $\P$. We define the \emph{Banach-Mazur game of length $\lambda$} on $\P$.

The game is a game of perfect information between two players. The play of the game is a decreasing sequence of conditions $\seq{p_\xi}{\xi<\eta}$ in which Player \textsf{II} chooses conditions $p_\xi$ at limits $\xi$ as well as even successor ordinals (successors of the form $\xi+n$ where $\xi$ is a limit and $n$ is even), and Player \textsf{I} chooses conditions at odd successor ordinals. Player \textsf{II} wins a play $\seq{p_\xi}{\xi < \eta}$ of length $\eta$ if it is possible for the players to make a move at every step of the play, i.e.\ there is a lower bound of the sequence $\seq{p_\xi}{\xi<\xi'}$ for all $\xi' < \eta$.

We say that $\P$ is $\eta$\emph{-weakly strategically closed}, or simply that $\P$ is $\eta$\emph{-strategically closed}, if Player \textsf{II} has a winning strategy for all games of length $\eta$. In other words, $\P$ is $\eta$\emph{-strategically closed} if the there is a function $\sigma: {}^{<\eta} \P \to \P$ such that for all $\xi < \eta$, if $p_\xi = \sigma(\seq{p_\zeta}{\zeta<\xi})$ for all $\xi<\eta$ such that $\xi$ is either a limit or an even successor, then $\seq{p_\zeta}{\zeta<\xi}$ has a lower bound.\end{definition}

The notion of \emph{strong distributivity} was introduced by Jakob \cite[Definition 3.1]{Jakob2025}, who observed that it characterizes existence of a winning strategy for the adversary in the Banach-Mazur game, as illustrated in a theorem of Foreman, which generalized a theorem of Jech:

\begin{fact} Player $\mathsf{I}$ has a winning strategy in the Banach-mazur game of length $\lambda+1$ on $\B$ if and only if $\B$ is not $\le \lambda$-distributive \cite[page 718]{Foreman1983}.\end{fact}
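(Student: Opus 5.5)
The plan is to prove the two directions separately. The non-distributive direction will have player $\mathsf{I}$ decide the values of a new function along the $\lambda$ odd rounds. The distributive direction will use a generic filter to defeat any strategy of $\mathsf{I}$. Throughout, ``$\le\lambda$-distributive'' means that forcing with $\B$ adds no new function $\lambda\to\ON$, read through the second distributivity Fact. One subtlety must be fixed first. Since player $\mathsf{II}$ makes the opening move, $\mathsf{II}$ may play into a part of $\B$ that is distributive. So the equivalence has to be read locally: ``not $\le\lambda$-distributive'' should mean that every $b\in\B^+$ forces some new function $\lambda\to\ON$, i.e.\ $\B$ is nowhere $\le\lambda$-distributive. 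Equivalently, we work in $\B\rest b$. I would state this convention explicitly before starting.

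\emph{Non-distributive $\Rightarrow$ $\mathsf{I}$ wins.} Assume every condition forces a new function. Fix a name $\dot f$ such that $1_\B\Vdash \dot f:\lambda\to\ON$ and $\dot f\notin V$. To get this, choose such names locally below the members of a maximal antichain and amalgamate them by mixing. The set of odd successor stages $<\lambda$ has order type $\lambda$; enumerate it as $\seq{\xi_\alpha}{\alpha<\lambda}$. At stage $\xi_\alpha$, player $\mathsf{I}$ plays some $p_{\xi_\alpha}\le p_{\xi_\alpha-1}$ that decides $\dot f(\alpha)$. Now suppose the play reaches stage $\lambda$ and $q$ is a lower bound of $\seq{p_\xi}{\xi<\lambda}$. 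Then $q$ decides every value $\dot f(\alpha)$, so $q\Vdash \dot f=\check g$ for some $g\in V$. This contradicts $q\Vdash\dot f\notin V$. Hence $\mathsf{II}$ cannot move at stage $\lambda$, and $\mathsf{I}$ wins.

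\emph{$\le\lambda$-distributive $\Rightarrow$ $\mathsf{I}$ has no winning strategy.} Fix a strategy $\sigma$ for $\mathsf{I}$, and let $G$ be $\B$-generic over $V$. In $V[G]$ we build a run according to $\sigma$ whose conditions all lie in $G$.
\begin{enumerate}
\item At an even successor stage, with the current play $\vec p\in V$ and last condition in $G$, consider the set of conditions $\sigma(\vec p{}^\frown r)$, where $r$ ranges over legal moves. This set is dense below the last condition, so some $r$ with $\sigma(\vec p{}^\frown r)\in G$ exists. Player $\mathsf{II}$ plays such an $r$, and then $r\in G$ as well.
\item At a limit stage $\eta\le\lambda$, the play so far is a sequence of length $\eta\le\lambda$ of elements of $\B$. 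Coding $\B$ by ordinals, distributivity puts this sequence in $V$, say equal to $s$. Choose a name for the construction, and some $r\in G$ forcing that it equals $\check s$ and lies inside $\dot G$. Then $r$ lies below every term of $s$, so $r$ is a lower bound in $G$, and $\mathsf{II}$ plays it. The same argument at $\eta=\lambda$ shows the final lower bound exists.
\end{enumerate}
So in $V[G]$ there is a full run according to $\sigma$ that $\mathsf{II}$ wins, and the run is in $V$ by distributivity. Whether a given run is won is absolute between $V$ and $V[G]$. Hence $\sigma$ is not winning in $V$.

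The main obstacle is the limit step in the second direction. The run is defined in $V[G]$ by choices depending on $G$, and we must make sure two things hold: the initial segments really lie in $V$, and a single condition of $G$ bounds them. I would handle this by fixing a well-order of $\B$ in $V$, so that ``least suitable move'' gives a name for the whole construction. Then I apply the second distributivity Fact to the resulting function $\eta\to\B$, and use genericity to find a condition in $G$ deciding the function completely.
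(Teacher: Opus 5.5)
Your proof is correct. You are also right about the opening move. If $\mathsf{II}$ moves at stage $0$, the equivalence only holds under your ``nowhere'' reading: if $\B$ has an atom $a$ and some condition below $\sim a$ forces a new function $\lambda\to\ON$, then $\mathsf{II}$ opens with $a$ and wins. Alternatively, requiring $\mathsf{II}$ to open with $1_\B$ makes the literal statement true. Your argument handles that version as well, with $\mathsf{I}$'s first move going below a condition that forces a new function.

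The paper does not prove this Fact; it is quoted from Foreman. The standard proof of the hard direction, which the paper imitates in \autoref{foreman-generalization}, stays in $V$:
\begin{itemize}
\item one builds a tree of partial plays according to $\sigma$ whose levels are maximal antichains of $\mathsf{I}$'s replies;
\item distributivity at limit levels shows that the nonzero meets along branches again form a maximal antichain;
\item a final use of distributivity gives a nonzero element lying below one node on every level, i.e.\ a branch that $\mathsf{II}$ wins.
\end{itemize}
You replace all of this with a generic filter. Density of $\{\sigma(\vec p{}^\frown r)\}$ below the current condition does the work of the maximal antichains. The forcing form of distributivity puts the play in $V$. The fact that a sequence $s\in V$ with $\operatorname{ran}(s)\subseteq G$ has a lower bound in $G$ replaces the analysis of limit levels. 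Your route is shorter and avoids that bookkeeping. The tree route works directly from the partition definition of distributivity, needs no generic filters over $V$, and is the form the paper transfers to cut and choose games and strong distributivity in \autoref{foreman-generalization}.

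One step needs a small repair. Step (1) requires the last condition of the current play to lie in $G$. At a limit stage $\eta<\lambda$ you only arrange that $\mathsf{II}$'s move $r$ lies in $G$. $\mathsf{I}$'s reply $\sigma(s{}^\frown r)$ need not lie in $G$, and then step (1) cannot be applied at stage $\eta+2$. The fix: every $r'\le r$ is still a lower bound of $s$, so $\{\sigma(s{}^\frown r') : r'\le r\}$ is dense below $r$. Pick $r'$ with $\sigma(s{}^\frown r')\in G$ and let $\mathsf{II}$ play $r'$. Treat stage $0$ the same way, working below $b$.

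Also, the well-order and the name for the whole construction are unnecessary. Once $s\in V$ by distributivity, any $r\in G$ forcing $\operatorname{ran}(\check s)\subseteq\dot G$ is already a lower bound of $s$, by separativity.
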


Jakob noted that this characterization can be generalized and found a number of applications (see also \cite{Jakob2026}). We define the natural two- and three-parameter versions of strong distributivity.

\begin{definition} Let $\B$ be a Boolean algebra, let $\gamma$ be an ordinal, and let $\mu$ be a cardinal.

\begin{enumerate}
\item We say that $\B$ is \emph{strongly $(<\!\gamma,\mu,<\nu)$-distributive} if for all $\bar{b} \in \B$ and all sequences $\seq{\{b^\xi_i:i<\mu\}}{\xi<\gamma}$ of partitions of $\B$ compatible with $\bar{b}$, there is a $\le_\B$-decreasing sequence $\seq{c_\xi}{\xi<\gamma}$ such that for all $\xi<\gamma$, $|\{i < \mu  \mid c_\xi \wedge b_i^\xi \ne 0_\B \}| < \nu$.
\item $\B$ is \emph{strongly $(<\!\gamma,\mu)$-distributive} if $\nu = 2$.
\end{enumerate}\end{definition}

Dobrinen investigated the relationship between winning strategies and three-parameter distributivity. The question of equivalence between three-parameter distributivity and non-existence of a winning strategy for $\gcut$ in the appropriate game is complex, but one direction is relatively simple (see \cite[Theorem 2.2]{Dobrinen2003}).

\begin{lemma}\label{3par-dist-ws} Let $\B$ be a complete Boolean algebra, let $\lambda$ and $\mu$ be cardinals, and suppose that $\B$ is not strongly $(<\!\lambda,\mu,<\nu)$-distributive. Then $\gcut$ has a winning strategy in $\G^{\mu,<\nu}_{<\lambda}(\B)$.\end{lemma}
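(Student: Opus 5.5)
The plan is to use the failure of strong distributivity directly as a strategy for $\gcut$, working in the alternate version of the game from Fact~\ref{two-games}, where $\gcut$ partitions $1_\B$ at every round. By hypothesis there is a witness: an element $\bar{b}\in\B^+$ and a sequence $\seq{\{b^\xi_i:i<\mu\}}{\xi<\lambda}$ of partitions of $1_\B$, each compatible with $\bar{b}$. For this sequence, no $\le_\B$-decreasing sequence $\seq{c_\xi}{\xi<\lambda}$ of nonzero elements satisfies $|\{i<\mu \mid c_\xi\wedge b^\xi_i\ne 0_\B\}|<\nu$ for every $\xi<\lambda$. (I read the length parameter in the definition as $\lambda$ and the $c_\xi$ as nonzero, which is the only meaningful reading.)

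$\gcut$ plays the fixed opening move $\bar{b}$ and, at round $\xi$, plays the partition $\{b^\xi_i:i<\mu\}$, ignoring $\gchoose$'s replies. Suppose toward a contradiction that some run $\bar{b}{}^\frown\seq{\{b^\xi_i\},B_\xi}{\xi<\eta}$ according to this strategy is won by $\gchoose$. Then $\eta$ can be as large as $\lambda$, and for every $\eta'\le\eta$ the meet $\bar{b}\wedge\bigwedge_{\zeta<\eta'}\bigvee_{i\in B_\zeta}b^\zeta_i$ is nonzero. In particular, if $\gchoose$ survives all $\lambda$ rounds, set
\[
c_\xi := \bar{b}\wedge\bigwedge_{\zeta\le\xi}\bigvee_{i\in B_\zeta} b^\zeta_i .
\]
This is $\le_\B$-decreasing and nonzero. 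Since $c_\xi\le\bigvee_{i\in B_\xi}b^\xi_i$ and the $b^\xi_i$ are pairwise disjoint, $c_\xi\wedge b^\xi_i=0_\B$ for every $i\notin B_\xi$. Hence $|\{i \mid c_\xi\wedge b^\xi_i\ne0_\B\}|\le|B_\xi|<\nu$. This contradicts the choice of the witness.

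The main point needing care is the bookkeeping of the game's length. In $\G_{<\lambda}$, plays have length $\eta$ with $\eta+1<\lambda$, and the winning condition quantifies over all plays. We must therefore check that a winning run for $\gchoose$ really yields a full sequence of length $\lambda$. A run losing at stage $\eta<\lambda$ would show that the meet becomes $0_\B$, so a $\gchoose$ victory means nonzero meets at every $\eta<\lambda$. These are exactly the $c_\xi$ for all $\xi<\lambda$, so the infinite meet at stage $\lambda$ itself is never needed. That matches "strongly $(<\lambda,\dots)$", and is why no completeness beyond $<\lambda$ meets is used. Finally, we transfer the winning strategy back to the original game via Fact~\ref{two-games}.
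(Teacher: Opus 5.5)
Your proof is correct and follows the paper's argument. $\gcut$ opens with $\bar{b}$ and plays the witnessing partitions $\{b^\xi_i:i<\mu\}$ in order; a run that $\gchoose$ survives then yields a $\le_\B$-decreasing sequence of nonzero elements that contradicts the witness. Two of your choices are slightly more careful than the paper's write-up: you take $c_\xi=\bar{b}\wedge\bigwedge_{\zeta\le\xi}\bigvee_{i\in B_\zeta}b^\zeta_i$ rather than the paper's meet over $\zeta<\eta$, which gets the indexing right, and you cite Fact~\ref{two-games} explicitly to justify that $\gcut$ may play partitions of $1_\B$.
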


\begin{proof} Let $\bar{b} \in \B^+$ and let $\seq{\{b_i^\xi:i<\mu \}}{\xi<\lambda}$ be a sequence of partitions witnessing the failure of strong $(<\!\lambda,\mu,<\!\nu)$-distributivity, meaning that there is no $\le_\B$-decreasing sequence $\seq{c_\xi}{\xi<\lambda}$ below $\bar{b}$ such that  $c_\xi$ is compatible with $b_i^\xi$ for fewer than $\nu$-many indices for all $\xi<\lambda$. Then the winning strategy for $\gcut$ is to open with $\bar{b}$ and to play $\{b_i^\xi:i<\mu\}$ at the $\xi$'th round. Suppose for contradiction that $\gchoose$ has a successful play in the form of $f:\lambda \to [\mu]^{<\nu}$ choosing indices $i$ at the $\xi$'th round. Then for each $\eta<\lambda$, we define $c_\eta$ where
\[
0_\B \ne c_\eta := \bar{b} \wedge \bigwedge_{\xi<\eta} \bigvee_{i \in f(\xi)}b_i^\xi,
\]
and in particular $\seq{c_\eta}{\eta<\lambda}$ is $\le_\B$-decreasing. But then it is in fact the case that $b$ is compatible with $b_i^\xi$ only for indices in the set $f(\xi)$ for each $\xi<\lambda$, a contradiction.\end{proof}

Nonexistence of a winning strategy for $\gcut$ will give us an opening to apply the ideas from \autoref{zapletal-answer}. Here is a variation of lemmas found in Jech \cite{Jech1984}, Foreman \cite{Foreman1983}, Dobrinen \cite{Dobrinen2003} (who notes certain limitations when full completeness is not assumed), and others:

\begin{lemma}\label{foreman-generalization} Suppose that $\B$ is a complete Boolean algebra and let $\gamma$ be an ordinal and let $\mu$ be a cardinal. Then the following are equivalent:
\begin{enumerate}
\item $\gcut$ does not have a winning strategy in $\G_{<\gamma}^\mu(\B)$,
\item $\B$ is strongly $(<\!\gamma,\mu)$-distributive. 
\end{enumerate}\end{lemma}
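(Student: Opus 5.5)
The direction (1)$\Rightarrow$(2) is the contrapositive of \autoref{3par-dist-ws} with $\nu=2$. If $\B$ is not strongly $(<\!\gamma,\mu)$-distributive, then $\gcut$ has a winning strategy in $\G^{\mu,<2}_{<\gamma}(\B)=\G^\mu_{<\gamma}(\B)$. So the content lies in (2)$\Rightarrow$(1).

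For that direction, argue contrapositively. Suppose $\sigma$ is a winning strategy for $\gcut$ in $\G^\mu_{<\gamma}(\B)$. By \autoref{two-games}, we may assume that $\gcut$ always partitions $1_\B$. Let $\bar{b}$ be $\sigma$'s opening move. A play against $\sigma$ is then determined by $\gchoose$'s choices, that is, by a sequence $f\in{}^{<\gamma}\mu$ of selected indices.

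The aim is to produce a single sequence of partitions $\seq{\{d^\xi_i:i<\mu\}}{\xi<\gamma}$ witnessing the failure of strong distributivity below $\bar{b}$. The natural choice is to amalgamate $\sigma$'s responses across the tree of plays. By recursion on $\xi$, define for each $\xi$ the set $T_\xi$ of partial plays $f\in{}^{\xi}\mu$ that are still alive, meaning that $a_f:=\bar{b}\wedge\bigwedge_{\zeta<\xi}b^\zeta_{f(\zeta)}\neq 0_\B$, where $\{b^\zeta_i\}$ is $\sigma$'s move along $f$. The elements $a_f$ for $f\in T_\xi$ are pairwise disjoint, since distinct plays diverge at some stage and take disjoint pieces there. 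Then set
\[
d^\xi_i:=\bigvee_{f\in T_\xi}\bigl(a_f\wedge b^{\xi}_{i}(f)\bigr),
\]
where $b^\xi_i(f)$ denotes the $i$'th piece of $\sigma$'s partition after $f$. Join in $\sim\bar{b}$ and whatever remains of $\bar{b}$ outside $\bigvee_{f\in T_\xi}a_f$ into $d^\xi_0$, so that the family is a partition of $1_\B$ (this uses completeness). It is compatible with $\bar{b}$.

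Now suppose $\seq{c_\xi}{\xi<\gamma}$ is a decreasing sequence below $\bar{b}$ with each $c_\xi$ meeting exactly one $d^\xi_{i_\xi}$; equivalently $c_\xi\le d^\xi_{i_\xi}$. By induction on $\xi$, show that $c_\xi$ determines a unique live $f_\xi\in T_{\xi}$ with $c_\xi\le a_{f_\xi}$, and that $f_{\xi+1}=f_\xi{}^\frown i_\xi$. This is the main obstacle. At successor steps, $c_{\xi}\le d^\xi_{i_\xi}\wedge a_{f_\xi}$ forces $c_\xi\le b^\xi_{i_\xi}(f_\xi)$. At limits, $c_\xi\le c_\zeta\le a_{f_\zeta}$ for all $\zeta<\xi$ gives $c_\xi\le a_f$ for $f=\bigcup_{\zeta<\xi}f_\zeta$. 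Hence $a_f\ne0$, so $f$ is live and is a legal play according to $\sigma$, with $\sigma$ continuing. The delicate point is handling the leftover part $\bar{b}\wedge\sim\bigvee_{f\in T_\xi}a_f$ at limit stages. A decreasing $c$ below $\bar{b}$ that hits a single piece at every earlier stage cannot lie in that leftover, by the argument just given, but this must be checked carefully. Once it is, the limit play $\bigcup_\xi f_\xi$ is a run of $\G^\mu_{<\gamma}(\B)$ according to $\sigma$ in which every stage infimum is at least $c_\xi\neq0_\B$. So $\gchoose$ wins this run, contradicting that $\sigma$ is winning. Therefore $\B$ is not strongly $(<\!\gamma,\mu)$-distributive.
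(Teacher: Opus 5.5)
Your proposal is correct and follows essentially the paper's route: you get (1)$\Rightarrow$(2) from \autoref{3par-dist-ws} with $\nu=2$, and for (2)$\Rightarrow$(1) you amalgamate $\sigma$'s partitions over the live nodes at each level of the tree of plays, apply strong distributivity to the resulting sequence of partitions, and read off a branch along which $\gchoose$ survives. Folding the leftover into $d^\xi_0$, together with your induction $c_\xi \le a_{f_\xi}$ (which already shows $c_\xi$ misses the leftover, so the ``delicate point'' needs no further checking), is slightly cleaner than the paper's appeal to $(|\xi|,\mu)$-distributivity to keep the limit levels of the tree nonempty.
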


\begin{proof} To prove the forward direction, just apply \autoref{3par-dist-ws} for $\nu = 2$.

%suppose that $\B$ is not strongly $(<\!\gamma,\mu)$-distributive. Then let $\seq{\{b_\xi^i : i< \mu\}}{\xi<\gamma}$ be a sequence of partitions compatible with some $\bar{b}$ witnessing this. Then $\gcut$ wins precisely by playing $\bar{b}$, and then $\{b_\xi^i : i < \mu\}$ at the $\xi$'th step. Suppose for contradiction that $\gchoose$ wins the play by selecting $b_\xi^{i_\xi}$ at every stage: Then for all $\xi<\gamma$, let $c_\xi := \bar{b} \wedge \bigwedge_{\zeta<\xi}b_\xi^{i\xi} \ne 0_\B$; then the sequence $\seq{c_\xi}{\xi<\gamma}$, which is necessarily $\le_\B$-decreasing, contradicts the fact that $\bar{b}$ and $\seq{\{b_\xi^i : i<\mu \}}{\xi<\gamma}$ together witness the failure of strong $(<\!\gamma,\mu)$-distributivity.

For the other direction, suppose that $\gcut$ has a winning strategy $\sigma$ with first move $\bar{b}$, and suppose for contradiction that $\B$ is $(<\!\gamma,\mu,<\nu)$-distributive. We construct a tree $T \subseteq {}^{<\gamma}\mu$ of height $\gamma$ and an assignment $t \mapsto \{b^i_t : i < \mu\}$ such that:

\begin{itemize}[$\circ$]
\item if $u = t {}^\frown \langle j \rangle$, then $\{b_i^u : i < \mu\} \in \prt_\mu(b^t_j)$,
\item letting $\vec{s}_t$ denote the partial play of the game of length $\dom(t) \cdot 2 +1$, $\{b_i^t:i<\mu\}$ is obtained by applying $\sigma$ to $\vec{s}$.
\end{itemize}

We define $T$ be induction on levels. Let $\emptyset \in T$, and if $t \in T$, let $\{t {}^\frown \langle i \rangle \mid i < \mu \} \subseteq T$. Now suppose $\xi$ is a limit and we have defined $T_\zeta$ for $\zeta<\xi$. We let $T_\xi$ be the set of $t$ cofinal in $\bigcup_{\zeta<\xi}T_\xi$ such that
\[
\bigwedge_{\zeta<\xi}b_{t \rest \zeta}^{t(\zeta)} \ne 0_\B.
\]
By $(|\xi|,\mu)$-distributivity (we do not technically need strong $(<\!\gamma,2)$-distributivity for this step), $T_\xi$ is non-empty. Lastly, let $T = \bigcup_{\xi<\gamma}T_\xi$.

Having constructed $T$, let
\[
c_\xi^i = \bigvee_{t \in T_\xi} b_i^t
\]
for $\xi<\gamma$ and $i<\mu$. Let $\vec{d}:=\seq{d_\xi}{\xi < \gamma}$ witness strong $(<\!\gamma,\mu)$-distributivity with respect to $\seq{\{c_\xi^i:i<\mu\}}{\xi<\gamma}$ and $\bar{b}$. Then $\vec{d}$ determines a cofinal branch through $T$: By induction on $\xi<\gamma$, build a sequence of nodes $\seq{t_\xi}{\xi<\gamma}$ through $T$ so that if $t_\xi$ has been defined, then we choose $t_{\xi+1}$ so that $t_{\xi+1} = t_\xi {}^\frown \langle i \rangle$ where $d_{\xi+1} \le_\B c_i^\xi$. But then the play given by the branch is winning for $\gchoose$, so we have a contradiction of the assertion that $\sigma$ is a winning strategy for $\gcut$.\end{proof}

\subsection{Simple Instances of Determinacy}

For the sake of completeness, we include a proposition that is probably in the folklore. It allows us to state our results in a bit more generality.

 \begin{proposition}\label{shortbad} If $\B$ is a complete Boolean algebra and $\lambda,\mu,\nu$ are cardinals such that $|\B| \le \lambda$, then $\G^{\mu,<\nu}_{\lambda}(\B)$ is determined.\end{proposition}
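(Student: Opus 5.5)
The plan is a case split according to whether the atoms of $\B$ are dense, producing an explicit winning strategy for one player in each case. No distributivity lemma is needed here: the hypothesis $|\B| \le \lambda$ should let $\gcut$ run through all of $\B$ within the $\lambda$ rounds. I expect the case $\nu = 2$ to be straightforward and the case $\nu > 2$ to be the main obstacle.

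\emph{Case 1: the atoms of $\B$ are dense in $\B^+$.} Whatever opening move $\bar b$ $\gcut$ makes, $\gchoose$ first fixes an atom $a \le_\B \bar b$. At every round $\xi \le \lambda$, $\gchoose$ plays $B_\xi = \{i\}$, where $i$ is the unique index with $a \wedge b_i^\xi \neq 0_\B$. Such an $i$ exists because $\{b_i^\xi : i<\mu\}$ is a partition of an element above $a$ (inductively), and it is unique because $a$ is an atom, so $a \le_\B b_i^\xi$. Then every partial meet $\bar b \wedge \bigwedge_{\zeta<\xi}\bigvee_{i\in B_\zeta} b_i^\zeta$, including the final one at stage $\lambda+1$, lies above $a$ and is therefore nonzero. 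This holds for every $\nu \ge 2$, since a singleton is in $[\mu]^{<\nu}$, so $\gchoose$ wins.

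\emph{Case 2: some $\bar b \in \B^+$ has no atom below it,} so $\B \restriction \bar b$ is atomless. $\gcut$ opens with $\bar b$ and fixes an enumeration $\seq{d_\xi}{\xi<\lambda}$ of $\B$, possible since $|\B| \le \lambda$. At round $\xi$, with current meet $c$, $\gcut$ plays a partition of $c$ into $\mu$ nonzero pieces that refines $\{c \wedge d_\xi,\, c \wedge {\sim} d_\xi\}$; when $c$ is atomless this is possible by splitting further if $\mu>2$. Suppose $\nu = 2$ and the final meet $c^*$ after all $\lambda$ rounds and the concluding round were nonzero. Then $c^* \le_\B \bar b$, so $c^*$ is not an atom, and we may write $c^* = e \vee e'$ with $e, e'$ nonzero and disjoint. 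But $e = d_\xi$ for some $\xi$, and at round $\xi$ $\gchoose$'s chosen piece lies below either $d_\xi$ or ${\sim} d_\xi$, contradicting that $c^*$ meets both. So $c^* = 0_\B$, and $\gcut$ wins. This is exactly where the extra concluding round of $\G^{\mu,<\nu}_\lambda$ is used, as opposed to $\G_{<\lambda}$.

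\emph{The main obstacle is $\nu > 2$.} There $\gchoose$ may keep pieces on both sides of $d_\xi$, so the refinement argument above breaks. What I would try first:
\begin{itemize}[$\circ$]
\item If $\nu > \mu$, $\gchoose$ wins trivially by keeping every piece.
\item If $\nu \le \mu$, $\gcut$ should dovetail the enumeration. Each element $d$ is revisited on $\lambda$ many rounds, and at those rounds $\gcut$ splits both $c \wedge d$ and $c \wedge {\sim} d$ into $\mu \ge \nu$ pieces, so that the $<\nu$ retained pieces cannot cover either side.
\end{itemize}
The closing argument would then show that a nonzero final meet $c^*$ must fail to be covered by some retained family, again using that $c^*$ is not an atom. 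I expect this covering argument to be the delicate point. If it cannot be pushed through as stated, the fallback is to prove the proposition for $\nu = 2$ (the $\candc$ case that matters later), together with the trivial case $\nu > \mu$.
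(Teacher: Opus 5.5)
For $\nu=2$ your argument is correct and follows the paper's proof. You use the same split on whether atoms are dense and the same atom strategy for $\gchoose$. In the atomless case, your enumeration of $\B$ is exactly what lies behind the paper's appeal to the failure of $\le\lambda$-distributivity and \autoref{foreman-generalization}. One small fix: if $\B$ has no antichain of size $\mu$, you cannot split into $\mu$ \emph{nonzero} pieces, so pad with $0_\B$. However, the proposition is stated for all $2\le\nu\le\mu$. For $\nu>2$ your proposal has a genuine gap, and retreating to $\nu=2$ does not prove the statement. (The paper's own appeal to \autoref{foreman-generalization} likewise only literally covers $\nu=2$.)

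The route you sketch for $\nu>2$ also fails for infinite $\nu$: it is false that $\gcut$ wins as soon as some $\bar b$ has no atoms below it. Take $\B$ atomless and ccc (e.g.\ the Cohen algebra, with $|\B|=2^{\aleph_0}\le\lambda$) and $\mu=\nu=\omega_1$. Each partition $\gcut$ plays has only countably many nonzero pieces, $\gchoose$ keeps all of them, and the running meet stays $\bar b$.

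The missing idea is to split on the density of $E_\nu=\{e\in\B^+ : \B\rest e \text{ has no antichain of size } \nu\}$ instead of the atoms.
\begin{itemize}
\item If $E_\nu$ is dense, $\gchoose$ fixes $e\le\bar b$ in $E_\nu$ and always keeps $B_\xi=\{i : b_i^\xi\wedge e\ne 0_\B\}$. This set has size $<\nu$ and satisfies $e\le\bigvee_{i\in B_\xi}b_i^\xi$, so every meet stays above $e$. Your atom strategy is the case $\nu=2$.
\item Otherwise $\gcut$ opens with some $\bar b$ having no element of $E_\nu$ below it and enumerates $\B^+\rest\bar b$ as $\seq{d_\xi}{\xi<\lambda}$. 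At round $\xi$, if $d_\xi\le c_\xi$ (the current meet), $\gcut$ picks an antichain $\{a_i : i<\nu\}$ below $d_\xi$ and plays $\{a_i : 0<i<\nu\}\cup\{a_0\vee(c_\xi\wedge{\sim}\bigvee_{i<\nu}a_i)\}$, padded with $0_\B$. Any $<\nu$ of these pieces omit one containing some $a_i$, so $d_\xi\not\le c_{\xi+1}$.
\end{itemize}
To close the second case: if the final meet $c^*$ were nonzero, then $c^*=d_\xi$ for some $\xi$ with $d_\xi\le c_\xi$, contradicting $c^*\le c_{\xi+1}$. So no dovetailing is needed. What matters about $c^*$ is not that it is a non-atom, but that it is itself enumerated and has an antichain of size $\nu$ below it.

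For finite $\nu$, $E_\nu$ is dense iff the atoms are, since an element with no antichain of size $\nu$ below it bounds a finite algebra. So for finite $\nu$ your case split is right, and only this closing argument was missing.
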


\begin{proof}[Proof of \autoref{shortbad}] If the set of atoms is not dense in $\B$, then $\gcut$ has a winning strategy. They open with some $b \in \B$ below which there are no atoms. Then, since $|\B \rest b| \le \lambda$, it follows that $\B$ not $\le \lambda$-distributive, so the statement follows from \autoref{foreman-generalization}. If the set of atoms is dense in $\B$, then $\gchoose$ has a winning strategy: If cut opens with $\bar{b}$, then fix some atom $a \le_\B \bar{b}$. Then for every partition $\{b_i : i< \mu\}$ that $\gcut$ plays, there is a unique $i<\mu$ such that $a \le_\B b_i$, so $\gchoose$ selects a set containing whichever element is above $a$.\end{proof}

Let us clarify a definition that might normally be taken for granted. 

\begin{definition} Let $\B$ be a $<\lambda$-complete Boolean algebra.

\begin{enumerate}

\item We say that $\mathcal{F} \subseteq \mathcal{P}(\B)$ is an \emph{ultrafilter} if the following hold: for all $b \in \B$, either $b \in \mathcal{F}$ or $\sim b \in \mathcal{F}$; for all $b,c \in \mathcal{F}$, $b \wedge c \in \mathcal{F}$; $b \in \mathcal{F}$ and $c \ge_\B b$ implies $c \in \mathcal{F}$; and $0_\B \notin \mathcal{F}$.

\item Let $\mathcal{F}$ be an ultrafilter on a complete Boolean algebra $\B$ and let $\lambda$ be a regular cardinal. We say that $\mathcal{F}$ is $<\lambda$-complete if for all $\tau<\lambda$ and $\seq{b_\xi}{\xi<\tau} \subseteq \mathcal{F}$, we have $\bigwedge_{\xi<\tau}b_\xi \in \mathcal{F}$.\end{enumerate}\end{definition}

\begin{proposition}\label{easy-filter} Suppose that $\B$ is a complete Boolean algebra, $\lambda$ is a regular cardinal, and that for all $\bar{b} \in \B^+$ there is a $<\!\lambda$-complete ultrafilter $\mathcal{F}$ on $\B$ with $\bar{b} \in \F$. Then it follows that for all $\nu \le \mu < \lambda$, $\gchoose$ has a winning strategy in the game $\G_{<\lambda}^{\mu,<\nu}(\B)$.\end{proposition}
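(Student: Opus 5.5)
The idea is to let $\gchoose$ follow a single $<\!\lambda$-complete ultrafilter containing the opening move. Once $\gcut$ opens with $\bar{b} \in \B^+$, $\gchoose$ fixes a $<\!\lambda$-complete ultrafilter $\F$ on $\B$ with $\bar{b} \in \F$, as the hypothesis provides. Throughout, $\gchoose$ will maintain the invariant that
\[
c_\eta := \bar{b} \wedge \bigwedge_{\xi<\eta} \bigvee_{i \in B_\xi} b_i^\xi \in \F
\]
at every stage $\eta$. Since $0_\B \notin \F$, this gives $c_\eta \ne 0_\B$, which is exactly the winning condition for $\gchoose$ at each stage. The strategy is: at round $\xi$, when $\gcut$ plays a partition $\{b_i^\xi : i < \mu\}$, $\gchoose$ plays $B_\xi = \{i\}$ for some $i$ with $b_i^\xi \wedge c_\xi \in \F$. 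This is legal, since $|B_\xi| = 1 < \nu$ when $\nu \ge 2$.

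The key step is showing that such an $i$ always exists. The partition has $\mu < \lambda$ pieces, and by Fact~\ref{two-games} we may assume it is a partition of $1_\B$; if instead it partitions $c_\xi$, replace each piece by its join with $\sim c_\xi$ or argue directly. Suppose no piece $b_i^\xi$ lies in $\F$. Then every $\sim b_i^\xi \in \F$, and by $<\!\lambda$-completeness
\[
\bigwedge_{i<\mu} \sim b_i^\xi = \sim \bigvee_{i<\mu} b_i^\xi = \sim 1_\B = 0_\B \in \F,
\]
which is a contradiction. So some $b_i^\xi \in \F$, and closure under finite meets gives $c_\xi \wedge b_i^\xi \in \F$. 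In the original formulation, where $\gcut$ partitions $c_\xi$, the same argument applied to the pieces together with $\sim c_\xi$ (which is not in $\F$) produces some $b_i^\xi \in \F$.

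At limit stages $\eta < \lambda$, $c_\eta$ is the meet of $\eta$-many members of $\F$. Since $\lambda$ is regular, $|\eta| < \lambda$, so $c_\eta \in \F$ by $<\!\lambda$-completeness. This keeps the invariant through the whole play. Because the game $\G^{\mu,<\nu}_{<\lambda}$ has no $\lambda$'th round, every stage is some $\eta < \lambda$, and so $\gchoose$ never loses.

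The only delicate points are using $\mu < \lambda$ in the completeness step, which is where that hypothesis enters, and using regularity of $\lambda$ at limit stages. I expect no real obstacle beyond checking these. If $\nu \le \mu$ is allowed to equal $1$ there would be an issue, but the definition forces $\nu \ge 2$.
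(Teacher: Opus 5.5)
Your proposal is correct and is essentially the paper's proof: fix a $<\!\lambda$-complete ultrafilter $\F$ containing the opening move, let $\gchoose$ pick the piece of each partition that lies in $\F$ (it exists because $\mu<\lambda$), and use $<\!\lambda$-completeness to keep the meet of the selections in $\F$, hence nonzero. The paper only treats partitions of $1_\B$. Your extra step of adjoining the complement of the current element, so as to handle $\gcut$ partitioning the current element rather than $1_\B$, is a detail the paper leaves implicit, and you handle it correctly.
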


\begin{proof} First, note that as with any $<\!\lambda$-complete ultrafilter $\F$ in other settings, if $\{b_i : i < \mu\}$ is a partition of $\B$, then there is some unique $i<\mu$ such that $b_i \in \F$. 

We define a winning strategy $\sigma$ for $\gchoose$ with respect to an opening move $\bar{b}$ played by $\gcut$. Let $\mathcal{F}$ be a $<\!\lambda$-complete ultrafilter such that $\bar{b} \in \mathcal{F}$.

Suppose $\vec{s}$ is a partial play at the $\xi$'th round where it is $\gchoose$'s turn and the last move of $\gcut$ is $\{b^\xi_i : i < \mu\}$. Then $\gchoose$ selects a set containing $i_\xi$ where $b^\xi_{i_\xi} \in \mathcal{F}$. We argue that $\sigma$ is well-defined and is a winning strategy by observing that for all partial plays $\vec{s}$, if $\{b_{i_\xi}^\xi \mid \xi<\eta\}$ is the set of $\gchoose$'s choices, then $b_* := \bigwedge_{\xi<\eta}b_{i_\xi}^\xi$ is in $\mathcal{F}$ and therefore is not equal to $0_\B$. Therefore the play can continue.\end{proof}
 
Now we can obtain some simple determinacy results for $\G_{<\lambda}^\mu(\B)$.

\begin{proposition}\label{easy-determinacy} If $\lambda$ is a regular cardinal and $\B$ is a complete Boolean algebra of cardinality $\le \lambda$, then $\G_{<\lambda}^{\mu}(\B)$ is determined for all $ \mu<\lambda$.\end{proposition}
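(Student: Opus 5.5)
The plan is a dichotomy. By \autoref{foreman-generalization} (with $\gamma=\lambda$), either $\gcut$ has a winning strategy in $\G^{\mu}_{<\lambda}(\B)$, and we are done, or $\B$ is strongly $(<\!\lambda,\mu)$-distributive. In the second case I will produce, for every $\bar b\in\B^+$, a $<\!\lambda$-complete ultrafilter $\F$ on $\B$ with $\bar b\in\F$. Then \autoref{easy-filter}, applied with $\nu=2\le\mu<\lambda$, gives $\gchoose$ a winning strategy, so the game is determined either way.

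First I reduce strong $(<\!\lambda,\mu)$-distributivity to the case of partitions of size $2$. Given two-element partitions $\{a,\sim a\}$, I pad each one to a $\mu$-sequence. Either I allow $0_\B$ entries, which are never compatible with anything, or I split off further pieces in some harmless way; a decreasing sequence compatible with at most one piece of the padded partition is then compatible with at most one of $a,\sim a$. I expect this bookkeeping to be routine, but I will check that the paper's conventions on partitions allow it.

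Now fix $\bar b\in\B^+$ and enumerate $\B=\{a_\xi:\xi<\lambda\}$, which is possible since $|\B|\le\lambda$. Apply strong distributivity to $\bar b$ and the partitions $\{a_\xi,\sim a_\xi\}$. This gives a $\le_\B$-decreasing sequence $\seq{c_\xi}{\xi<\lambda}$ of nonzero elements below $\bar b$ with, for each $\xi$, either $c_\xi\le_\B a_\xi$ or $c_\xi\le_\B{\sim a_\xi}$. Here I use that $c_\xi\ne 0_\B$ is compatible with exactly one piece, and that by completeness "incompatible with $\sim a$" means "below $a$". Define
\[
\F:=\{a\in\B \mid \exists \xi<\lambda\ (c_\xi\le_\B a)\}.
\]
Since the $c_\xi$ are nonzero, decreasing and below $\bar b$, $\F$ is upward closed, closed under finite meets, omits $0_\B$, contains $\bar b$, and contains $a$ or $\sim a$ for every $a$. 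So $\F$ is an ultrafilter.

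The main point to verify is $<\!\lambda$-completeness, and here regularity of $\lambda$ enters. Let $\tau<\lambda$ and $\seq{b_\zeta}{\zeta<\tau}\subseteq\F$, with witnesses $c_{\xi_\zeta}\le_\B b_\zeta$. By regularity, $\delta:=\sup_\zeta\xi_\zeta<\lambda$, and $c_\delta\le_\B b_\zeta$ for all $\zeta<\tau$. Hence $c_\delta\le_\B\bigwedge_{\zeta<\tau}b_\zeta$, where the meet exists because $\B$ is complete, so the meet lies in $\F$. Note that this works without appealing to the enumeration index of the meet: being below all $c_{\xi_\zeta}$'s targets already suffices.

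I expect the only delicate points to be two. One is the padding step relating the $\mu$-parameter to two-piece partitions. The other is matching the hypotheses exactly: the ultrafilter hypothesis of \autoref{easy-filter}, and $\gamma=\lambda$ in \autoref{foreman-generalization}, including the treatment of the opening move $\bar b$.
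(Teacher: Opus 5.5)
Your proposal is correct and follows the paper's proof. Both arguments use \autoref{foreman-generalization} to obtain, when $\gcut$ has no winning strategy, a decreasing sequence $\seq{c_\xi}{\xi<\lambda}$ deciding every binary partition, take the filter it generates, check via regularity of $\lambda$ that it is a $<\!\lambda$-complete ultrafilter containing $\bar b$, and then invoke \autoref{easy-filter}. The only differences are bookkeeping: you enumerate $\B$ with repetitions instead of first reducing to $|\B|=\lambda$ via \autoref{shortbad}, and you make explicit the passage from $\mu$-partitions to binary partitions, which the paper leaves implicit (it also follows from the monotonicity in \autoref{easy-cnc}).
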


\begin{proof} \autoref{shortbad} tells us to only consider $\B$ of cardinality $\lambda$. Fix some $\bar{b} \in \B$ and let $\seq{\{b_\xi,\sim b_\xi\} }{\xi<\lambda}$ enumerate the binary partitions of $\B$. If $\gcut$ has does not have a winning strategy, then \autoref{foreman-generalization} implies that there is a $\le_\B$-decreasing sequence $\seq{c_\xi}{\xi<\lambda}$ through $\B$ such that for all $\xi<\lambda$, there is a unique $\epsilon \in \{0,1\}$ such that $c_\xi \le b^\xi_\epsilon$ where $b^\xi_0 = b_\xi$ and $b^\xi_1 = \sim b_\xi$. Then it is clear that
\[
\mathcal{F}_{\bar b} := \{ b \in \B \mid \exists \xi<\lambda, c_\xi \le_\B b \}
\]
is a $<\!\lambda$-complete ultrafilter on $\B$ containing $\bar{b}$, so the statement follows from \autoref{easy-filter}.\end{proof}

%This is a good moment to sketch some possible extensions of \autoref{zapletal-answer}. If we remove the forcing from \autoref{zapletal-answer}, we essentially obtain the following theorem.

\begin{theorem}\label{from-a-sc-only} If $\kappa$ is a supercompact cardinal and $\lambda \ge \kappa > \mu$ where $\lambda$ is regular, then $\G^{\mu}_{<\lambda}(\B)$ is determined for all complete Boolean algebras $\B$.\end{theorem}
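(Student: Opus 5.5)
The plan is to split on whether $\gcut$ has a winning strategy. If it does, there is nothing to prove. So assume $\gcut$ has no winning strategy in $\G^{\mu}_{<\lambda}(\B)$; the goal is then to build a winning strategy for $\gchoose$. By \autoref{foreman-generalization}, our assumption gives that $\B$ is strongly $(<\!\lambda,\mu)$-distributive. In particular, below any $\bar b$, every $<\!\lambda$-sequence of $\mu$-partitions is decided by some $\le_\B$-decreasing sequence of nonzero conditions. The idea is to replace the $<\!\lambda$-complete ultrafilter of \autoref{easy-filter}, which $\B$ need not carry, by an ultrafilter-like object obtained from a supercompactness embedding. In effect, supercompactness should substitute for the cardinality hypothesis $|\B|\le\lambda$ used in \autoref{easy-determinacy}.

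Concretely, fix the opening move $\bar b$. Let $\theta$ be a regular cardinal much larger than $2^{|\B|}$ and $\lambda$. Take $j:V\to M$ with critical point $\kappa$, $j(\kappa)>\theta$, and ${}^{\theta}M\subseteq M$. Then $j[\B]\in M$, and for every partition $P\in\prt_\mu(\B)$ we have $j(P)=j[P]$, since $\mu<\kappa$. By elementarity, $j(\B)$ is strongly $(<\!j(\lambda),j(\mu))$-distributive in $M$. Enumerate in $M$ the set $\{j(P) \mid P\in\prt_\mu(\B)\}$ in order type $\delta$, where $\delta\le 2^{|\B|}<j(\lambda)$. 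Applying strong distributivity below $j(\bar b)$ yields a decreasing sequence $\vec c=\seq{c_\alpha}{\alpha<\delta}$ in $j(\B)^+$ that decides every $j(P)$.

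$\gchoose$'s strategy is then: on cut's move $P=\{b_i:i<\mu\}$, choose the unique $i$ with $j(b_i)\ge c_\alpha$ for some $\alpha$. Successor rounds are automatic, exactly as in \autoref{easy-filter}.

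The main obstacle is the limit stages $\eta<\lambda$. One must show that $\bigwedge_{\zeta<\eta} b^{\zeta}_{i_\zeta}\ne 0_\B$. The natural computation is
\[
j\Bigl(\bigwedge_{\zeta<\eta} b^{\zeta}_{i_\zeta}\Bigr)=\bigwedge_{\zeta<j(\eta)} j(\vec b)^{\zeta}_{j(\vec i)(\zeta)}.
\]
This involves $j(\eta)$ many coordinates, not just the $\eta$ many in $j[\eta]$, and the sequence $\vec c$ only controls the latter. This is also why the cheap route through a plain ultrafilter only gives $<\!\kappa$-completeness.

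My first attempt at this step is to make the deciding sequence self-referential. Build $\vec c$ not against a fixed list of partitions, but as $\gchoose$'s reply in $M$ to the play $j(\vec s)$ of the game $j(\G^\mu_{<\lambda}(\B))$. Closure of $M$ under $\theta$-sequences puts $j[\vec s]$ and all relevant partial plays in $M$. Strong distributivity of $j(\B)$ in $M$, together with the failure of a winning strategy for $\gcut$ there, lets $\gchoose$ survive in $M$ along the full $j(\eta)$-length play, and elementarity then pulls nonzero meets back to $V$.

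If this self-referential version is awkward, my fallback is a reflection argument. By supercompactness and elementarity, a failure of $\gchoose$ to have a winning strategy should reflect to a $<\!\kappa$-sized subalgebra, or to a game on a $\lambda$-sized elementary piece of $\B$. There \autoref{easy-determinacy} and \autoref{easy-filter} already give determinacy, and the result then lifts back to $\B$.
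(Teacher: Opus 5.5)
\textbf{What works.} Up to the limit-stage issue, your construction is the paper's. The paper takes a single $b_{\textup{gen}}\le j(\bar b)$ in $M$ deciding $j(b)$ for every $b\in\B$, using $(\tau,2)$-distributivity of $j(\B)$ in $M$ for $\tau<j(\kappa)$. It then sets $\F=\{c\in\B : b_{\textup{gen}}\le_{j(\B)} j(c)\}$ and applies \autoref{easy-filter}.

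Your sequence $\vec c$ does the same job, with one small repair. You should take a single element below all of $\vec c$, for instance by applying strong distributivity to a sequence of length $\delta+1$. Otherwise $\sup_{\zeta<\eta}\alpha_\zeta$ can equal $\delta$ when $\cf^M(\delta)\le\eta$. With that change you get a $<\!\kappa$-complete ultrafilter, and $\gchoose$ survives every round below $\kappa$. This settles the case $\lambda=\kappa$, and you could have written that case out in full.

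\textbf{The gap.} The gap is the one you flag yourself, and neither of your fixes closes it. When $\lambda>\kappa$, the problem is the limits $\eta<\lambda$ of cofinality $\ge\kappa$, starting with $\eta=\kappa$. Limits of cofinality $<\kappa$ can still be handled inductively by $<\!\kappa$-completeness.

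\emph{The self-referential version is circular.} $j(\vec s)$ is the image of the $V$-run, so $\gchoose$'s entries at coordinates in $j(\eta)\setminus j[\eta]$ are already coordinates of $j(\seq{i_\zeta}{\zeta<\eta})$. Nothing is left for $\gchoose$ to choose in $M$. Moreover, $j\bigl(\bigwedge_{\zeta<\eta}b^{\zeta}_{i_\zeta}\bigr)\neq 0_{j(\B)}$ holds if and only if $\bigwedge_{\zeta<\eta}b^{\zeta}_{i_\zeta}\neq 0_\B$, so ``pulling back by elementarity'' gives nothing new. There are two further problems:
\begin{itemize}
\item A strategy must answer at round $\zeta$ using only the current position, whereas $j(\vec s)$ depends on the entire run.
\item That $\gcut$ has no winning strategy in $M$ only says each $\gcut$-strategy is beaten by \emph{some} run. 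It does not say that a particular, adaptively produced run survives.
\end{itemize}

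\emph{The reflection fallback is not yet an argument.} A winning strategy for $\gchoose$ on a subalgebra, or on $N\cap\B$, does not transfer to $\B$. $\gcut$ can play partitions outside the substructure, and meets computed there need not be meets in $\B$. Also, $\lambda\ge\kappa$ is moved by $j$, so reflecting through $j$ changes the length of the game, which is exactly the difficulty.

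\textbf{Comparison with the paper.} The paper's proof does not supply the missing idea either. It announces a $<\!\lambda$-complete ultrafilter but verifies completeness only for $\tau<\kappa$; its first sentence even reduces to $\G^{\mu}_{<\kappa}(\B)$. So as written it also establishes only $\lambda=\kappa$. In \autoref{zapletal-actually} this obstruction is avoided by keeping $\lambda$ below the critical point.

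Your diagnosis is therefore accurate, but your proposal proves only $\lambda=\kappa$. For $\lambda>\kappa$ you would need a genuinely new way to handle limits of cofinality in $[\kappa,\lambda)$, and an embedding with critical point $\kappa$ used as above does not provide it.
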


\begin{proof} The forward direction of \autoref{foreman-generalization} reduces the problem to arguing that if $\B$ is a strongly $(<\!\kappa,\mu)$-distributive Boolean algebra of cardinality $\kappa$, then $\gchoose$ has a winning strategy in $\G^{\mu}_{<\kappa}(\B)$. Fix a $\lambda $-supercompact embedding $j:V \to M$ with critical point $\kappa$.

Fix an arbitrary $\bar{b} \in \B$. Then we can argue that we can define a $<\!\lambda$-complete $\B$-ultrafilter $\F$, from which \autoref{easy-filter} gives us a winning strategy for $\gchoose$

In order to define this $\F$, let $\vec{b}=\seq{\{b_\xi,\sim b_\xi\}}{\xi<\lambda}$ enumerate the binary partitions of $\B$ compatible with $\bar{b}$. By elementarity, $M \models \bq j(\B)$ is $(<\!j(\kappa),\mu)$-distributive$\eq$, and in particular $M \models \bq j(\B)$ is $(<j(\kappa),2)$-distributive$\eq$, so given that $\seq{\{j(b_\xi),\sim j(b_\xi)\}}{\xi<\lambda}$ is a sequence of partitions of $j(\B)$ in $M$,  there is some $b_{\textup{gen}} \in M \cap j(\B)$ such that for all $\xi<\lambda$, either $b_{\textup{gen}} \le_{j(\B)} j(b_\xi)$ or $b_\textup{gen} \le_{j(\B)} (\sim j(b_\xi))$. 

%Let $\vec{b}=\seq{\{b^\xi_i : i<\mu \}}{\xi<\lambda}$ enumerate the $\mu$-sized partitions of $\B$. By elementarity, $M \models \bq j(\B)$ is $(<\!j(\kappa),\mu)$-distributive$\eq$, and in particular $M \models \bq j(\B)$ is $(\kappa,\mu)$-distributive$\eq$, so given that $\seq{\{j(b_i^\xi):i<\mu\}}{\xi<\lambda}$ is a sequence of partitions of $j(\B)$ in $M$,  there is some $b_{\textup{gen}} \in M \cap j(\B)$ such that for all $\xi<\lambda$, there is a unique $i<\mu$ such that $b_{\textup{gen}} \le_{j(\B)} j(b_i^\xi)$. 

Then we can argue that we can define a $<\!\lambda$-complete $\B$-ultrafilter $\F$ from $b_\textup{gen}$, from which \autoref{easy-filter} gives us a winning strategy for $\gchoose$: We let
\[
\F = \{ c \in \B : b_\textup{gen} \le_{j(\B)} c \}.
\]
It is immediate that $\F$ is a $\B$-ultrafilter such that $\bar{b} \in \mathcal{F}$. Suppose that $\{c_i : i<\tau \} \subseteq \F$ for some $\tau<\kappa$. Then we have
\[
0_{j(\B)} \ne b_\textup{gen} \le_{j(\B)} \bigwedge_{i<\tau}j(b_i) = j \left( \bigwedge_{i<\tau}b_i \right)
\]
so by elementarity it is the case that $\bigwedge_{i<\tau}b_i $ is nonzero.\end{proof}

\subsection{Lifting Embeddings and a Question of Zapletal}

Zapletal proved that, assuming the consistency of a supercompact cardinal, there is a model of set theory in which $\gchoose$ has a winning strategy in $\G^{\candc}_\omega(\B)$ for every $\aleph_1$-distributive Boolean algebra $\B$.

We want to obtain an improvement:

\begin{theorem}\label{zapletal-answer} Suppose $\kappa$ is a supercompact cardinal. Then if $G$ is $\Col(\omega_1,<\!\kappa)$-generic over $V$, then in $V[G]$, for every Boolean algebra $\B$ that does not add new subsets of $\omega_1$, the player $\gchoose$ has a winning strategy in $\G_\omega^\candc(\B)$.\end{theorem}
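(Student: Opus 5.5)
The plan is to exhibit, below any opening move $\bar b$, a complete embedding of $\B\rest\bar b$ into a $\sigma$-closed forcing, and to let $\gchoose$ play by walking down that forcing. Concretely, I will first prove a general criterion in $V[G]$. Suppose there are a $\sigma$-closed poset $\mathbb R$ and a map $\pi:\mathbb R\to(\B\rest\bar b)^+$ such that $\pi$ is order preserving, and for every $r\in\mathbb R$ and every $b\in\B$ there is $r'\le r$ with $\pi(r')\le b$ or $\pi(r')\le\sim b$. Suppose moreover that whenever $\seq{r_n}{n<\omega}$ is decreasing with lower bound $r_\omega$, we have $\pi(r_\omega)\le\bigwedge_n\pi(r_n)$. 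Then $\gchoose$ wins $\G^\candc_\omega(\B)$ below $\bar b$. The strategy keeps a descending sequence $r_n$ with $\pi(r_n)\le b^{\epsilon_0}_0\wedge\dots\wedge b^{\epsilon_{n-1}}_{n-1}$. When $\gchoose$ faces a partition $\{b^0_n,b^1_n\}$, she extends $r_n$ to $r_{n+1}$ deciding it, and chooses the side $b^{\epsilon_n}_n$ above $\pi(r_{n+1})$. At the end, $\sigma$-closure gives $r_\omega$, and $0_\B\ne\pi(r_\omega)\le\bar b\wedge\bigwedge_n b^{\epsilon_n}_n$. This part is routine, and it is the analogue of \autoref{easy-filter} with a generic object replacing the ultrafilter.

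Second, I reduce to a small algebra via reflection. Suppose $\B\in V[G]$ does not add subsets of $\omega_1$ and $\gchoose$ has no winning strategy. Take a $\theta$-supercompact $j:V\to M$ for $\theta$ large, with critical point $\kappa$. Then $j(\Col(\omega_1,<\!\kappa))\cong\Col(\omega_1,<\!\kappa)\times\Col(\omega_1,[\kappa,j(\kappa)))$. Let $H$ be generic for the tail, and lift $j$ to $j:V[G]\to M[G][H]$. Because the tail is $\sigma$-closed, it adds no countable sequences. Hence $\B$ still adds no new subsets of $\omega_1$ in $M[G]$, and $\gchoose$ still lacks a strategy there. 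The point of this step is that in $M[G]$ the algebra $\B$ has size $<j(\kappa)$, with $j(\kappa)$ inaccessible. By elementarity, it then suffices to handle algebras $\B'\in V[G_\alpha]$ with $\alpha<\kappa$, $|\B'|<\kappa$, and with $\Col(\omega_1,[\alpha,\kappa))$ as the remaining part of the collapse.

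Third, and this is the step I expect to be the main obstacle, I need an absorption lemma. The goal is to show that $\B'\ast\dot{\Col}(\omega_1,\delta)$, for suitable $\delta\ge 2^{|\B'|}$, has a $\sigma$-closed dense subset. Then $\Col(\omega_1,\delta)$ completely embeds $\B'$ via the standard absorption (McAloon/Solovay) argument, and taking $\mathbb R$ to be that dense subset, with $\pi$ its projection to $\B'$, feeds the first step. Mere $\omega_1$-distributivity of $\B'$ does not obviously yield such a dense set. My first attempt is to use the lifted embedding directly. In $M[G][H]$, apply the tail-closure to $j``\B$: because $H$ is generic for a $\sigma$-closed collapse of $|\B|$ to $\omega_1$, I expect to build, inside $M[G]$, a $\sigma$-closed tree of conditions in $\Col(\omega_1,[\kappa,j(\kappa)))$. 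These conditions would be paired with elements of $\B$, deciding the partitions $\{b,\sim b\}$ one at a time, with countable lower bounds guaranteed because $\B$ adds no new $\omega$-sequences. The $\pi$-condition at limits is exactly where distributivity is used, as in the limit step of the tree construction in the proof of \autoref{foreman-generalization}.

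If the direct construction fails, the fallback is a pressing-down argument. We would use that $j``\B$ has size $<j(\kappa)$ in $M$, and stationarity of the relevant $[\theta]^{<\kappa}$ sets under the supercompactness measure, to find an intermediate model where $\B'$ is absorbed. Elementarity would then transfer the resulting winning strategy for $\gchoose$ back to $V[G]$, contradicting the assumption.
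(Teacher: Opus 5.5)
Your Step 1 is sound; the limit clause on $\pi$ even follows automatically from order preservation. The gap is at Step 3, which carries all the content, and the target you set there is false in general.

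If $\B'\ast\dot{\Col}(\omega_1,\delta)$ had a $\sigma$-closed dense subset, then $\B'$, as a complete subalgebra of it, would add no new $\omega$-sequences of ordinals (indeed it would be $(\omega+1)$-strategically closed). Your reduced class contains algebras for which this fails. Take $\alpha<\delta<\kappa$ with $\delta$ measurable. In $V[G_\alpha]$ the Prikry algebra at $\delta$ has size $<\kappa$ and adds no subsets of $\omega_1$ over $V[G_\alpha]$. That is the only form of the hypothesis elementarity can reflect, since over $M[G][H]$ your $\B$ has size $\omega_1$. Yet this algebra adds a cofinal $\omega$-sequence in $\delta$.

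The same example shows that ``$\B$ adds no new $\omega$-sequences'' does not follow from the hypothesis. Even when it holds, distributivity only gives densely many conditions deciding countably many partitions. It does not give a nonzero lower bound for a descending sequence whose sides $\gcut$ dictates, and that is exactly the difference between distributivity and a winning strategy for $\gchoose$. The pressing-down fallback is not yet an argument.

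The missing idea is to apply elementarity to the distributivity of $\B$, rather than trying to absorb $\B$ into a collapse.
\begin{itemize}
\item Lift to $j:V[G]\to M[G][H]$ with $H$ generic for the $\sigma$-closed tail $\P$.
\item In $M[G][H]$ the set $j[\B]$ has size $\omega_1$, while $j(\B)$ still adds no subsets of $\omega_1$ by elementarity. So there is a single $b_{\textup{gen}}\le j(\bar b)$ deciding $\{j(b),\sim j(b)\}$ for every $b\in\B$. The paper uses a decreasing $\omega_1$-sequence via strong distributivity instead, to the same effect.
\item Then $\F=\{b\in\B : b_{\textup{gen}}\le j(b)\}$ is an ultrafilter containing $\bar b$. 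It is countably complete in $V[G][H]$, because countable subsets of $\B$ there already lie in $V[G]$ and $j$ preserves their meets.
\item So $\gchoose$ wins in $V[G][H]$ by always choosing the side in $\F$ (\autoref{easy-filter}). The strategy pulls back to $V[G]$ by $\sigma$-closure of $\P$ (\autoref{closure-preservation1}).
\end{itemize}

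In your Step 1 language, $\mathbb R=\{(p,b)\in\P\times\B^+ : p\Vdash b\in\dot{\F}\}$ with $\pi(p,b)=b$ is exactly the $\sigma$-closed, order-preserving, deciding map you wanted. It need not be a projection, which is why no absorption lemma is needed, and by the Prikry example none is available.

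Step 2 is then unnecessary. As written it also asserts the failure of a winning strategy in $M[G]$, whereas elementarity needs it in $M[G][H]=M[j(G)]$.
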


This answers Question 2 in Zapletal's paper ``More on the Cut and Choose Game'' \cite{Zapletal1995}. In the same paper, Zapletal obtained a large cardinal lower bound that applies to this theorem \cite[Example 3]{Zapletal1995}.

In fact, we will have an even stronger statement in the form of determinacy for the limit version of the game:

\begin{theorem}\label{zapletal-actually} If $\kappa$ is a supercompact cardinal, $\lambda<\kappa$ is regular, and $G$ is $\Col(\lambda,<\!\kappa)$-generic over $V$, then in $V[G]$ it holds that for every Boolean algebra $\B$, $\G_{<\lambda}^{\mu}(\B)$ is determined for all cardinals $\mu < \lambda$.\end{theorem}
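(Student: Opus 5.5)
The plan is to assume that $\gcut$ has no winning strategy in $\G^\mu_{<\lambda}(\B)$ in $V[G]$, and then build a winning strategy for $\gchoose$. By \autoref{foreman-generalization}, the assumption says exactly that $\B$ is strongly $(<\!\lambda,\mu)$-distributive in $V[G]$. By \autoref{shortbad} and \autoref{easy-determinacy}, if $|\B|\le\lambda$ we are done, so the real work is for large $\B$.

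The idea is to pass to a $\lambda$-closed extension in which $\B$ has size $\lambda$, rerun the argument of \autoref{easy-determinacy} there, and pull the resulting strategy back to $V[G]$. I would proceed in three steps.

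\begin{enumerate}
\item Fix $\bar b\in\B^+$, a large $\theta>|\B|$, and a $\theta$-supercompact embedding $j:V\to M$ with critical point $\kappa$. Then $j(\Col(\lambda,<\!\kappa))\cong\Col(\lambda,<\!\kappa)*\dot{\mathbb{Q}}$, where $\mathbb{Q}=\Col(\lambda,[\kappa,j(\kappa)))$ is $\lambda$-closed in $M[G]$. By $\theta$-closure of $M$ it is also $\lambda$-closed in $V[G]$, and it collapses $|\B|$ to $\lambda$. Let $H$ be $\mathbb{Q}$-generic over $V[G]$ and lift to $j:V[G]\to M[G*H]$.
\item Work in $V[G][H]$. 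Since $\mathbb{Q}$ adds no ${<}\lambda$-sequences, $\B$ is still ${<}\lambda$-complete, and all meets of fewer than $\lambda$ elements are computed as in $V[G]$. Enumerate the binary partitions of $\B$ compatible with $\bar b$ in type $\lambda$. Provided $\B$ is still strongly $(<\!\lambda,2)$-distributive there, we get a decreasing sequence $\seq{c_\xi}{\xi<\lambda}$ deciding every partition. This yields a ${<}\lambda$-complete ultrafilter $\F\ni\bar b$ as in \autoref{easy-determinacy}, and \autoref{easy-filter} then gives $\gchoose$ a winning strategy in $V[G][H]$.
\item Pull this back to $V[G]$, using that $\mathbb{Q}$ is $\lambda$-closed and every play has length ${<}\lambda$. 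All of $\gcut$'s moves, and every partial play, already lie in $V[G]$. Fix a well-order of $\mathbb{Q}$ and a name $\dot\sigma$ for the strategy. Define $\gchoose$'s strategy in $V[G]$ together with a decreasing sequence of conditions $q_\xi$ attached to the partial play. At each turn, take the least extension of the current $q_\xi$ (and of a lower bound of the earlier $q$'s at limit stages) that decides $\dot\sigma$'s answer on the current position, and play that answer. $\lambda$-closure supplies the lower bounds at limits. Those lower bounds also force that the meet of $\gchoose$'s choices is nonzero, which is absolute, so the play never dies.
\end{enumerate}

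The main obstacle is the proviso in step 2. The partition sequences of length $\lambda$ in $V[G][H]$ are mostly new, since $V[G]$ has $|\B|>\lambda$. So strong distributivity in $V[G]$ does not transfer by a simple closure argument. This is where supercompactness must enter; without it the argument would contradict Zapletal's lower bound.

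My first attempt would reflect a counterexample below $\kappa$. Suppose some $q\in\mathbb{Q}$ forces that $\dot{\vec b}$ is a $\lambda$-sequence of partitions compatible with $\bar b$ admitting no decreasing deciding sequence. Apply elementarity of $j$ (with $\B$, the name, and $\mathbb{Q}$ handled through $j$) to obtain, in $V$, some $\alpha<\kappa$ such that $\B\cap V[G\rest\alpha]$, or rather a suitable $V[G\rest\alpha]$-version of $\B$, is already collapsed to $\lambda$ in $V[G\rest(\alpha+1)]$. In that model there is a sequence $\vec b'$ of length $\lambda$ witnessing the failure. That sequence then lies in $V[G]$ and contradicts strong distributivity of $\B$ in $V[G]$. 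Making this reflection precise, namely choosing the right small elementary submodel $N\prec H_\theta^{V[G]}$ with $N\cap\kappa\in\kappa$ and arguing that the witness in $V[G]$ works for the real $\B$, is the step I expect to require the most care. If it proves awkward, the fallback is to run the argument of \autoref{from-a-sc-only} directly in $M[G*H]$. There one would find $b_{\textup{gen}}\le j(\bar b)$ deciding every $j(b_\xi)$. The lower bound for this comes from strong distributivity of $j(\B)$ applied to a sequence of length $\lambda$ (the enumeration of $j``\{\text{partitions}\}$, which has size $\lambda$ in $M[G*H]$). This defines $\F$, and one then continues with step 3.
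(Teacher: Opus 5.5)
Your fallback is the argument that works, and it is exactly the paper's proof. The key point is that $j(\lambda)=\lambda$, so by elementarity $j(\B)$ is strongly $(<\!\lambda,2)$-distributive in $M[G][H]$ with respect to \emph{every} $\lambda$-sequence of partitions there. That includes the new sequence $\seq{\{j(b_{\Phi(\alpha)}),\sim j(b_{\Phi(\alpha)})\}}{\alpha<\lambda}$, where $\seq{b_\xi}{\xi<\chi}$ enumerates $\B$ and $\Phi:\lambda\to\chi$ is a surjection in $M[G][H]$. Pulling back gives a ${<}\lambda$-complete ultrafilter $\F\ni\bar b$ in $V[G][H]$. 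Then \autoref{easy-filter} gives $\gchoose$ a winning strategy there, and your step 3 is \autoref{closure-preservation1}. So step 2 should be discarded, not repaired.

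Your primary route has a genuine gap. The proviso that $\B$ stays strongly $(<\!\lambda,2)$-distributive in $V[G][H]$ is never established, and the reflection sketch does not establish it as described. Suppose a $\lambda$-sequence is bad for a small reflected algebra in $V[G\rest(\alpha+1)]$. That does not make it bad for $\B$ in $V[G]$. A decreasing deciding sequence for it in $V[G]$ may use elements of $\B$ outside the reflected piece, and may be added by the tail of $G$. Nothing in your sketch transfers the failure upward. The proviso is true, but only a posteriori: it follows at once once $\F$ exists.

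One correction to the fallback as you state it. Strong $(<\!\lambda,2)$-distributivity of $j(\B)$ does not give a single $b_{\textup{gen}}\le j(\bar b)$ deciding every $j(b_\xi)$. It only gives a $\le_{j(\B)}$-decreasing sequence $\seq{c_\alpha}{\alpha<\lambda}$ below $j(\bar b)$ with $c_\alpha$ deciding $j(b_{\Phi(\alpha)})$.

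A single such element can fail to exist. Take $\B=\ro(\Add(\lambda,1))^{V[G]}$. Each level ${}^{\alpha}2$ is a maximal antichain of size $\le\lambda$ whose $j$-image is pointwise. So $b_{\textup{gen}}$ would lie below $j(p)=p$ for every $p$ along a branch of length $\lambda$ through ${}^{<\lambda}2$, which is impossible.

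So define, as the paper does,
\[
\F=\{b\in\B \mid \exists\alpha<\lambda,\ c_\alpha\le_{j(\B)} j(b)\}.
\]
This is an ultrafilter because each binary partition is handled at some $\alpha$. It is ${<}\lambda$-complete for two reasons:
\begin{itemize}
\item $\lambda$ is regular, so fewer than $\lambda$ witnesses $\alpha_i$ are bounded by some $\alpha_*<\lambda$;
\item any $\seq{d_i}{i<\tau}$ with $\tau<\lambda$ already lies in $V[G]$, so $j(\bigwedge_{i<\tau}d_i)=\bigwedge_{i<\tau}j(d_i)\ge_{j(\B)} c_{\alpha_*}$.
\end{itemize}
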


We prove our main preservation lemma.

\begin{lemma}\label{closure-preservation1} Suppose $\P$ is a $<\!\lambda$-closed poset that preserves a cardinal $\mu$ and that $\B$ is a complete Boolean algebra. If $\P$ forces that $\gchoose$ has a winning strategy for $\G_{<\lambda}^\mu(\B)$, then $\gchoose$ has a winning strategy for $\G_{<\lambda}^\mu(\B)$ in the ground model.\end{lemma}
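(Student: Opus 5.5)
The plan is to pull the winning strategy back from $V^\P$ to $V$ by having $\gchoose$, in the ground model, run the generic strategy \emph{along a descending sequence of conditions}, deciding one move at a time. The hypothesis that $\P$ is $<\!\lambda$-closed is exactly what keeps this simulation alive through every round $\xi<\lambda$. Two preliminary observations make the plan coherent.

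First, since $\P$ is $<\!\lambda$-closed, it adds no new sequences of length $<\!\lambda$ of ground-model objects. Hence every partial play of $\G_{<\lambda}^\mu(\B)$ in $V^\P$ already lies in $V$. Second, $\B$ remains a complete Boolean algebra in $V^\P$, or at least its relevant meets of $<\!\lambda$ many elements are unchanged. This holds because the meets in question are meets of sequences lying in $V$, and any witness that such a meet is nonzero can be checked in $V$.

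The key step is to fix a $\P$-name $\dot\sigma$ and a condition $p_0$ forcing that $\dot\sigma$ is a winning strategy for $\gchoose$. I will make the convention that $p_0=1_\P$, or else work below $p_0$. In $V$ I then build $\gchoose$'s strategy $\tau$ together with an auxiliary assignment: to each partial play $\vec s$ consistent with $\tau$, I attach a condition $p_{\vec s}$. These conditions are arranged to satisfy three requirements.

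\begin{enumerate}
\item They are decreasing along extensions of plays.
\item $p_{\vec s}$ forces that $\vec s$ is consistent with $\dot\sigma$.
\item At limit stages, $p_{\vec s}$ is a lower bound of the earlier conditions.
\end{enumerate}

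The rounds are handled as follows.

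\begin{itemize}
\item When $\gcut$ plays a partition $\{b_i^\xi:i<\mu\}$ extending $\vec s$, I choose $q\le p_{\vec s}$ deciding $\dot\sigma(\vec s{}^\frown\{b^\xi_i\})=\check i_\xi$. Here $\gchoose$'s move is a single index for $\G^\mu_{<\lambda}$, that is, a set in $[\mu]^{<2}$. Since the index set $\mu$ and the partition are in $V$, such a $q$ exists. I then let $\tau$ answer $i_\xi$ and set the new condition to be $q$.
\item At limit $\eta<\lambda$, the sequence of conditions has length $<\!\lambda$, so closure provides a lower bound $p_\eta$. I use a fixed well-order of $\P$ to choose lower bounds and decisions canonically, so that $\tau$ is a genuine function.
\end{itemize}

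It remains to verify that $\tau$ wins. Take any play of length $\eta$ consistent with $\tau$, and let $p$ be a lower bound of the attached conditions. Such a $p$ exists by closure, since $\eta<\lambda$. Then $p$ forces that the play is consistent with $\dot\sigma$, hence that $\bar b\wedge\bigwedge_{\xi<\eta}b^\xi_{i_\xi}\neq 0_\B$. This meet is computed from a sequence in $V$. Its value is absolute between $V$ and $V^\P$, because $\P$ adds no new subsets of $\B$ of size $<\!\lambda$, so the meet over that sequence agrees. It is therefore nonzero in $V$.

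I expect the main obstacle to be this absoluteness of completeness and meets. In $V^\P$, the game formally refers to $\B$ as the same object. If $\B$ is no longer complete there, one must interpret the game via the completion of $\B$ and check that meets of length $<\!\lambda$ of ground-model elements are the same in both. I would handle this by noting that a meet $\bigwedge X$ of $X\subseteq\B$ in $V$ remains the greatest lower bound in $V^\P$. The reason is that any lower bound $c\in\B$, which is an element of $V$, is a lower bound in $V$ as well. The hypothesis that $\mu$ is preserved should be used only to ensure that "partition of size $\mu$" means the same thing in both models, so that $\gcut$'s moves in $V$ are legal moves in $V^\P$.
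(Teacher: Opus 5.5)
Your proposal is correct and is essentially the paper's argument. The paper likewise builds a system of conditions along $\gchoose$'s simulated plays, indexed there by the tree ${}^{<\lambda}\Theta$ through an enumeration of $\gcut$'s possible partitions, with each condition deciding $\dot\sigma$'s next response; it takes lower bounds at limits by $<\!\lambda$-closure and reads off the ground-model strategy from this assignment. Your extra care about the absoluteness of meets and joins of ground-model sets, and about the role of preserving $\mu$, fills in details the paper leaves implicit. Your first preliminary observation (that every partial play in $V^\P$ already lies in $V$) is not needed and can fail when $\mu\ge\lambda$, but nothing in your argument depends on it.
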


\begin{proof} Let $\dot{\sigma}$ be a $\P$-name that is forced to be a winning strategy for $\gchoose$ in $\G_{<\lambda}^\mu(\B)$. We fix some $\bar{b}$ and describe a winning strategy for a run of the game where $\bar{b}$ is $\gcut$'s opening move. Let $\Theta = |\B|$, and for $b \in \B$, let $\seq{\{b^\alpha_i : i < \mu\}}{\alpha<\Theta}$ be a fixed enumeration of the $\mu$-sized partitions of $\B$ that are compatible with $\bar{b}$ (allowing repetitions if necessary). 

We will construct an assignment on the tree ${}^{<\lambda} \Theta$ of the form $t \mapsto (b^{\bar{b}}_t,p^{\bar{b}}_t) \in \B^+ \times \P$ for $t \in {}^{<\lambda}\Theta$. Technically the superscripts give information about the opening move, but let us suppress this notation and instead write $(b_t,p_t)$ for $t \in {}^{<\lambda}\Theta$.

This assignment will have the property that $b_{t {}^\frown \langle \alpha \rangle} \in \{b_t^{\alpha,i}: i < \mu \}$ for all $t$. Therefore, an element $t$ with domain $\dom(t)=\zeta$ can be associated with a play of $\G_{<\lambda}^\mu(\B)$ of the form
\[
\vec{s}_t = \bar{b} {}^\frown \seq{ \{b_{t \rest i}^{t(\xi),i}: i < \mu\},b_{t(\xi)}}{\xi<\zeta}
\] 
where it is $\gcut$'s turn to play.

We will construct the assignment such that the following properties hold:

\begin{enumerate}

\item $b_\emptyset = \bar{b}$,

\item if $u \sqsupseteq t$ then $p_u \le_\P p_t$,

\item if $\dom(t)=\xi+1$ and $t = u {}^\frown \langle \alpha \rangle$, then $p_t \Vdash \bq \dot{\sigma}(\vec{s}_u {}^\frown \{b_u^{\alpha,i}: i < \mu \}) = b_t\eq$.

\end{enumerate}

The assignment is constructed by induction on $\dom(t)$. 

Suppose that $\dom(t)=\xi+1$, that $t = u {}^\frown \langle \alpha \rangle$, and we have defined $b_u$ and $p_u$. Then choose $p' \le p_u$ such that $p'$ forces $``\dot{\sigma}(\bar{b} {}^\frown  \vec{s}_u {}^\frown \{b_u^{\alpha,i}: i < \mu\})= b'\textup{''}$. Then let $p_t = p'$ and let $b_t = b'$.

Suppose that $\dom(t)$ is a limit $\xi$. Then let $p_t$ be a lower bound of $\seq{p_{t \rest \xi}}{\xi<\dom(t)}$ and let $b_t = \bigwedge_{\xi<\dom(t)}b_{t \rest \xi}$. Since $p_t$ forces that $\vec{s}_t$ is a run of the game in which $\gchoose$ plays according to $\dot{\sigma}$, and that $b_t$ is the meet of $\gchoose$'s selections, it follows that $b_t \ne 0_\B$.

Now we will define a winning strategy $\tilde{\sigma}$ for $\gchoose$ in $V$. Fix an opening move $\bar{b}$; we will denote $\tilde{\sigma}$ with respect to this opening move given the assignment $t \mapsto (b_t,p_t)$ with $\bar{b}$ implicit.  In particular, we will define $\tilde{\sigma}$ such that any play of the game (with $\bar{b}$ opening) in which it is $\gchoose$'s turn will take the form $\bar{b} {}^\frown  \vec{s}_t {}^\frown \{b_t^{\alpha,i}: i < \mu \}$ for some $t \in {}^{<\lambda}\Theta$ and $\alpha<\Theta$. Given such a position, $\gchoose$ will select $b_{t {}^\frown \langle \alpha \rangle}$.

To see that this is a winning strategy, note that full run of the game in which $\gchoose$ plays according to $\tilde{\sigma}$, minus $\gchoose$'s last move, can be represented as
\[
\vec{s}=\bar{b} {}^\frown  \seq{\{b_{f \rest \xi}^{f(\xi),i}: i < \mu \},b_{f(\xi)}}{\xi<\lambda}
\]
for some $f:\lambda \to \Theta$. The play continues at every stage because of the way the tree is defined\end{proof}

% Let $p_f$ be a lower bound of $\seq{p_{f \rest \xi}}{\xi<\lambda}$. Then $p_f$ forces that $\vec{s}$ is a run of the game in which $\gchoose$ plays according to the strategy $\dot{\sigma}$, and therefore an extension of $p_f$ forces that $\seq{b_{f \rest \xi}}{\xi<\lambda}$ has a nonzero lower bound $b_f$. Hence $\seq{b_{f \rest \xi}}{\xi<\lambda}$ has a lower bound in $V$.

\begin{proof}[Proof of \autoref{zapletal-answer}]

We will show that in $V[G]$, for any complete and strongly $(<\!\lambda,\mu)$-distributive Boolean algebra $\B$, there is a winning strategy in the game $\G^{\mu}_{<\lambda}(\B)$ for all cardinals $ \mu < \lambda$. This is sufficient by \autoref{3par-dist-ws}.

 Let $\dot{\B}$ be a $\Col(\lambda,<\!\kappa)$-name for such a Boolean algebra of cardinality $\chi$, let $j:V \to M$ be a $\chi$-supercompact embedding with critical point $\kappa$, and let $G$ be $\Col(\lambda,<\!\kappa)$-generic over $V$. Write $\B = \dot{\B}_G$.
 
As in standard arguments, we write
\[
j(\Col(\lambda,<\!\kappa))=\Col(\lambda,<\!\kappa) \times \prod_{\kappa \le \alpha < j(\kappa)}\Col(\lambda,\alpha) = \Col(\lambda,<\!\kappa) \times \P
\]
where $\P$ is our notation for the $<\lambda$-closed remainder term.

We will argue that in $M[G][H]$, there is a $<\lambda$-complete ultrafilter $\mathcal{F}$ on $\B$. By \autoref{easy-filter}, this implies that in $V[G][H]$, $\gchoose$ has a winning strategy in $\G_{<\lambda}^{\mu}(\B)$. By \autoref{closure-preservation1} and the fact that $\P$ is $\lambda$-closed, this implies that $\gchoose$ has a winning strategy for $\G_{<\lambda}^{\mu}(\B)$ in $V[G]$.

We establish some conditions for defining a generic element of $j(\B)$. Let $\seq{\dot{b}_\xi}{\xi<\chi}$ be a sequence of $\Col(\lambda,<\!\kappa)$-names forced to be an enumeration of $\dot{\B}$. Then $\seq{j(\dot{b}_\xi)}{\xi<\chi} \in M$ by $M^\chi \subseteq M$, and since $G \ast H \in M[G \ast H]$, it follows that we have $\seq{j(b_\xi)}{\xi<\chi} \in M[G][H]$ where $b_\xi = (\dot{b}_\xi)_G$. Then $\seq{\{ j(b_\xi),\sim j(b_\xi)\}}{\xi<\chi} \in M[G][H]$ is a sequence of partitions of $j(\B)$. Also, in $M[G][H]$, $\chi$ has cardinality $\lambda$: if $D_\alpha \subseteq j(\Col(\lambda,<\!\kappa))$ is the open dense subset of conditions $p$ with $\langle i, \chi \rangle \in \dom(p)$ and $p(i,\chi) = \alpha$ for some $i<\lambda$ (using that $j(\kappa)>\chi$), then $D_\alpha^M = D_\alpha^V$ and $D_\alpha \in M$ for all $\alpha<\chi$, so $\seq{D_\alpha}{\alpha<\chi} \in M$, so we can define a surjection $\lambda \to \chi$ in $M[G][H]$ by choosing $p_\alpha \in D_\alpha \cap (G \ast H)$ and taking $\bigcup_{\alpha \in \chi}p_\alpha$.  Also, $M[G][H] \models \textup{``}j(\B)$ is strongly $(<\!j(\lambda),j(2))=(<\!j(\lambda),2)$-distributive$\textup{''}$ by elementarity.

Fix some $\bar{b} \in \B$. Now we will define the object from which we define a $\lambda$-complete $\B$-ultrafilter containing $\bar{b}$. Let $\Phi:\lambda \to \chi$ be a surjection in $M[G][H]$. Applying strong $(<\!\lambda,2)$-distributivity in $M[G][H]$, we find a $\le_{j(\B)}$-decreasing sequence $\seq{c_\alpha^\textup{gen}}{\alpha<\lambda} \in j(\B)^+ \cap M[G][H]$ below $j(\bar{b})$ such that for all $\alpha<\chi$, $c_\alpha^\textup{gen} \wedge d \ne 0_\B$ for exactly one of $d \in \{j(b_{\Phi(\alpha)}),\sim \! j(b_{\Phi(\alpha)})\}$; in other words, either $c_\alpha^\textup{gen} \le_{j(\B)} j(b_{\Phi(\alpha)})$ or $c_\xi^\textup{gen} \perp_{j(\B)} j(b_{\Phi(\alpha)})$.

We can therefore define
\[
\mathcal{F} = \{b \in \B \mid \exists \alpha<\lambda, c_\alpha^\textup{gen} \le_{j(\B)} j(b)\}
\]
in $V[G][H]$.

That $\mathcal{F}$ is a filter is immediate from elementarity of $j$. To see that $\mathcal{F}$ is an ultrafilter, suppose that $\{c_0,c_1\}$ partitions $\B$. Then let $\xi<\chi$ be such that $c_0=b_\xi$ and $c_1 = \sim \! b_\xi$ and let $\alpha<\lambda$ be such that $\Phi(\alpha) = \xi$. Then $\{j(b_\xi),\sim j(b_\xi)\}$ partitions $j(\B)$ in $M[G][H]$, so either $b_\textup{gen} \wedge j(b_\xi) \ne 0_{j(\B)}$ or $b_\textup{gen} \wedge \sim j(b_\xi) \ne 0_{j(\B)}$. Hence $b_\textup{gen}$ is $\le_{j(\B)}$-below either $j(b_\xi)$ or $\sim j(b_\xi)$.

To see that $\mathcal{F}$ is $<\lambda$-complete, consider a sequence $\seq{d_i}{i<\tau} \subseteq \mathcal{F}$ with $\tau<\lambda$. For $i<\tau$, let $\xi_i$ be such that either $d_i = b_{\xi_i}$ or $d_i = \sim \! b_{\xi_i}$ and let $\alpha_i$ be such that $\Phi(\alpha_i) = \xi_i$.  If $\alpha_* = \sup_{i<\tau}\alpha_i < \lambda$, we have that $c^\textup{gen}_{\alpha_*} \le d_i$ for all $i<\tau$. Since $\P$ is $<\!\lambda$-closed, the sequence $\vec{x}:=\seq{d_i}{i<\tau}$ is an element of $V[G]$. Therefore $M[G][H] \models \textup{``}j(\vec{x})$ has a greatest lower bound in $j(\B)$ above $c^\textup{gen}_{\alpha_*}\textup{''}$, since $j(d_i) \ge_{j(\B)} c^\textup{gen}_{\alpha_*}$ for all $i<\tau$. Therefore, by elementarity, $V[G] \models \textup{``}\vec{x}$ has a nonzero greatest lower bound in $\B\textup{''}$. Let $d_*$ be the greatest lower bound of $\vec{x}$ in $\B$ as computed in $V[G]$. Then by elementarity, since $d_* \le_{j(\B)} j(d_{\xi_i})$ for all $i<\tau$, it follows that
\[
c^\textup{gen}_{\alpha_*} \le \bigwedge_{i<\chi}j(d_{\xi_i}) = j \left(\bigwedge_{i<\chi}j(d_{\xi_i}) \right) = j(d_*),
\]
and so $d_* \in \mathcal{F}$.

As indicated, the proof is finished by the fact that $\mathcal{F}$ is a $<\!\lambda$-complete complete ultrafilter on $\B$ that contains $\bar{b}$ as an element and is defined in $V[G][H]$.\end{proof}

%\subsection{A Global Version of the Theorem} 

It is fairly straightforward to obtain a global version of \autoref{zapletal-answer}.

\begin{theorem}\label{global-zapletal} Assume the consistency of a proper class of supercompact cardinals. Then it is consistent that, for all successor cardinals $\lambda=\tau^+$ and every complete Boolean algebra $\B$, the game $\G^{\mu}_{<\lambda}(\B)$ is determined for $\mu < \lambda$.\end{theorem}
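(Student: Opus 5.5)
Starting from a proper class of supercompact cardinals, I would force with a class-length iteration of Lévy collapses that makes every successor cardinal $\lambda=\tau^+$ of the extension look like the $\lambda$ of \autoref{zapletal-actually}. Concretely, take $\seq{\kappa_\alpha}{\alpha\in\ON}$ increasing and continuous at no point, supercompacts chosen with $\kappa_{\alpha+1}$ above $\kappa_\alpha$ and all earlier supercompacts. Then do an Easton-support iteration (or product, with appropriate supports) of $\Col(\kappa_\alpha,<\!\kappa_{\alpha+1})$, interleaved with collapses $\Col(\tau^+,<\!\kappa_\alpha)$-style pieces so that in the final model the successor cardinals are exactly the $\kappa_\alpha$ and the cardinals strictly between the $\kappa$'s that are themselves produced with a supercompact sitting just above. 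The cleanest choice: let $\kappa_0=\omega_1$, and let each successor cardinal $\lambda$ of the final model be collapsed to by a factor $\Col(\lambda,<\!\kappa)$ with $\kappa$ the next supercompact, so every successor $\lambda$ has the form $\kappa_\alpha^+$-equivalent for which the relevant factor is $\Col(\lambda,<\!\kappa_{\alpha+1})$ with $\kappa_{\alpha+1}$ supercompact in the intermediate model. Checking that the class forcing preserves $\ZFC$ is standard via the closure of tails.

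Fix a successor $\lambda$ in the final model $V[G]$ and a complete Boolean algebra $\B$ there. Factor the iteration as $\P_{<\alpha}\ast\dot{\Col}(\lambda,<\!\kappa_{\alpha+1})\ast\dot{\mathbb{R}}$, where $\P_{<\alpha}$ has size $<\kappa_{\alpha+1}$ (so $\kappa_{\alpha+1}$ remains supercompact after it, by Lévy–Solovay) and the tail $\mathbb{R}$ is $<\!\kappa_{\alpha+1}$-closed, in particular $<\!\lambda$-closed and preserving $\mu<\lambda$. Now $\B$ lives in the full extension, not in the intermediate one, so I would not apply \autoref{zapletal-actually} directly; instead, rerun its argument with $\chi$ large enough to capture a name for $\B$ relative to the whole remaining iteration up to a stage beyond $|\B|$ (the tail beyond that stage is sufficiently closed to not change $\B$ or strategies for $\G^\mu_{<\lambda}(\B)$). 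Take $j$ a $\chi$-supercompact embedding for $\kappa_{\alpha+1}$ in the intermediate model $V[G_{<\alpha}]$. Then $j$ of the segment of the iteration from stage $\alpha$ up to the bounded stage factors as that segment followed by a $<\!\lambda$-closed remainder (since $j(\Col(\lambda,<\!\kappa_{\alpha+1}))$ quotient is $<\!\lambda$-closed and the later stages are even more closed). As in the proof of \autoref{zapletal-answer}, in $M[G][H]$ one gets $\chi$ collapsed to $\lambda$, uses the elementarity-given strong $(<\!j(\lambda),2)$-distributivity of $j(\B)$ (dichotomy: if $\B$ is not strongly $(<\!\lambda,\mu)$-distributive, $\gcut$ wins by \autoref{3par-dist-ws}) to build a generic decreasing sequence deciding all $j(b_\xi)$, and hence a $<\!\lambda$-complete ultrafilter containing any given $\bar b$. \autoref{easy-filter} then gives $\gchoose$ a strategy in $V[G][H]$, and \autoref{closure-preservation1} pulls it back to $V[G]$.

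The main obstacle is the lifting/factoring bookkeeping: making sure $j$ of the relevant initial segment of the class iteration really splits as (that segment) $\times$ (a $<\!\lambda$-closed quotient preserving $\mu$), and that the names $\dot b_\xi$ and the sequence $\seq{j(\dot b_\xi)}{\xi<\chi}$ land in $M$ — this needs $\chi$ chosen above the size of the bounded segment of the iteration, and Easton support at the inaccessible stages so that the remainder does not collapse $\lambda$ or add short sequences. I would handle it by bounding the iteration at the first stage $\beta$ with $\kappa_\beta>|\B|$, observing the tail past $\beta$ adds no subsets of $\B$'s size and no new plays of length $<\lambda$, and applying the argument to $\P_\beta$ only.
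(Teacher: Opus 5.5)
\paragraph{Verdict: genuine gap.} Your overall architecture is the paper's: an Easton-support iteration of collapses $\Col(\lambda,<\!\kappa)$ with $\kappa$ the next supercompact, truncated at a stage $\beta$ past $|\B|$, a $\chi$-supercompact embedding $j:V'\to M$ for $\kappa$ with $V'=V[G_{<\alpha}]$, a rerun of the ultrafilter argument from \autoref{zapletal-answer}, and a pullback via \autoref{closure-preservation1}. The gap is in the step you yourself flag as the main obstacle, which you never resolve. You claim that $j(\Col(\lambda,<\!\kappa)\ast\dot{\mathbb R}_\beta)$ factors as $(\Col(\lambda,<\!\kappa)\ast\dot{\mathbb R}_\beta)$ followed by a $<\!\lambda$-closed remainder, ``since the quotient is $<\!\lambda$-closed and the later stages are more closed.'' Closure does not give you a complete embedding. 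In fact $j(\Col(\lambda,<\!\kappa)\ast\dot{\mathbb R}_\beta)=(\Col(\lambda,<\!\kappa)\times\T)\ast j(\dot{\mathbb R}_\beta)$ with $\T=\Col(\lambda,[\kappa,j(\kappa)))$, and $j(\dot{\mathbb R}_\beta)$ acts only on cardinals $\ge j(\kappa)>\chi$. So the segment $\mathbb R_\beta$, which collapses cardinals in $[\kappa,\beta)$, has nowhere to sit except inside $\T$.

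\paragraph{The missing idea: absorption.} This is exactly the tool the paper invokes. In $V'[H]$ the segment $\mathbb R_\beta$ is $<\!\kappa$-closed, hence $\lambda$-strategically closed, and has size $<j(\kappa)$. Therefore $\T$ absorbs it: there is a $\T$-generic $K$ over $V'[H]$ with $G^{\textup{high}}\in V'[H][K]$, and the quotient taking $V'[H][G^{\textup{high}}]$ to $V'[H][K]$ is equivalent to a $<\!\lambda$-closed collapse. It is convenient to carry out the absorption inside $M$, which contains $\Col(\lambda,<\!\kappa)\ast\dot{\mathbb R}_\beta$ by $\chi$-closure, so that $G^{\textup{high}}\in M[H][K]$.

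\paragraph{Why truncation and closure alone do not suffice.} Truncating at $\beta$ disposes only of the tail beyond $\beta$. It does nothing about the segment between $\kappa$ and $\beta$, which is where $\B$ typically lives. Nor can you simply force with $\T$ over $V'[H][G^{\textup{high}}]$. In that case $K$ is mutually generic with $G^{\textup{high}}$, so $G^{\textup{high}}\notin M[H][K]$, and there is no master condition for $j[G^{\textup{high}}]$. Without one, $j$ never reaches a model containing $\B$, and $j(\B)$ is undefined. Once you have absorption, $j[G^{\textup{high}}]$ is a directed subset of size $\le\chi<j(\kappa)$ lying in $M[H][K]$. By the $<\!j(\kappa)$-directed closure of $j(\mathbb R_\beta)$ there, it has a lower bound. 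This is the one place your ``later stages are more closed'' remark genuinely does work. Forcing below that master condition lifts $j$ to $V'[H][G^{\textup{high}}]$. The whole extension over the model containing $\B$ is then $<\!\lambda$-closed, so \autoref{closure-preservation1} applies as you intended.
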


\begin{proof} We will show how to set up the argument so that it reduces to the proof of \autoref{zapletal-answer}.

Let $\seq{\kappa_\alpha}{\alpha \in \ON}$ be the sequence inductively defined so that $\kappa_0$ is the least supercompact, $\kappa_\alpha = \sup_{\beta<\alpha}\kappa_\beta$ for limits $\alpha$, and $\kappa_{\alpha+1}$ is the least supercompact above $\kappa_\alpha$. Let $\P = \seq{\P_\alpha}{\alpha \in \ON}$ be the Easton-support class iteration such that $\P_0 = \Col(\omega_1,<\!\kappa_0)$, and if $\alpha=\beta+1$, then $\P_\alpha = \P_\beta \ast \dot{\Col}(\aleph_{\alpha+1},<\!\kappa_*)$ where $\kappa_*$ is the least supercompact above $\aleph_{\alpha+1}$ in the extension by $\P_\alpha$.

Let $G$ be $\P$-generic over $V$. Observe that $\GCH$ holds in $V[G]$. Let $V[G_\alpha]$ denote the induced $\P_\alpha$-generic submodel of $V[G]$. We will argue that $V[G]$ witnesses the statement of the theorem.

We argue by induction on $\alpha$ that $\kappa_\alpha = \aleph_{\alpha+2}$ in $V[G_\alpha]$. Since $\P/G_\alpha$ is $\aleph_{\alpha+2}$-closed, this implies that $\kappa_\alpha = \aleph_{\alpha+2}$ in $V[G]$. The case $\alpha=0$ is immediate.  If $\alpha = \beta+1$, then it is the case that in $V[G_\beta]$, the induced $\P_\beta$-generic submodel, it holds that $\kappa_\beta = \aleph_{\beta+2}=\aleph_{\alpha+1}$. So the statement follows because $V[G_\alpha] = V[G_\beta][K]$ where $K$ is $\Col(\aleph_{\alpha+1},<\kappa_\alpha)$-generic over $V[G_\beta]$. Then the case for limit $\alpha$ follows easily by induction, where we get $V[G_\alpha] \models \bq \kappa_\alpha = \sup_{\beta<\alpha}\kappa_\beta = \sup_{\beta<\alpha}\aleph_\beta = \aleph_\alpha\eq$.

Now let $\lambda$ be a cardinal in $V[G]$ and let $\alpha$ be such that $\aleph_{\alpha+1} = \lambda$ in $V[G]$. Let $\B$ be a complete $(<\!\lambda,\mu)$-distributive Boolean algebra of cardinality $\aleph_\Theta$. So $\lambda \le \aleph_\Theta$, and by $\GCH$ the set of possible plays in $\G_{<\lambda}^{\mu}(\B)$ has cardinality $\aleph_\Theta$. Hence we only need to argue in $V[G_\Theta]$, which contains $\B$ by closure. Write $\P_\Theta = \P_\textup{low} \ast \Col(\check{\aleph}_{\alpha+1},<\kappa_\alpha) \ast \dot{\P}^\textup{high}$, and write $V[G] = V[G_\textup{low}][H][G^\textup{high}]$ where $H$ is $\Col(\aleph_{\alpha+1},<\!\kappa_\alpha)$-generic over $V[G_\textup{low}]$.

In $V'=V[G_\textup{low}]$, $\kappa_\alpha$ retains its supercompactness by smallness of $\P_\textup{low}$. It is sufficient to show that $\Col(\aleph_{\alpha+1},<\kappa_\alpha) \ast \dot{\P}^\textup{high}$ forces that $\gchoose$ has a winning strategy in $\G_{<\lambda}^{\mu,<\nu}(\B)$. Take a $\kappa_\Theta$-supercompact embedding $j:V' \to M$ with critical point $\kappa_\alpha$. Then the quotient of $\P_\Theta$ with $G_\textup{low}$ takes the form $\Col(\aleph_{\alpha+1},<\kappa_\alpha) \ast \dot{\P}^\textup{high}$ where $\dot{\P}^\textup{high}$ is $\aleph_{\alpha+1}$-closed. Write $j(\Col(\aleph_{\alpha+1}),<\!\kappa_\alpha) = \Col(\aleph_{\alpha+1},<\!\kappa_\alpha) \times \T$. 

We recall the absorption theorem:

\begin{fact}[Absorption Theorem] Suppose $\kappa$ is a regular cardinal and that $\P$ is a separative and $\kappa$-strongly strategically closed poset such that $|\P|<\lambda$. Then there is a complete embedding $\iota: \P \to \Col(\kappa,<\lambda)$ such that if $G$ is $\P$-generic over $V$, then $\Col(\kappa,<\lambda)$ is forcing-equivalent to $\Col(\kappa,<\lambda)/\iota(G)$. Moreover, this works if $\Col(\kappa,<\lambda)$ is replaced by $\Col(\kappa,A)$ where $\sup A = \lambda$ (see \cite[Section 14]{Handbook-Cummings}).\end{fact}

Applying absorption, there is a filter $K$ which is $\T$-generic such that
\[
V[G_\textup{low}][H][G^\textup{high}][K] = V[G_{\textup{low}}][H][K],
\]
and we therefore obtain the lift
\[
j:V[G]=V[G_\textup{low}][H][G^\textup{high}] \to V[G_\textup{low}][H][K].
\]
Since $K$ is adjoined with a $\aleph_{\alpha+1}$-closed forcing, \autoref{closure-preservation1} implies that is the enough to obtain a winning strategy for $\gcut$ in $V[G_\textup{low}][H][K]$. From here it is possible to use the lifted $j$ and the lifting argument from the rest of the proof of \autoref{zapletal-answer}.\end{proof}

\section{Undetermined Cut and Choose Games}\label{sec-andmore}

This section will use various combinatorial principles, so we refer the reader to some in-depth surveys \cite{AST-Magidor,Handbook-Eisworth,Cummings2005}.

\subsection{Approachability and the Cut and Choose Game}

Now we will sharpen some results on the existence of undetermined cut and choose games in successor form, meaning that they have a last round of the game.

We recall the definition of the approachability property.

\begin{definition}\label{approach-def} Let $\kappa$ be regular and let $\vec{a}=\seq{a_\alpha}{\alpha<\kappa}$ be a sequence of bounded subsets of $\kappa$. An ordinal $\gamma<\kappa$ is \emph{approachable} with respect to $\vec{a}$ if there is an unbounded subset $A \subset \gamma$ of order type $\cf( \gamma)$ such that for all $\beta<\gamma$ there is $\alpha<\gamma$ with $A\cap\beta=a_\alpha$. The \emph{approachability property} holds at $\kappa$ if there is a club $C \subseteq \kappa$ such that every $\gamma\in C$ is approachable with respect to $\vec{a}$, and we denote this $\kappa \in I[\kappa]$.\end{definition}

It is well-known that if $\kappa=\lambda^+$ and $\lambda^{<\lambda} = \lambda$, then $\kappa \in I[\kappa]$.

\begin{theorem}\label{ap-undetermined} Suppose that $\lambda$ is regular and that $\kappa=\lambda^+$. If $\kappa \in I[\kappa \cap \cof(\lambda)]$, then there is a complete $\lambda$-closed Boolean algebra $\B$ such that:

\begin{itemize}[$\circ$]
\item for all $\mu,\nu$ such that $\nu \le \lambda^+ \le \mu$, the game $\G^{\mu,<\nu}_\lambda(\B)$ is undetermined, and 
\item for all $\mu \ge 2$, $\G^{\mu}_\lambda(\B)$ is undetermined.
\end{itemize}\end{theorem}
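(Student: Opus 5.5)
We will take $\B$ to be the completion of the standard poset that adds a club through $\kappa \cap \cof(\lambda)$ using approachability. Fix a sequence $\vec{a}$ and a club $C$ witnessing $\kappa \in I[\kappa \cap \cof(\lambda)]$. Let $S \subseteq \kappa \cap \cof(\lambda)$ be the stationary set of approachable points in $C$. Let $\P$ be the poset of closed bounded subsets $p \subseteq S \cup \cof(<\!\lambda)$ of $\kappa$, ordered by end-extension. Set $\B = \ro(\P)$.

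\textbf{Distributivity without strategic closure.} The first step is to show that $\P$ is $\lambda$-closed; this is what the theorem means by $\B$ being ``$\lambda$-closed''. Unions of $<\!\lambda$ chains have supremum of cofinality $<\lambda$, so closure points are allowed. Next we will show that $\P$ is $(\lambda+1)$-distributive. The standard argument uses approachability: take an elementary substructure $N$ of size $\lambda$ with $\delta = N \cap \kappa \in S$. Approachability of $\delta$ lets us build a descending $\lambda$-sequence of conditions meeting the relevant dense sets. Its initial segments lie in $N$ because they are coded by the $a_\alpha$ with $\alpha < \delta$. The limit condition then has top point $\delta \in S$.

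On the other hand, $\P$ is not $(\lambda+1)$-strategically closed. Otherwise a winning strategy for \textsf{II} would force $S$ to contain a club, since $S$ is co-stationary in $\cof(\lambda)$. We will arrange co-stationarity by shrinking $S$ to a stationary, co-stationary subset of the approachable points. We should check that distributivity survives this shrinking: it does, because stationarity of $S$ is all the elementary-substructure argument uses.

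\textbf{Neither player wins.} \emph{$\gcut$ has no winning strategy.} Play proceeds for $\lambda$ rounds and then a final meet. $(\lambda,\mu)$-distributivity for every $\mu$ gives strong $(<\!\lambda+1,\mu,<\!\nu)$-distributivity: a generic $G$ picks indices in $V$, and any condition forcing the choice function gives the decreasing sequence. So \autoref{foreman-generalization} and its three-parameter analogue give no winning strategy for $\gcut$. This is the direction of \autoref{3par-dist-ws} we actually need, and we may have to prove its converse for $\nu > 2$ by the same tree argument as in \autoref{foreman-generalization}.

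\emph{$\gchoose$ has no winning strategy.} Suppose $\sigma$ is a winning strategy for $\gchoose$. The plan is to have $\gcut$ play partitions deciding the successive ordinals $\max(\dot{C} \cap \gamma)$, or more simply partitions of size $\mu \ge \lambda^+$ deciding the $\xi$'th element of the generic club, with $\gchoose$ picking $<\nu$ pieces. We will build a continuous increasing chain of elementary substructures $N_\gamma$, $\gamma < \kappa$, closed under $\sigma$. At a limit $\delta \in \cof(\lambda) \setminus S$ with $\delta = N_\delta \cap \kappa$, $\gcut$ plays inside $N_\delta$ cofinally many partitions pushing the club above each $N_\gamma \cap \kappa$. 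Because $\gchoose$ follows $\sigma$, all choices stay in $N_\delta$. Any condition below the final meet must then have $\delta$ as a limit point, so $\delta$ lies in the condition, but $\delta \notin S$; the meet is $0$ and $\gcut$ wins, contradicting that $\sigma$ is winning.

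The main obstacle is this last step when $\nu > 2$. Here $\gchoose$ may choose up to $<\nu \le \lambda^+$ pieces, so the joins do not determine a single ordinal. We must ensure that every piece selected lies below conditions whose suprema are pushed to $\delta$. For this we will have $\gcut$ use partitions of size $\mu \ge \lambda^+$ indexed by the possible next ordinals. Each selected piece then lies in $N_\delta$ and has bound $<\delta$. Finally, to get the game $\G^\mu_\lambda(\B)$ for all $\mu \ge 2$, the binary case needs $\gcut$ to simulate large partitions by a sequence of binary cuts. We will interleave these cuts, which requires distributivity to reassemble the $\mu$-partition within $\lambda$ rounds.
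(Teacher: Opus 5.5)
Your treatment of $\gcut$ and of the first bullet is essentially the paper's. Your $\P$ is the paper's club-shooting poset with $S$ and its complement in $\kappa\cap\cof(\lambda)$ interchanged. Distributivity comes from approachable points. Against $\gchoose$ you let $\gcut$ play $\kappa$-sized partitions deciding $\min(\dot C\setminus(\beta_\xi+1))$ inside an elementary submodel, as in \autoref{first-approach}.

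One repair is needed there: $\delta\in(\kappa\cap\cof(\lambda))\setminus S$ must also be approachable with respect to a sequence in $N_\delta$. The partial plays are sequences of length $<\lambda$, and a continuous chain of models does not by itself put them in $N_\delta$. Without approachability you cannot conclude that $\gchoose$'s responses lie in $N_\delta$. Such $\delta$ exist, because $(\kappa\cap\cof(\lambda))\setminus S$ is stationary and club many points of cofinality $\lambda$ are approachable. Also, no converse of \autoref{3par-dist-ws} is needed for $\nu>2$: \autoref{easy-cnc}, or your generic-filter argument, already handles it.

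The genuine gap is the second bullet for $2\le\mu\le\lambda$. By \autoref{easy-cnc} this amounts to showing that $\gchoose$ has no winning strategy in $\G^{\candc}_\lambda(\B)$, and interleaving binary cuts does not supply this.

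Your argument needs, at each round $\xi<\lambda$, a response of $\gchoose$ lying in $N_\delta$ that confines $\min(\dot C\setminus(\beta_\xi+1))$ to a set of size $<\kappa$. While $\kappa$ many values remain possible, some piece of any partition into $\le\lambda$ pieces still allows $\kappa$ many, and $\gchoose$ may take it. A block of $\eta$ binary rounds separates at most $2^{|\eta|}$ pieces. So if $2^{<\lambda}=\lambda$, the value is pinned down only by a $\lambda$-sequence of choices. This holds always when $\lambda=\omega$, and whenever $\lambda^{<\lambda}=\lambda$, which is the case of Dobrinen's earlier theorem. That $\lambda$-sequence need not be an element of $N_\delta$, so nothing keeps the value below $\delta$. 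Distributivity of $\B$ does not help: it places the reassembled value in $V$, not in $N_\delta$. Blocks of length $<\lambda$ would work when $2^{<\lambda}>\lambda$, but the theorem covers all regular $\lambda$. For $\lambda=\omega$ the missing step is essentially Zapletal's theorem (\autoref{zapletal-thm}).

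The paper bridges this with Dobrinen's results.
\begin{enumerate}
\item It forces with $\Col(\lambda^+,2^\lambda)$, which adds no plays and preserves $<\!\lambda^+$-completeness.
\item In the extension, \autoref{big-zapletal} upgrades a winning strategy in $\G^{\candc}_\lambda(\B)$ to one with partitions of size $2^\lambda$.
\item \autoref{cut-to-bm} turns this into $(\lambda+1)$-strategic closure, which pulls back to $V$ by closure of the collapse.
\item Approachability then makes $\B$ preserve stationary subsets of $\kappa\cap\cof(\lambda)$, a contradiction.
\end{enumerate}
Your remark that $\P$ is not $(\lambda+1)$-strategically closed is exactly this last step. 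Its correct justification is that strategic closure plus approachability would preserve stationarity of $(\kappa\cap\cof(\lambda))\setminus S$, which $\P$ destroys. What your proposal lacks is the passage from a winning strategy in the binary game to strategic closure.
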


This follows a series of results of Dobrinen (also in joint work with Cummings) \cite{Dobrinen2003, Dobrinen2007,Dobrinen2008,Cummings-Dobrinen2007}. Some of these theorems pertain to obtaining Suslin algebras, which can fail to exist in the presence of approachability. \autoref{ap-undetermined} extends the last theorem in this series (\cite[Theorem 23]{Dobrinen2008}) by replacing the hypothesis $\lambda^{<\lambda}=\lambda$ with $\kappa \in I[\kappa \cap \cof(\lambda)]$.

To prove this theorem, we first provide a new version of the lemma in the source material, which we will use to address the first collection of cases.

\begin{lemma}\label{first-approach} Suppose that $\lambda$ is regular and $\lambda^+ \cap \cof(\lambda) \in I[\lambda^+]$. Suppose also that $\B$ is a complete Boolean algebra such that $\gchoose$ has a winning strategy for $\G^{\mu,<\nu}_\lambda(\B)$ where $\nu \le \lambda^+ \le \mu$.  Then $\B$ preserves stationary subsets of $\lambda^+ \cap \cof(\lambda)$.\end{lemma}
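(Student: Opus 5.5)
The plan is to argue by contradiction with an elementary submodel argument, as in Dobrinen's source lemma. The change is that the hypothesis $\lambda^{<\lambda}=\lambda$ is replaced by internally approachable submodels, which is what $\lambda^+\cap\cof(\lambda)\in I[\lambda^+]$ provides. Suppose $S\subseteq\lambda^+\cap\cof(\lambda)$ is stationary, $\bar b\in\B^+$, and $\dot C$ is a $\B$-name with $\bar b\Vdash$ ``$\dot C$ is a club in $\lambda^+$ disjoint from $\check S$''. Fix a winning strategy $\sigma$ for $\gchoose$ in $\G^{\mu,<\nu}_\lambda(\B)$. We will let $\gcut$ play a specific run against $\sigma$, open with $\bar b$, and show that the meet of $\gchoose$'s choices forces some $\gamma\in S$ into $\dot C$.

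\emph{Step 1 (the model).} Take $\theta$ large. Use the standard consequence of approachability (Shelah; Foreman--Magidor): since $S\subseteq I[\lambda^+]$ up to a club, there is an internally approachable chain. Concretely, there is a continuous $\in$-increasing sequence $\seq{N_i}{i<\lambda}$ of elementary submodels of $H(\theta)$ with the following properties:
\begin{itemize}[$\circ$]
\item each $N_i$ has size $\lambda$;
\item $\lambda\subseteq N_0$ and $\B,\sigma,\dot C,\bar b,S\in N_0$;
\item $\seq{N_j}{j\le i}\in N_{i+1}$ for every $i$;
\item $N=\bigcup_{i<\lambda}N_i$ satisfies $\gamma:=N\cap\lambda^+\in S$.
\end{itemize}
Obtaining this is the step I expect to be the main obstacle. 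One has to check that approachability of $\gamma$, in the sense of \autoref{approach-def}, on a club intersected with $S$, really yields stationarily many such internally approachable $N$ of length exactly $\lambda$ with $\sup(N\cap\lambda^+)\in S$. The route I would try first is to build the chain by closing under the approachability witness $\vec a$ and a fixed Skolem function, so that the initial segments $A\cap\beta$ of the cofinal set $A\subseteq\gamma$ appear in the models. Write $\gamma_i=\sup(N_i\cap\lambda^+)$; these are cofinal in $\gamma$.

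\emph{Step 2 (the run).} At round $i<\lambda$, $\gcut$ plays a maximal antichain deciding the ordinal $\min(\dot C\setminus\gamma_i)$. This antichain has at most $\lambda^+\le\mu$ pieces; refine or split pieces so that it has exactly $\mu$ nonzero pieces, each still deciding the value. By \autoref{two-games}, we may take these to be partitions of $1_\B$, so the play is not affected by what happened earlier except through $\gchoose$'s responses. By induction on $i$, the partial play through round $i$ is definable from $\sigma$, $\dot C$, $\B$ and $\seq{N_j}{j\le i}$. Hence the partial play lies in $N_{i+1}$, and so does $\gchoose$'s response $B_i\in[\mu]^{<\nu}$. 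Since $\nu\le\lambda^+$, we have $|B_i|\le\lambda$. Together with $\lambda\subseteq N_{i+1}$, this gives $B_i\subseteq N_{i+1}$. So every ordinal decided by a piece indexed in $B_i$ lies in $N_{i+1}\cap\lambda^+\subseteq\gamma$, and of course it is $\ge\gamma_i$.

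\emph{Step 3 (conclusion).} Because $\sigma$ is winning and the game has length $\lambda$, the element
\[
b^*=\bar b\wedge\bigwedge_{i<\lambda}\bigvee_{k\in B_i}b^i_k
\]
is nonzero. By Step 2, $b^*$ forces that for every $i<\lambda$, $\dot C\cap[\gamma_i,\gamma)\neq\emptyset$. Hence $b^*$ forces that $\dot C$ is unbounded in $\gamma$, so $\gamma\in\dot C$. Since $\gamma\in S$ and $b^*\le\bar b$, this contradicts $\bar b\Vdash\dot C\cap\check S=\emptyset$. Therefore $\B$ preserves the stationarity of $S$.
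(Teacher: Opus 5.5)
Your proposal is correct and follows essentially the same route as the paper. In both, $\gcut$ opens with $\bar b$ and at each round partitions according to the value of the next point of $\dot C$ above a cofinal sequence in some $\gamma\in S$ with $\gamma=M\cap\lambda^+$. Elementarity together with $|B_i|\le\lambda$ keeps the chosen values below $\gamma$, and the nonzero final meet then forces $\gamma\in\dot C$.

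The only difference is packaging. The paper skips your Step 1 translation into an internally approachable chain. It works directly with a single $M$ containing $\vec a$, with $M\cap\lambda^+=\delta\in S$ approachable, and uses that each initial segment of the approachability witness $A$ lies in $M$ to keep the partial plays in $M$. That is exactly the route you sketch for establishing Step 1.
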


\begin{proof}

Let $\dot{C}$ be a $\B$-name for a club in $\kappa$ as forced by some $\bar{b} \in \B$. We will argue that there is some $b \in \B$ such that $b \Vdash \bq \delta \in \dot{C}\eq$.

Let $\kappa = \lambda^+$ and fix a stationary set $S \subseteq \kappa \cap \cof(\lambda)$. Let $\vec{a}=\seq{a_\alpha}{\alpha<\kappa}$ and $D \subseteq \kappa$ witness approachability. Choose $\delta \in S$ so that $\delta$ is approachable with respect to $\vec{a}$ and so that $\delta = M \cap \kappa$ for an elementary submodel $M \prec H(\Theta)$ where $\Theta$ is sufficiently large (which can be done because the set of such $\delta$ is a club). Assume that $\dot{C},\bar{b} \in M$ and that $\sigma \in M$ where $\sigma$ is a winning strategy for $\gchoose$.

Let $A \subseteq \delta$ be unbounded with order-type equal to $\lambda$ such that for all $\beta<\delta$, there is $\gamma<\delta$ such that $A \cap \beta = a_\gamma$. Let $\seq{\beta_\xi}{\xi<\lambda}$ enumerate $A$.

We will construct a run
\[
\vec{s} = \bar{b} {}^\frown \seq{\{b_i^\xi : i< \kappa \},B_\xi}{\xi < \lambda}
\]
in $\G^{\kappa,<\nu}_\lambda(\B)$ (without loss of generality, we can let $\mu=\kappa$) by induction on $\xi<\lambda$ in such a way that
\[
\seq{\{b_i^\xi : i< \kappa \},B_\xi}{\xi < \eta} \in M
\]
for all $\eta<\lambda$. Specifically, we will define $\gcut$'s moves and let $\gchoose$'s moves be dictated by the strategy $\sigma$. For some $\xi<\lambda$ and let $\seq{\gamma_i}{i<\kappa}$ enumerate the set of $\gamma$ such that there is
\[
c \le \bar{b} \wedge \left(\bigwedge_{\zeta<\xi}\bigvee_{i \in B_\zeta}b_i^\zeta \right)
\]
forcing $\bq \min(\dot{C} \setminus (\beta_\xi+1)) = \gamma \eq$. Then let
\[
b^\xi_i = \bar{b} \wedge \left(\bigwedge_{\zeta<\xi}\bigvee_{i \in B_\zeta}b_i^\zeta \right) \wedge  \bigvee\{c \in \B \mid c \le \textup{ and } c \Vdash \bq \min(\dot{C} \setminus (\beta_\xi+1)) = \gamma_i \}
\]
and let $\gcut$ play the partition $\{b_i^\xi : i < \kappa\}$ on the $\xi$'th round. Since this move is defined from $\sigma$ and $\seq{\beta_\zeta}{\zeta \le \xi}$, it is an element of $M$.

Now let
\[
b_* = \bar{b} \wedge \left(\bigwedge_{\xi<\lambda}\bigvee_{i \in B_\xi}b_i^\xi \right).
\]
We argue that $b_* \Vdash \bq \delta \in \dot{C} \eq$: For each $\xi$, $|B_\xi|<\kappa$, it follows that $\sup B_\xi < \kappa$, which together with the fact that $B_\xi \in M$ implies that $\sup B_\xi < \delta$. This implies that $b_* \Vdash \bq \dot{C} \cap (\beta_\xi,\delta) \ne \emptyset \eq$ for all $\xi<\lambda$, which implies our claim by closure.\end{proof}

Now we prepare to address the second set of cases. We need the following from work of Dobrinen:

\begin{fact}\label{big-zapletal} Suppose that $|\mathcal{P}(2^{<\lambda})|=2^\lambda$ and that $\mathbb{A}$ is a $<\!\lambda^+$-complete Boolean algebra of cardinality $\lambda^+$. Then if $\gchoose$ has a winning strategy in $\G_\lambda^\candc(\mathbb{A})$, it follows that $\gchoose$ has a winning strategy in $\G_\lambda^{2^\lambda}(\B)$  \cite[Theorem 16]{Dobrinen2007}.\end{fact}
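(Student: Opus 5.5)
The plan is to simulate the large-partition game $\G_\lambda^{2^\lambda}(\mathbb{A})$ inside the binary game $\G_\lambda^\candc(\mathbb{A})$. The simulation lets $\gchoose$ copy the given winning strategy $\sigma$ for $\G_\lambda^\candc(\mathbb{A})$. I will use the partition-of-$1_\B$ form of the games from \autoref{two-games} throughout. The first reduction uses $|\mathbb{A}|=\lambda^+$. Any partition $\{b_i : i<2^\lambda\}$ that $\gcut$ plays has at most $\lambda^+$ nonzero pieces, because distinct nonzero pieces are distinct elements of $\mathbb{A}$. So each move of $\gcut$ can be re-indexed as a partition $\{b_i : i<\lambda^+\}$ with some pieces possibly $0$. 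Next fix in advance an injection $i\mapsto x_i$ from $\lambda^+$ into ${}^\lambda 2$, so that each piece is coded by a branch through $2^{<\lambda}$. For $\alpha<\lambda$ and $s\in{}^\alpha 2$, set $b^\xi_s=\bigvee\{b^\xi_i : x_i\rest\alpha = s\}$; these joins exist because $\mathbb{A}$ is $<\!\lambda^+$-complete.

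Fix a bijection $\pi:\lambda\times\lambda\to\lambda$ with $\pi(\xi,\alpha)\ge\xi$. The bookkeeping is as follows. When $\gcut$ plays the $\xi$'th partition in the big game, the simulated binary game will, at rounds $\pi(\xi,\alpha)$ for $\alpha<\lambda$, ask the binary question $\{b^\xi_{s^\frown\langle 1\rangle}, \sim b^\xi_{s^\frown\langle 1\rangle}\}$ relative to the current node $s$. Here $s=x\rest\alpha$ is determined by $\sigma$'s previous answers about partition $\xi$. In effect, $\sigma$ is fed the bits of a branch $x^\xi\in{}^\lambda 2$ one at a time, interleaved across all $\xi$. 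Since $\lambda$ is regular, $\lambda\cdot\lambda=\lambda$ rounds suffice. The big game has only one choice per partition, and it must be made at round $\xi$, before all bits of $x^\xi$ are known. To handle this, $\gchoose$ will choose the index $i$ with $x_i = x^\xi$ as soon as the branch is decided in $V$; this is where the coding has to be arranged carefully (see below). A run of the big game then corresponds to a run of the binary game following $\sigma$, and the meets agree: the final meet of $\sigma$'s choices lies below $\bigwedge_\xi\bigwedge_\alpha b^\xi_{x^\xi\rest\alpha}$, and this meet is nonzero since $\sigma$ is winning.

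The main obstacle is the limit problem. A nonzero element below $\bigwedge_\alpha b^\xi_{x^\xi\rest\alpha}$ need not lie below any single $b^\xi_i$. The branch $x^\xi$ might not be one of the $x_i$, or the meet might be compatible with pieces along many different branches. This is where I would use the hypothesis $|\mathcal{P}(2^{<\lambda})|=2^\lambda$, following Dobrinen. The idea is not to fix the coding $i\mapsto x_i$ once and for all. Instead, $\gcut$ in the simulated game additionally plays, at further interleaved rounds, binary partitions of the form $\bigvee\{b^\xi_i : i\in Y\}$ versus its complement, for subsets $Y$ drawn from a family indexed by subsets of the tree $2^{<\lambda}$. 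The hypothesis ensures there are only $2^\lambda$ such separating questions to enumerate, so they can be bookkept along with everything else. Each answer from $\sigma$ then restricts to a set of indices that is simultaneously determined by a branch and by a decided family of sets of nodes. The goal is to show, using $<\!\lambda^+$-completeness and the fact that the answers from $\sigma$ are made in $V$, that only one $i$ remains compatible with the final meet. Then $\gchoose$'s choice of that $i$ in the big game is valid at every stage, and the meet in the big game is nonzero. I expect this separation step, and checking that at each limit stage the partial meets still lie below a single chosen piece, to be the hard part; the interleaving bookkeeping is routine.
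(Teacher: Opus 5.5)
There is a genuine gap, and it lies at a step you pass over rather than the one you flag.

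\textbf{The timing problem.} In $\G^{2^\lambda}_\lambda(\mathbb{A})$, $\gchoose$ must name a piece of the $\xi$'th partition $P_\xi$ at round $\xi$. $\gcut$'s later moves may depend on that choice; in the game as defined, $\gcut$ partitions the meet of $\gchoose$'s selections. In your simulation, the branch $x^\xi$ is fixed only by $\sigma$'s answers at the rounds $\pi(\xi,\alpha)$, $\alpha<\lambda$, and these rounds are cofinal in $\lambda$. Those answers depend on the whole binary history, including questions built from $P_{\xi'}$ for $\xi'>\xi$, and those partitions may in turn depend on what was chosen at round $\xi$. So ``choose $i$ as soon as the branch is decided'' is circular: the branch is decided only when the game is over.

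What the interleaving actually handles is a deferred-choice game in which $\gcut$ never sees $\gchoose$'s choices. There $\gcut$ in effect commits to a fixed sequence $\seq{P_\xi}{\xi<\lambda}$. Winning that game only says that every such sequence admits a choice of pieces with nonzero meets. That is $(\lambda,2^\lambda)$-distributivity, i.e.\ (via \autoref{foreman-generalization}) that $\gcut$ has no winning strategy in $\G^{2^\lambda}_\lambda(\mathbb{A})$. This already follows from $(\lambda,2)$-distributivity by coding $\lambda\times\lambda\to 2$. It uses neither $|\mathcal{P}(2^{<\lambda})|=2^\lambda$ nor anything about $\mathbb{A}$ beyond $|\mathbb{A}|\le 2^\lambda$. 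The distance between this and a winning strategy for $\gchoose$ is exactly the non-determinacy phenomenon the paper studies.

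The obvious alternatives also fail. Asking all $\lambda$ bits of $P_\xi$ during round $\xi$ would need a run of $\sigma$ of order type $\lambda\cdot\lambda>\lambda$. Restarting $\sigma$ below the current meet at each round loses control of the meets at limit rounds. What is missing is a device that lets $\gchoose$ commit to one piece of $P_\xi$ at round $\xi$ while everything is still absorbed into a single $\sigma$-run of length $\lambda$. This is where $|\mathbb{A}|=\lambda^+$ and the cardinal arithmetic would have to be used, and your proposal does not provide it. The paper gives no proof to compare with: it quotes Dobrinen's Theorem 16 and remarks only that $<\!\lambda^+$-completeness suffices.

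\textbf{Two further points.} First, the ``limit problem'' you identify as the main obstacle is not one. Fix a partition $\{b_i\}$ of $1$. The elements $b_s$ for $s\in{}^\alpha 2$ are pairwise disjoint. Suppose $0\ne c\le\bigwedge_{\alpha<\lambda}b_{x\rest\alpha}$.
\begin{itemize}
\item Some $b_j$ meets $c$, since $\bigvee_j b_j=1$.
\item Any such $j$ has $x_j\rest\alpha=x\rest\alpha$ for every $\alpha$, so $x_j=x$ and $j$ is unique.
\item Then $c\le\sim\bigvee_{i\ne j}b_i=b_j$.
\end{itemize}
The same computation shows $\bigwedge_{\beta<\alpha}b_{x\rest\beta}=b_{x\rest\alpha}$ at limit levels. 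So the extra separating questions indexed by subsets of $2^{<\lambda}$ address nothing. They are also asked at later rounds, so they do not help with timing either.

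Second, the joins $b^\xi_s$ are not supplied by $<\!\lambda^+$-completeness. For any injective coding of $\lambda^+$ indices, one checks that some node $s$ has both $\{i : x_i\rest\alpha=s\}$ and its complement of size $\lambda^+$. Such joins can fail to exist: take $\lambda=\omega$ and the partition into atoms of the countable--cocountable algebra on $\omega_1$ under $\mathsf{CH}$. So even the deferred version of your argument needs full completeness rather than the hypothesis in the statement.
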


In the original, this theorem is stated in a context where $\B$ is assumed to be complete. However, examination of the proof shows that $<\lambda^+$-completeness is sufficient.

We will also use Dobrinen's direct generalization of Veli{\v c}kovi{\'c} \cite[Theorem 2.1]{Velickovic1986}:

\begin{fact}\label{cut-to-bm} If $\gchoose$ has a winning strategy in $\G_\lambda^{\infty}(\B)$, then $\B$ is $(\lambda+1)$-strategically closed  \cite[Theorem 29]{Dobrinen2007}.\end{fact}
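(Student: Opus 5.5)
The plan is to adapt Veli{\v c}kovi{\'c}'s original argument \cite[Theorem 2.1]{Velickovic1986} to length $\lambda+1$. A winning strategy $\sigma$ for $\gchoose$ in $\G^\infty_\lambda(\B)$ will be converted into a strategy $\tau$ for Player \textsf{II} in the Banach--Mazur game of length $\lambda+1$ on $\B^+$. I would use the alternate form of the game from \autoref{two-games}, where $\gcut$ partitions $1_\B$, so that partitions can be chosen freely.

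Throughout a run of the Banach--Mazur game, \textsf{II} privately maintains an auxiliary partial play $\vec{s}_\xi$ of $\G^\infty_\lambda(\B)$ in which $\gchoose$ follows $\sigma$. Let $c_\xi := \bar{b}\wedge\bigwedge_{\zeta<\xi}d_\zeta$, where the $d_\zeta$ are $\gchoose$'s selections. The invariant is that the current Banach--Mazur condition $q_\xi$ satisfies $q_\xi\le c_\xi$, and that each $d_\zeta$ lies below the move that \textsf{I} made at stage $\zeta$. The opening move $\bar{b}$ is \textsf{I}'s first condition.

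\textbf{Successor stages.} The key step is a one-round lemma: for every $\sigma$-position with meet $c$ and every $p\le c$, there is a partition $P$ of $1_\B$ such that $\sigma(\vec{s}{}^\frown P)\wedge p\ne 0_\B$.

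Once the lemma is available, \textsf{II} answers \textsf{I}'s move $p$ by appending $P$ to the auxiliary play. \textsf{II} then plays $q := p\wedge\sigma(\vec{s}{}^\frown P)$, and the invariant is preserved.

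To prove the lemma, I would argue by contradiction. Suppose that for every partition $P$, the response $\sigma(\vec{s}{}^\frown P)$ is disjoint from $p$. Since $\gcut$ may play partitions of arbitrary size, $\gcut$ can be made to play one huge partition $P^*$ refining $\{p,\sim p\}$. I would choose $P^*$ so that each piece below $\sim p$ is too fine for $\gchoose$ to survive from it. Concretely, I would do a Zorn-type exhaustion over the pieces $e\perp p$ that $\sigma$ can select, shrinking each such $e$ along a $\sigma$-losing continuation. This is where the unlimited partition size of $\G^\infty$ is essential.

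I expect this exhaustion to be the main obstacle. If it fails as stated, the fallback is Veli{\v c}kovi{\'c}'s tree version. In that version \textsf{II} does not track a single auxiliary play but a maximal antichain of $\sigma$-plays below the current condition, indexed by a tree. \textsf{II}'s move is then the join of the pieces compatible with $p$, meet $p$; this join is nonzero because the family is maximal.

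\textbf{Limit stages $\xi\le\lambda$.} The auxiliary play $\vec{s}_\xi$ is a legal $\sigma$-play of length $\xi$, so the winning condition for $\sigma$ gives $c_\xi\ne 0_\B$. The remaining work is to check that $c_\xi$ lies below every earlier Banach--Mazur condition. This follows from the invariant, because each $d_\zeta$ was chosen below \textsf{I}'s move at stage $\zeta$. Hence \textsf{II} can play $c_\xi$, which is a lower bound of the sequence so far.

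At stage $\lambda$ itself, the final round of $\G^\infty_\lambda$ ensures that $\bigwedge_{\xi<\lambda}d_\xi\ne 0_\B$. This gives the required lower bound for the whole run of length $\lambda+1$, so $\tau$ is winning and $\B$ is $(\lambda+1)$-strategically closed. In the tree version, the same limit argument is applied along each branch, using completeness of $\B$ to take joins over the level.
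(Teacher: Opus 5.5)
You have a genuine gap at the successor step. Your one-round lemma only gives $\sigma(\vec{s}{}^\frown P)\wedge p\ne 0_\B$, and that is trivially true: take $P=\{c,\sim c\}$, and $\gchoose$ is forced to pick $c$. It does not give the second half of your invariant, $d_\zeta\le p_\zeta$. That half is exactly what your limit step uses to see that $c_\xi$ lies below the earlier Banach--Mazur conditions. From $q=p\wedge d$ you only get $q\le c_{\xi+1}$, which is the wrong direction.

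The strong form, ``some $P$ has $\sigma(\vec{s}{}^\frown P)\le p$,'' is false for arbitrary $p\le c$. In $\B=\mathcal P(\{0,1\})$ with opening $1_\B$, let $\sigma$ always pick the piece containing the atom $\{0\}$; then no answer of $\sigma$ lies below $p=\{1\}$. So no exhaustion can establish it, and in any case $\sigma$ has no losing continuations to shrink along. The tree fallback does not help either. The join of the pieces compatible with $p$, met with $p$, is just $p$, so \textsf{II} makes no progress and nothing puts the branch meets below \textsf{I}'s moves.

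The missing idea is that \textsf{II} must use its own moves to confine \textsf{I} to the region where $\sigma$'s answers are dense. For a $\sigma$-position $\vec{s}$ with meet $c$, let $O(\vec{s})=\{c\wedge\sigma(\vec{s}{}^\frown P) : P\text{ a partition}\}$ and $e(\vec{s})=\bigvee\{y\le c : O(\vec{s})\text{ is dense below } y\}$. Then $O(\vec{s})$ is dense below $e(\vec{s})$.

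Moreover $e(\vec{s})\ne 0_\B$. Otherwise the elements $x\le c$ with no member of $O(\vec{s})$ below them are dense below $c$. Then $\gcut$ can play $A\cup\{\sim c\}$, where $A$ is a maximal antichain of such $x$; this is where unbounded partition size is used. The answer of $\sigma$ must be some $x\in A$, and that $x$ itself belongs to $O(\vec{s})$, a contradiction.

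Now let \textsf{II} play $e(\vec{s}_\xi)$ at each of its turns, including limits. When \textsf{I} answers with $p\le e(\vec{s}_\xi)$, pick $P$ with $c_\xi\wedge\sigma(\vec{s}_\xi{}^\frown P)\le p$ and extend the single auxiliary play by that round. Then every $\sigma$-meet lies below all earlier Banach--Mazur moves. Your limit argument, including the final stage $\lambda$, then goes through unchanged. (The paper itself gives no proof of this statement; it only cites Dobrinen's Theorem 29.)
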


Putting this information together, we get:

\begin{proposition} Suppose that $|\mathcal{P}(2^{<\lambda})|=2^\lambda$ and that $\mathbb{A}$ is a $<\!\lambda^+$-complete Boolean algebra of cardinality $\lambda^+$. Then if $\gchoose$ has a winning strategy in $\G_\lambda^\candc(\mathbb{A})$, it follows that $\mathbb{A}$ is $(\lambda+1)$-strategically closed.\end{proposition}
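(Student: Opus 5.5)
The plan is to chain \autoref{big-zapletal} and \autoref{cut-to-bm} through a monotonicity observation about the games. Assume $\gchoose$ has a winning strategy in $\G_\lambda^\candc(\mathbb{A})$. The hypotheses $|\mathcal{P}(2^{<\lambda})|=2^\lambda$, $<\!\lambda^+$-completeness and $|\mathbb{A}|=\lambda^+$ are exactly those of \autoref{big-zapletal}, with the remark following it that $<\!\lambda^+$-completeness suffices. It yields a winning strategy for $\gchoose$ in $\G_\lambda^{2^\lambda}(\mathbb{A})$; I read the $\B$ in that statement as $\mathbb{A}$.

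The intermediate step is to pass from $\G_\lambda^{2^\lambda}(\mathbb{A})$ to $\G_\lambda^{\infty}(\mathbb{A})$, where $\gcut$ may play partitions of any size. Since $|\mathbb{A}|=\lambda^+$, every partition of an element of $\mathbb{A}$ into nonzero pieces has cardinality at most $\lambda^+\le 2^\lambda$. A partition of size $\theta<2^\lambda$ can be padded with $0_{\mathbb{A}}$ to a partition indexed by $2^\lambda$, or re-indexed by a bijection if $\theta=2^\lambda$. So $\gchoose$ translates each move of $\gcut$ in $\G^\infty_\lambda(\mathbb{A})$ into a move of $\G^{2^\lambda}_\lambda(\mathbb{A})$, consults the winning strategy, and pulls the chosen piece back. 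The chosen piece is never $0_{\mathbb{A}}$, because the strategy keeps all infima nonzero. The resulting plays have the same sequence of chosen elements, hence the same infima, so the transferred strategy wins $\G^\infty_\lambda(\mathbb{A})$.

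Finally, I apply \autoref{cut-to-bm} to conclude that $\mathbb{A}$ is $(\lambda+1)$-strategically closed. The main obstacle is that \autoref{cut-to-bm} is stated for (complete) Boolean algebras, while $\mathbb{A}$ is only $<\!\lambda^+$-complete. I would check that Dobrinen's proof, like Veli\v{c}kovi\'c's, takes only meets of fewer than $\lambda^+$ elements along plays of length $\lambda+1$, so that $<\!\lambda^+$-completeness suffices. If it does not, I would instead pass to the completion $\mathrm{ro}(\mathbb{A})$. Since $\mathbb{A}$ is dense in its completion, partitions there refine to partitions by elements of $\mathbb{A}$, so a $\gchoose$ strategy transfers to $\G^\infty_\lambda(\mathrm{ro}(\mathbb{A}))$. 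Because $<\!\lambda^+$-completeness ensures that infima of length at most $\lambda$ computed in $\mathbb{A}$ agree with those in the completion, strategic closure of $\mathrm{ro}(\mathbb{A})$ then restricts back to $\mathbb{A}$ by the same density argument.
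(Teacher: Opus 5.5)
Your proposal is correct and follows the paper's own proof. The paper likewise applies \autoref{big-zapletal}, observes that partitions of $\mathbb{A}$ have size at most $\lambda^+\le 2^\lambda$ so that $\gchoose$ wins $\G^\infty_\lambda(\mathbb{A})$, and then applies \autoref{cut-to-bm}. Your padding argument and your treatment of the completeness hypothesis of \autoref{cut-to-bm} make explicit two points the paper passes over silently. The fallback through $\mathrm{ro}(\mathbb{A})$ is sound: the dense embedding preserves all existing meets, and strategic closure passes to dense subsets.
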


\begin{proof} Use \autoref{big-zapletal}, then observe that since $\lambda^+$ is the maximum possible size of a partition, this means that $\gchoose$ has a winning strategy in the variation of the game $\G^\infty_\lambda(\B)$, then apply \autoref{cut-to-bm}.\end{proof}

We will also need:

\begin{proposition} If $\lambda$ is regular and $\P$ is a $<\!\lambda$-closed forcing, $\P$ forces that a ground model poset $\mathbb{A}$ is $\gamma$-strategically closed for some $\gamma<\lambda$, then in the ground model $\mathbb{A}$ is $\gamma$-strategically closed.\end{proposition}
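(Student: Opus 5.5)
We follow the template of \autoref{closure-preservation1}. Work in $V$, let $\dot{\sigma}$ be a $\P$-name for a winning strategy for Player \textsf{II} in the Banach--Mazur game of length $\gamma$ on $\mathbb{A}$, forced by some $p_0$. Since $\mathbb{A}$ is a ground model poset, every partial play of Player \textsf{I} against $\dot\sigma$ of length $<\gamma$ is built from elements of $V$. Moreover, any sequence of length $<\lambda$ of elements of $V$ that lies in the extension already lies in $V$, because $\P$ is $<\!\lambda$-closed and $\gamma<\lambda$. So it makes sense to build the strategy in $V$ by deciding the values of $\dot\sigma$ along the tree of possible partial plays.

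\textbf{The assignment.} Define by induction on length an assignment $\vec{s} \mapsto p_{\vec{s}}$ on partial plays $\vec{s}$ (sequences of length $<\gamma$ of conditions of $\mathbb{A}$) together with a strategy $\tilde\sigma$ in $V$. The requirements are:
\begin{itemize}[$\circ$]
\item $p_{\vec{s}}$ extends $p_{\vec{t}}$ whenever $\vec{t}$ is an initial segment of $\vec{s}$;
\item whenever $\vec{s}$ ends at a stage where Player \textsf{II} moves (a limit or an even successor), $p_{\vec{s}}$ decides $\dot\sigma(\vec{s})$, and we set $\tilde\sigma(\vec{s})$ to be the decided value.
\end{itemize}
The construction proceeds in three cases.
\begin{enumerate}
\item At successor steps we extend $p$ once, either to decide the value of $\dot\sigma$ or trivially on Player \textsf{I}'s moves.
\item At limit lengths $\xi<\gamma$ we take a lower bound of $\seq{p_{\vec{s}\rest\zeta}}{\zeta<\xi}$. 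Such a bound exists since $\xi<\gamma<\lambda$ and $\P$ is $<\!\lambda$-closed.
\item Again at limits, we check that Player \textsf{II} can legally move. The lower bound $p_{\vec{s}}$ forces that $\vec{s}$ is a play consistent with $\dot\sigma$ of length $\xi<\gamma$, so $p_{\vec{s}}$ forces that $\vec{s}$ has a lower bound in $\mathbb{A}$. We then extend $p_{\vec{s}}$ to decide $\dot\sigma(\vec{s})$, which is a lower bound lying in $\check{\mathbb{A}}$.
\end{enumerate}

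\textbf{Verification.} Any play in $V$ according to $\tilde\sigma$ of length $\xi<\gamma$ follows a branch of the assignment, and its conditions $p$ form a decreasing sequence of length $<\lambda$. A lower bound $q$ of that sequence forces that the play is according to $\dot\sigma$, hence has a lower bound in $\mathbb{A}$. Being a lower bound is absolute between $V$ and the extension, since the play and $\mathbb{A}$ both lie in $V$, so such a bound exists in $V$. Thus $\tilde\sigma$ is winning in $V$.

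\textbf{Main obstacle.} The expected difficulty is organizing the construction so that it handles all plays simultaneously, since $\tilde\sigma$ must be defined on every partial play. The tree-indexed assignment handles this, exactly as in \autoref{closure-preservation1}, because each play is a single branch. We must also avoid needing a lower bound at length $\gamma$ itself: the Banach--Mazur game only requires lower bounds at stages $\xi<\gamma$, so closure under sequences of length $<\gamma\le\lambda$ suffices.
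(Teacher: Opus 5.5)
Your proposal is correct and is essentially the paper's argument: both build, in the ground model, a tree-indexed assignment of decreasing conditions of $\P$ along partial plays that decide the values of $\dot{\sigma}$, using $<\!\lambda$-closure (with $\gamma<\lambda$) to take lower bounds at limit stages. The only difference is cosmetic. You index the tree directly by partial plays rather than by ${}^{<\gamma}\Theta$ via enumerations of $\mathbb{A}\rest a$, and your explicit check that the decided value at a limit stage is a lower bound in $\mathbb{A}$ fills in a step that the paper's sketch leaves implicit.
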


We sketch an argument along the lines of \autoref{closure-preservation1}.

\begin{proof} Let $\bar{p}$ force that $\dot{\sigma}$ is a name for a winning strategy for player $\textsf{II}$. For each $A \in \mathbb{A}$, take some large enough $\Theta$ and let $\{b_\alpha^a : \alpha < \Theta \}$ enumerate $\mathbb{A} \rest a$, possibly with repetitions.

Build an assignment on ${}^{<\gamma}\Theta$ of the form $t \mapsto (a_t,p_t) \in \mathbb{A} \times \P$ with the property that $a_{t {}^\frown \langle \alpha \rangle} \in \{b_\alpha^{a_t} : \alpha < \Theta\}$. Each $t \in {}^{<\gamma}\Theta$ will be associated to a play of the game given by $\vec{s}_t:=\seq{a_{t \rest \xi},a^{t \rest \xi}_{t(\xi)}}{\xi \in \dom(t)}$.  We construct the assignment so that \emph{(1)} if $u \sqsupseteq t$ then $p_u \le_\P p_t$ and \emph{(2)} if $\dom(t)=\xi+1$ and $u {}^\frown \langle \alpha \rangle = t$, then $p_t \Vdash \bq \dot{\sigma}(\vec{s}_u {}^\frown b_\alpha^{a_u}) = a_t$.

The constructed assignment will be equivalent to a winning strategy in the ground model.\end{proof}

Lastly, we make an observation in the form of an easy proposition:

\begin{proposition}\label{easy-cnc} Let $\B$ be a complete Boolean algebra, let $\mu,\nu$ be cardinals, and let $\gamma<\delta$.

\begin{itemize}[$\circ$]
\item Suppose $\mu \le \mu'$ and $\nu \ge \nu'$. If $\gcut$ has a winning strategy for $\G^{\mu,<\nu}_{<\gamma}(\B)$, then $\gcut$ has a winning strategy for $\G^{\mu',<\nu'}_{<\delta}(\B)$.
\item Supose $\mu \ge \mu'$ and $\nu \le \nu'$. If $\gchoose$ has a winning strategy for $\G^{\mu,<\nu}_{<\delta}(\B)$, then $\gchoose$ has a winning strategy for $\G^{\mu',<\nu'}_{<\gamma}(\B)$.

\end{itemize}
\end{proposition}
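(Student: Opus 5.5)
The plan is a direct strategy-transfer argument: a winning strategy for either player in the "harder" game is converted into one for the "easier" game. The point is that $\gcut$ benefits from larger partitions ($\mu' \ge \mu$), smaller selections ($\nu' \le \nu$) and longer games ($\delta > \gamma$), while $\gchoose$ benefits from the reverse.

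For the first bullet, let $\sigma$ be a winning strategy for $\gcut$ in $\G^{\mu,<\nu}_{<\gamma}(\B)$. In $\G^{\mu',<\nu'}_{<\delta}(\B)$, $\gcut$ opens with the same $\bar{b}$. Each $\mu$-sized partition $\{b_i : i<\mu\}$ prescribed by $\sigma$ is played as a $\mu'$-sized partition. To do this, I would fix once and for all an identification of $\mu$ with a subset of $\mu'$ and pad the extra indices $i \in \mu'\setminus\mu$.

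The padding needs care because partitions are maximal antichains of $\B^+$, so zero entries are not literally allowed. The cleanest fix is to invoke \autoref{two-games} and work in the version where $\gcut$ partitions $1_\B$. There I would either allow zero padding or, if nonzero elements are required, refine a single piece $b_{i_0}$ into $\mu'\setminus\mu$ further pieces. In the refined case, any selection $B' \in [\mu']^{<\nu'}$ by $\gchoose$ is pulled back to $B=\{i<\mu : b_i \wedge \bigvee_{k\in B'} b'_k \ne 0_\B\}$. Since each piece of the refined partition lies below a unique $b_i$, we get $|B| \le |B'| < \nu' \le \nu$, so $B$ is a legal move in the original game. We also have $\bigvee_{k\in B'}b'_k \le \bigvee_{i\in B} b_i$.

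We then feed $B$ to $\sigma$. Because meets are monotone, the running meet $\bar{b}\wedge\bigwedge_\xi \bigvee_{k\in B'_\xi} b'^{\xi}_k$ of the new play is below the corresponding meet for the simulated $\sigma$-play. Since $\sigma$ is winning, the simulated meet hits $0_\B$ at some stage $\eta<\gamma<\delta$, so the new play also becomes zero by stage $\eta$. After that point $\gcut$ may play arbitrarily, and the win is secured at a round strictly inside $\delta$.

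The second bullet is the dual argument. Let $\tau$ be winning for $\gchoose$ in $\G^{\mu,<\nu}_{<\delta}(\B)$, and consider a play of $\G^{\mu',<\nu'}_{<\gamma}(\B)$ where $\mu'\le\mu$. Each $\mu'$-partition played by $\gcut$ is viewed as a $\mu$-partition, padded or refined as above. When $\tau$ answers with $B\in[\mu]^{<\nu}$, $\gchoose$ answers with the set of $\mu'$-indices $k$ whose piece meets $\bigvee_{i\in B}b_i$. This is a set of size $\le|B|$, but we need it to be $<\nu'$; that holds because $\nu\le\nu'$. Its join dominates the $\tau$-selection, so the running meets dominate those of a $\tau$-play. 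Since $\tau$ keeps every meet nonzero for all stages $<\delta$, in particular every stage $<\gamma$, the new play is won by $\gchoose$.

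The only real obstacle is the bookkeeping around padding partitions to a larger cardinality without zero elements. I would dispose of this first, via the refinement trick together with \autoref{two-games}. Everything else is monotonicity of $\wedge$ and $\vee$.
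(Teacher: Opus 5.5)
Your argument is correct. It is the natural monotonicity and strategy-transfer argument the statement calls for; the paper gives this proposition as an easy observation with no proof, so there is no separate route to compare.

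One correction to your bookkeeping: the refinement trick is not available in general. Splitting a piece into $|\mu'\setminus\mu|$ nonzero pieces needs an antichain of that size below it. In a ccc algebra such as the Cohen algebra, there is no partition into $\aleph_1$ nonzero pieces.

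In fact, if partitions had to consist of exactly $\mu'$ nonzero pieces, the first bullet would be false. In the Cohen algebra $\gcut$ wins the binary game of length $\omega$, because Cohen forcing adds a real. Yet $\gcut$ has no legal move in any version with $\mu'=\aleph_1$.

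So you must pad with $0_\B$, which the paper's indexed definition of a partition $\{b_i : i<\mu\}$ allows (pairwise disjoint members whose join is $b$). Then your pullback is simply $B'\mapsto B'\cap\mu$, and your pushforward is $B\mapsto B\cap\mu'$. The rest goes through as you wrote.

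The detour through \autoref{two-games} is harmless but unnecessary. In the first bullet, $\gcut$ can meet $\sigma$'s partition with the actual current position. In the second, $\gchoose$ can feed $\tau$ the current partition met with the simulated position.
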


Now we can prove the theorem:

\begin{proof}[Proof of \autoref{ap-undetermined}] Let $S \subseteq \kappa \cap \cof(\lambda)$ be stationary and costationary. Let $\P$ be the poset of closed bounded subsets $c$ of $\kappa$ such that $c \cap S = \emptyset$, and let $\B$ be the regular open algebra of $\P$. We will argue that $\G_\lambda^{\candc}(\B)$ is undetermined.

First, since $\kappa \in I[\kappa]$, $\B$ is $<\!\kappa$-distributive by an argument that is strictly easier than the one in \autoref{undetermined-form} below, so $\gcut$ does not have a winning strategy in $\G_\lambda^\infty(\B)$ (this also appears in \cite[page 718]{Foreman1983}). It follows from \autoref{easy-cnc} that $\gcut$ does not have a winning strategy in $\G^{\mu,<\nu}_\lambda$ for any $\mu,\nu$, so this takes care of both collections of cases.

Suppose for contradiction that $\gchoose$ has a winning strategy in $\G^{\mu,<\nu}_\lambda$ where $\nu \le \lambda^+ \le \mu$. Then \autoref{first-approach} implies that $\B$ preserves stationarity of $\S$, which is clearly a contradiction since $\P$ explicitly adds a club through the complement of $S$.

Suppose for contradiction that $\gchoose$ has a winning strategy for $\G_\lambda^\mu(\B)$ for $\mu \le \lambda$ ($\mu > \lambda$ is covered by the other cases). Then $\gchoose$ has a winning strategy for $\G_\lambda^\candc(\B)$ (again \autoref{easy-cnc}). We claim that $\mathbb{B}$ is $(\lambda+1)$-strategically closed. It is enough to prove that this assertion holds in an extension by $\Col(\lambda^+,|2^\lambda|)$: The collapse preserves $<\!\lambda^+$-completeness of $\mathbb{B}$ and forces $|\mathcal{P}(2^{<\lambda})| = 2^\lambda$, so we can apply \autoref{cut-to-bm} in the extension to conclude that $\mathbb{B}$ is $(\lambda+1)$-strategically closed in the extension. Using $\kappa \in I[\kappa]$, it follows that $\B$ preserves stationarity of all stationary subsets of $\kappa \cap \cof(\lambda)$, which implies that $\B$ preserves stationarity of $S$. But this is again a clear contradiction as in the previous set of cases.\end{proof}

\subsection{More Models with Undetermined Games}

\begin{theorem}\label{lower-bound} If $\lambda$ is regular and $\kappa=\lambda^+$ is not weakly compact in $L$, then there is a complete Boolean algebra $\B$ such that $\G^{\candc}_\lambda(\B)$ is not determined.\end{theorem}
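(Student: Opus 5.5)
We split according to whether the approachability hypothesis of \autoref{ap-undetermined} is available at $\kappa=\lambda^+$. If $\kappa \cap \cof(\lambda) \in I[\kappa]$ (in particular if $\lambda^{<\lambda}=\lambda$), then \autoref{ap-undetermined} directly produces a complete $\lambda$-closed Boolean algebra $\B$ for which $\G^{\candc}_\lambda(\B)=\G^2_\lambda(\B)$ is undetermined, and we are done. So the substantive case is when approachability fails on $\kappa\cap\cof(\lambda)$. Failure of approachability at $\lambda^+$ is known to have large cardinal strength; the plan is to show that when $\kappa$ is not weakly compact in $L$ we can replace the approachability sequence by an $L$-definable object that does the same job in the proof of \autoref{first-approach}.

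Concretely, since $\kappa$ is not weakly compact in $L$ (and $\kappa$ is regular in $V$, hence in $L$), Jensen's fine structure gives a $\square$-like or partial-square sequence in $L$: namely a sequence $\seq{C_\alpha}{\alpha<\kappa}\in L$ of clubs that is coherent and has no thread in $L$, and moreover is not threaded in $V$ on ordinals of cofinality $\lambda$ (non-weak-compactness in $L$ gives the $\square(\kappa)$-sequence in $L$, and a standard argument using the covering-type fact that $L$ computes enough of $\kappa$ shows it remains non-threadable). From such a coherent sequence one defines $\vec a=\seq{a_\alpha}{\alpha<\kappa}$ by enumerating the initial segments $C_\beta\cap\gamma$; coherence makes every $\delta$ with $\cf(\delta)=\lambda$ whose $C_\delta$ has order type $\lambda$ approachable with respect to $\vec a$. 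I would then take $S$ to be a stationary, costationary subset of $\kappa\cap\cof(\lambda)$ consisting of such approachable points (ordinals where $\ot(C_\delta)$ is small, which form a stationary set by a Fodor argument when $\kappa=\lambda^+$), let $\P$ be the poset of closed bounded subsets of $\kappa$ avoiding $S$, and let $\B=\ro(\P)$, exactly as in the proof of \autoref{ap-undetermined}.

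The two halves then go as before. To see that $\gcut$ has no winning strategy, show $\B$ is $<\!\kappa$-distributive (equivalently, by \autoref{foreman-generalization}, strongly distributive of the required length) using that $S$ consists of approachable points: build a descending sequence of conditions along an elementary submodel $M$ with $M\cap\kappa\notin S$ or approachable, and the approachability guarantees the limit can be closed off. To see that $\gchoose$ has no winning strategy, rerun the proof of \autoref{first-approach} (and the second case via \autoref{big-zapletal} and \autoref{cut-to-bm} after collapsing) with $\delta\in S$ chosen approachable; that argument only uses approachability of the particular $\delta$, so it shows a winning strategy for $\gchoose$ would make $\B$ preserve stationarity of $S$, contradicting that $\P$ shoots a club through $\kappa\setminus S$.

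The main obstacle is the middle step: extracting from ``$\kappa$ not weakly compact in $L$'' a sequence in $V$ witnessing enough approachability on a stationary subset of $\kappa\cap\cof(\lambda)$, while still retaining a stationary costationary $S$ on which both distributivity of $\P$ and the \autoref{first-approach} argument work. I would first try to use the $L$-square sequence directly, arguing that the points $\delta$ with $\cf^V(\delta)=\lambda$ and $\ot(C_\delta)$ of $V$-cofinality $\lambda$ form a stationary set, and that the \autoref{first-approach} argument only needs approachability relative to an $L$-sequence since the moves of $\gcut$ there are coded by bounded subsets of $\delta$ lying in $M$.
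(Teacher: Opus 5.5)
Your Case 1 is fine. Case 2 has a genuine gap at exactly the step you flag, and it cannot be repaired in the form you propose.

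First, there is no Fodor argument here. A $\square(\kappa)$-sequence, unlike a $\square_\lambda$-sequence, puts no bound on $\ot(C_\delta)$. Pressing down on, say, the $\lambda$-th element of $C_\delta$ only gives stationarily many $\delta$ whose clubs share an initial segment, and coherence permits that.

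More decisively, you observe yourself that all $\delta$ with $\ot(C_\delta)=\lambda$ are approachable with respect to a single $\vec a$. So the set you want to be stationary lies in $I[\kappa]$. Your Case 2 would therefore prove that non-weak-compactness of $\kappa$ in $L$ yields a stationary subset of $\kappa\cap\cof(\lambda)$ in $I[\kappa]$. That is not provable. Mitchell showed, from a greatly Mahlo cardinal (far below weak compactness), that it is consistent that $I[\omega_2]$ contains no stationary subset of $\omega_2\cap\cof(\omega_1)$. Over a ground where that cardinal is not weakly compact in $L$, Todorcevic's theorem still gives $\square(\omega_2)$.

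Switching to approachability ``relative to an $L$-sequence'' changes nothing. Approachability with respect to any sequence of bounded sets in $V$ is exactly what $I[\kappa]$ records, and the argument of \autoref{first-approach} genuinely needs a cofinal length-$\lambda$ sequence in $\delta$ with all initial segments in $M$. So the square sequence does no work in your plan. If $I[\kappa]$ has a stationary subset of $\kappa\cap\cof(\lambda)$, you could use it directly. If it has none, no choice of $\vec C$ will produce one. Separately, your distributivity step needs stationarily many approachable points \emph{outside} $S$, so $S$ would have to be a proper stationary piece of the approachable set, not merely costationary.

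The missing idea is to spend the hypothesis on \emph{non-reflection} rather than approachability. The $\gchoose$ side then needs no approachability at all, provided the stationary set being killed lives in $\cof(\omega)$. The paper proceeds as follows.
\begin{enumerate}
\item From $\square(\kappa)$ it takes stationary $S_0,S_1\subseteq\kappa\cap\cof(\omega)$ that do not reflect simultaneously.
\item Hence one of them, $S$, does not reflect at any point of a stationary $T'\subseteq T$, where $T\subseteq\kappa\cap\cof(\lambda)$ is approachable.
\item It then applies \autoref{undetermined-form} to club shooting through $\kappa\setminus S$.
\item $\gcut$ has no winning strategy because the forcing is $<\!\kappa$-distributive. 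Approachability at a point of $T'$ keeps the construction inside $M$, and non-reflection lets the limit stages avoid $S$.
\item $\gchoose$ has no winning strategy by \autoref{zapletal-thm}. After $\Col(\omega_1,\kappa)$, which adds no $\omega$-sequences, $\gchoose$ would still win $\G^\candc_\omega(\B)$. Yet $\B$ destroys the stationary subset of $\omega_1$ obtained by pulling $S$ back along a continuous cofinal map, so $\B$ is not semiproper.
\end{enumerate}

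The paper does not extract $T$ from the square sequence either. It gets $T$ from \autoref{easy-approach} after forcing $\kappa^{<\kappa}=\kappa$ with $\Col(\kappa,\kappa^{<\kappa})$, which preserves $\square(\kappa)$ by countable closure. So the obstacle you identified is real. The paper sidesteps it by forcing (as written, its algebra is produced in that extension) rather than resolving it in $V$.
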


Recall that a stationary subset $S \subseteq \kappa$ \emph{reflects} at $\delta \in \kappa \cap \cof(>\omega)$ if $S \cap \delta$ is stationary in $\delta$.
%The main observation for this section is the following:

\begin{proposition}\label{undetermined-form} Let $\lambda$ be regular and let $\kappa = \lambda^+$. Suppose that there are stationary sets $S \subseteq \kappa^+ \cap \cof(\omega)$ and $T \subseteq \kappa \cap \cof(\lambda)$ such that $T \in I[\kappa]$ and $S$ does not reflect at any point in $T$. Then there is a Boolean algebra $\B$ such that $\G^{\candc}_\lambda(\B)$ is undetermined.\end{proposition}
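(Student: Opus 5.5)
The algebra will be the regular open algebra of the club-shooting forcing through the complement of $S$. I read the hypothesis as $S \subseteq \kappa \cap \cof(\omega)$ stationary with $S \cap \delta$ nonstationary in $\delta$ for every $\delta \in T$; this is presumably what is intended, since $S$ must live in $\kappa$ for its non-reflection at points of $T \subseteq \kappa$ to make sense. Let $\P$ be the poset of closed bounded $c \subseteq \kappa$ with $c \cap S = \emptyset$, ordered by end-extension, and let $\B = \ro(\P)$. As in the proof of \autoref{ap-undetermined}, I will show separately that neither player has a winning strategy in $\G^\candc_\lambda(\B)$.

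\emph{$\gcut$ has no winning strategy.} By \autoref{foreman-generalization}, together with \autoref{easy-cnc} to pass to $\G^\infty_\lambda$, it suffices to show that $\P$ is strongly $(<\!\lambda+1,\mu)$-distributive for every $\mu$. In practice this means that for any sequence of $\lambda$ dense sets (or partitions) we can build a descending $(\lambda+1)$-sequence meeting them.
\begin{enumerate}
\item Fix the dense sets and a condition $p$. Choose $\delta \in T$ approachable with respect to the sequence $\vec{a}$ witnessing $T \in I[\kappa]$, with $\delta = M \cap \kappa$ for some $M \prec H(\Theta)$ containing all parameters.
\item Fix $A \subseteq \delta$ cofinal of order type $\lambda$ whose initial segments all lie in $M$ (approachability). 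Using non-reflection, fix a club $E \subseteq \delta$ of order type $\lambda$ with $E \cap S = \emptyset$, and thin $A$ and $E$ so that they interleave.
\item Build $p_\xi$ for $\xi<\lambda$ inside $M$, with $\max p_{\xi+1}$ beyond the $\xi$'th point of $A$, the $\xi$'th dense set met, and each $p_\xi$ chosen canonically (say by a well-order in $M$) from $A \cap \beta$, which is some $a_\gamma \in M$. This keeps every initial segment of the construction in $M$.
\item At a limit $\xi<\lambda$, take $p_\xi = \bigcup_{\zeta<\xi} p_\zeta \cup \{\sup\}$. This is a condition provided the supremum lies outside $S$. I arrange that by making the suprema at limit stages be exactly the limit points of $E$, i.e. by choosing $\max p_{\xi+1}$ in the interval of $E$ after the corresponding point of $A$.
\item At stage $\lambda$ the supremum is $\delta$, which has cofinality $\lambda$ and so is not in $S \subseteq \cof(\omega)$. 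Hence the $\lambda$'th lower bound also exists.
\end{enumerate}
The main obstacle is step 4: reconciling the requirement that the construction stay inside $M$ (which needs approachability) with the requirement that limit suprema avoid $S$ (which needs $E$, and $E$ need not belong to $M$). I will try to handle this by deciding only the \emph{ordinal targets} from $E$ externally, while the choices of conditions depend only on $A \cap \beta$ and the target ordinal, both of which lie in $M$. If that fails, I would instead first intersect $A$ with a club obtained from approachability, as in the standard proof that $I[\kappa]$ yields distributivity of club shooting.

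\emph{$\gchoose$ has no winning strategy.} Suppose toward a contradiction that $\gchoose$ has one. Pass to an extension by $\Col(\lambda^+, 2^\lambda)$, exactly as in the proof of \autoref{ap-undetermined}. This extension preserves the winning strategy, since the collapse is $<\!\lambda^+$-closed and so adds no new $\lambda$-length plays. It also preserves $<\!\lambda^+$-completeness of $\B$, and it gives the cardinal arithmetic needed for \autoref{big-zapletal}. There, \autoref{big-zapletal} and \autoref{cut-to-bm} make $\B$ $(\lambda+1)$-strategically closed, and the preceding proposition on $<\!\lambda$-closed forcing pulls this back to $V$. In particular $\B$ is $(\omega+1)$-strategically closed. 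The standard argument then applies: take a countable $N \prec H(\Theta)$ containing the strategy and a name $\dot C$ for a club, with $\sup(N \cap \kappa) \in S$, and run the strategy through $\omega$ dense sets of $N$. This produces a condition forcing $\sup(N\cap\kappa) \in \dot C$, so $\B$ preserves stationarity of subsets of $\kappa \cap \cof(\omega)$, in particular of $S$. But $\P$ adds a club disjoint from $S$, a contradiction.
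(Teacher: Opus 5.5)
Your reading of the hypothesis as $S \subseteq \kappa \cap \cof(\omega)$ is the intended one. Your $\gcut$ half is the paper's own argument: an approachable $\delta \in T$ with $\delta = M \cap \kappa$, an approachability witness $A$, and a club of $\delta$ missing $S$. The paper is no more explicit than you about the limit-stage issue you flag. The repair you suggest does not work as stated, though. At a limit stage $\xi$, the condition $\bigcup_{\zeta<\xi} p_\zeta \cup \{\sup\}$ depends on the whole sequence of targets chosen externally from $E$, so it need not lie in $M$. Once it leaves $M$, nothing guarantees an extension into the next dense set with maximum below $\delta$.

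The genuine gap is in the $\gchoose$ half. As stated, \autoref{big-zapletal} needs $|\mathcal{P}(2^{<\lambda})| = 2^\lambda$ and $|\B| = \lambda^+$, and $\Col(\lambda^+, 2^\lambda)$ does not provide the first. Being $<\!\lambda^+$-closed, it adds no subsets of $\lambda$ and preserves $\lambda^+$, so it cannot push $2^{<\lambda}$ below $\lambda^+$. Take $\lambda = \omega_1$ and $2^{\aleph_0} \ge \aleph_2$; this is exactly the $\MM^{++}$ setting in which the paper applies this proposition. After the collapse, $2^{<\lambda} = 2^\lambda = \aleph_2$, so $|\mathcal{P}(2^{<\lambda})| = 2^{\aleph_2} > 2^\lambda$. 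The paper makes the same claim in its proof of \autoref{ap-undetermined}, but it fails whenever $2^{<\lambda} > \lambda$. Getting $2^{<\lambda} = \lambda$ would need something like $\Col(\lambda, 2^{<\lambda})$, which adds $\lambda$-sequences and so need not preserve $\gchoose$'s strategy. Separately, $|\ro(\P)|$ can exceed $2^\lambda$, so the collapse need not make $|\B| = \lambda^+$; that part is repairable.

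Since you only use $(\omega+1)$-strategic closure in the end, the fix is to work at length $\omega$ from the start:
\begin{itemize}
\item By \autoref{easy-cnc}, $\gchoose$ wins $\G^\candc_\omega(\B)$.
\item A countably closed collapse such as $\Col(\omega_1,|\B|)$ preserves that strategy, countable completeness, and the stationarity of $S$ in $\kappa$ (now of cofinality $\omega_1$).
\item At length $\omega$ the arithmetic hypothesis is vacuous, so the rest of your argument goes through in the extension.
\end{itemize}
The paper does this more directly. It forces with $\Col(\omega_1,\kappa)$ and invokes Zapletal's theorem (\autoref{zapletal-thm}) that $\gchoose$ winning $\G^\candc_\omega(\B)$ makes $\B$ semiproper. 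This is contradicted because $\P$ still shoots a club through $\kappa \setminus S$. That club destroys the stationary set $\Phi^{-1}[S] \subseteq \omega_1$, where $\Phi:\omega_1 \to \kappa$ is continuous and cofinal.
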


This proof will use a theorem of Zapletal.

\begin{fact}[Zapletal]\label{zapletal-thm} If $\B$ is a \emph{countably complete} Boolean algebra and $\gchoose$ has a winning strategy for $\G^{\candc}_\omega(\B)$, then $\B$ is semiproper \cite[Theorem 1]{Zapletal1995}.\end{fact}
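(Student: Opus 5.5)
Since this is Zapletal's theorem, the plan is to reconstruct its proof in the language of this section. The strategy is to verify semiproperness directly. Fix a winning strategy $\sigma$ for $\gchoose$ in $\G^{\candc}_\omega(\B)$, a large $\Theta$, and a countable $M \prec H(\Theta)$ with $\B,\sigma \in M$, together with some $\bar{b} \in M \cap \B^+$. Let $\delta = M \cap \omega_1$ and let $\seq{\dot{\alpha}_n}{n<\omega}$ enumerate the $\B$-names in $M$ for countable ordinals. The goal is a condition $b_* \le_\B \bar{b}$ such that $b_* \Vdash \bq \dot{\alpha}_n < \delta \eq$ for every $n$, i.e.\ a semigeneric condition for $M$.

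To get $b_*$, I will have $\gcut$ open with $\bar{b}$ and run a play of length $\omega$ in which every one of $\gcut$'s moves lies in $M$ and $\gchoose$ answers by $\sigma$. Because each finite partial play is built from elements of $M$ together with $\sigma \in M$, every finite partial play lies in $M$. I will split $\omega$ into infinitely many infinite blocks and devote the $n$'th block to the name $\dot{\alpha}_n$. At the end I set
\[
b_* = \bar{b} \wedge \bigwedge_{k<\omega} b_k^{\epsilon_k},
\]
where $b_k^{\epsilon_k}$ is $\gchoose$'s choice in round $k$. This meet is a countable meet, so it exists by countable completeness, and it is nonzero because $\sigma$ is winning.

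The essential step is to choose the binary partitions in the $n$'th block so that $b_*$ forces $\dot{\alpha}_n$ below $\delta$. Inside $M$ I will consider, for the current finite partial play $s \in M$, the set of ordinals $\xi<\omega_1$ for which $\gcut$ can play binary questions of the form $\{[\![\dot{\alpha}_n \in X]\!], \sim [\![\dot{\alpha}_n \in X]\!]\}$ so that $\gchoose$'s $\sigma$-responses stay compatible with $\dot{\alpha}_n = \xi$. The plan is to show, using a tree argument like the one in the proof of \autoref{foreman-generalization}, that after finitely many such questions there is an element of $M$ that bounds $\dot{\alpha}_n$ on $\gchoose$'s current choice. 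Concretely, $\gcut$ should ask bisection questions about $\dot{\alpha}_n$ relative to a fixed enumeration in $M$. Once the answer set forced below $\gchoose$'s current choice is a countable set in $M$, its supremum lies in $M \cap \omega_1 = \delta$. Since $b_*$ is below that choice, $b_*$ forces $\dot{\alpha}_n < \delta$.

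I expect this last step to be the main obstacle. A binary question game of length $\omega$ cannot directly pin down an ordinal below $\omega_1$ by bisection, so the argument has to exploit the fact that $\sigma$ wins against \emph{every} $\gcut$ strategy, including those that ask about sets $X \in M$ chosen adaptively. What I would try first is to argue by contradiction inside $M$. Suppose that for some partial play $s \in M$, no finite sequence of questions confines $\dot{\alpha}_n$ to a countable set. Then by elementarity there is such a witness in $H(\Theta)$, and from it I would construct a $\gcut$ strategy that, against $\sigma$, makes the meet of $\gchoose$'s choices equal to $0_\B$. This would come from a diagonalization along $\omega_1$ that reduces each remaining alternative, in the style of the tree construction in \autoref{foreman-generalization}, and it contradicts that $\sigma$ is winning.
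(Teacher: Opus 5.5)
The frame is fine: find a semigeneric $b_*\le\bar b$ for a countable $M\ni\sigma$, keep every finite partial play in $M$, and take the countable meet. The paper does not reprove this fact; it cites Zapletal and only remarks that his argument uses countable meets and joins. The genuine gap is the step you flag as the obstacle, which carries all the content of Zapletal's theorem and which the proposal does not prove.

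\textbf{The bounding claim is not forced by your questions.} You claim that from any position $s\in M$, finitely many binary questions produce a $\sigma$-choice $c$ with $c\Vdash\dot{\alpha}_n<\gamma$ for some $\gamma<\omega_1$. Nothing you describe forces this.
\begin{itemize}
\item If $\dot\alpha_n$ is unbounded below a piece, it is unbounded below at least one of its two halves. So nothing rules out that $\sigma$ answers ``$\dot\alpha_n\ge\gamma$'' to every question $[\![\dot{\alpha}_n<\gamma]\!]$ with $\gamma\in M$. If you ask with $\gamma$ cofinal in $\delta$, the resulting $b_*$ forces $\dot{\alpha}_n\ge\delta$, the opposite of what you want.
\item Bisection along a coding $e:\omega_1\to 2^\omega$ in $M$ fails too. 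After finitely many answers the remaining set of values is $e^{-1}$ of a basic clopen set, which $\gchoose$ can always keep uncountable. In the limit $b_*\Vdash\dot{\alpha}_n=e^{-1}(r^*)$, but $r^*$ is assembled along a sequence of positions that is not in $M$, so there is no reason for $r^*\in M$.
\item For a merely countably complete $\B$, values like $[\![\dot{\alpha}_n\in X]\!]$ need not belong to $\B$, so these questions need not even be legal moves.
\end{itemize}

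\textbf{The contradiction you sketch cannot close the gap.} The negation of your claim only says that every finite-stage choice leaves $\dot{\alpha}_n$ unbounded, and that information alone never yields a play with meet $0_{\B}$.
\begin{itemize}
\item Suppose $\gcut$ adaptively asks questions of the form $[\![\dot{\alpha}_n\in Y]\!]$. There are only countably many positions, so every piece that bounds $\dot{\alpha}_n$ bounds it below a single $\gamma^*<\omega_1$.
\item A chooser can fix $\xi\ge\gamma^*$ with $c_s\wedge[\![\dot{\alpha}_n=\xi]\!]\ne 0_{\B}$ and always take the side containing $\xi$. This keeps $\dot{\alpha}_n$ unbounded at every finite stage, and all choices stay above $c_s\wedge[\![\dot{\alpha}_n=\xi]\!]$, so the meet is nonzero.
\item Hence the fact that $\sigma$ never bounds $\dot{\alpha}_n$ gives $\gcut$ nothing to diagonalize against. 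A refutation would have to exploit finer features of $\sigma$, or other kinds of questions, and you do not say what these are. The appeal to elementarity does no work here.
\end{itemize}

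\textbf{What binary questions do give, and what they miss.} Binary questions cheaply give a win against \emph{countable} partitions: ask $a_0$?, $a_1$?, \dots. A winning $\sigma$ must eventually accept some $a_k$, since otherwise the meet lies below $\bigwedge_k\sim a_k=0_{\B}$. This yields a $b_*$ that is $M$-generic for countable antichains in $M$. The $\aleph_1$-sized antichains deciding a name for a countable ordinal are exactly what this does not reach, and that is the missing idea.

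Finally, finite-stage bounding is stronger than semigenericity requires, since only the final meet must force $\dot{\alpha}_n<\delta$. You give no argument that it holds for an arbitrary winning $\sigma$.
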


As in \autoref{big-zapletal}, the theorem is stated for complete Boolean algebras, but  that examination of the proof shows that the theorem holds for countably complete Boolean algebras because only countable meets and joins are used.

\begin{proof}[Proof of \autoref{undetermined-form}] Given $S$ and $T$ as stated, let $\P$ be the poset of closed bounded subsets $c$ of $\kappa$ such that $c \cap S = \emptyset$, and let $\B:=\ro(\P)$ be its regular open algebra. We claim that $\B$ witnesses the proposition.

We argue that $\gchoose$ does not have a winning strategy in the game $\G_\lambda^{\candc}(\B)$. If there were such a strategy, there would be a winning strategy for $\gchoose$ in $\G_\omega^{\candc}(\B)$. Let $G$ be $\Col(\omega_1,\kappa)$-generic and work in $V[G]$: Then $S$ is still stationary in $\kappa$ (see \cite[Proposition 2.16]{Handbook-Eisworth}) and $\gchoose$ still has a winning strategy in $\G_\omega^{\candc}(\B)$ since no new plays of the game have been added. Let $\Phi:\omega_1 \to \kappa$ be strictly increasing, continuous, and cofinal in $\kappa$, and let $\bar{S}:= \{\alpha < \omega_1 \mid \Phi(\alpha) \in S\}$. Then forcing with $\P$, hence 
$\B$ (the density relation still holds), destroys stationarity of $\bar{S}$. Therefore, $\B$ is not semiproper in $V[G]$. Since $\B$ remains countably complete in $V[G]$, this is a contradiction of \autoref{zapletal-thm}.

It is also the case that $\gcut$ does not have a winning strategy in $\G_\lambda^{\candc}(\B)$ because $\B$ is $<\!\kappa$-distributive (\autoref{foreman-generalization}). The distributivity itself follows by a standard argument: Fix a sequence $\vec{a} = \seq{a_\alpha}{\alpha<\kappa}$ of $[\kappa]^{<\kappa}$ and suppose $\dot{f}$ is a $\P$-name for a function $\kappa \to \ON$. Choosing $\Theta$ large enough for $H(\Theta)$ to contain the sets in the following discussion, take an elementary submodel $M \prec H(\Theta)$ with $\vec{a} \in M$ and $M \cap \kappa = \delta \in T$ approachable with respect to $\vec{a}$. Let $D \subseteq \delta$ witness that $S$ is nonreflecting in $\delta$. Let $A \subseteq \delta$ cofinal with $\ot(A) = \cf(\delta)$ witness approachability. Construct a $\le_\P$-decreasing sequence $\seq{p_\xi}{\xi<\cf(\delta)} \subset \P$ such that $p_\xi$ decides a value for $\dot{f}(\xi)$, and use that fact that initial segments of $A$ are in $M$ and the fact that $D$ is closed in order to continue the construction at limit stages. Then $\bar{p} := \{\delta\} \cup \bigcup_{\xi<\cf(\delta)}p_\xi$ is an element of $\P$ and forces that $\dot{f}=g$ where $g(\xi)=\beta_\xi$ if and only if $p_\xi \Vdash \bq\dot{f}(\xi)=\beta_\xi \eq$.\end{proof}

Towards the proof of \autoref{lower-bound}, we provide a proof of a folklore result.

\begin{proposition}\label{easy-approach} If $\kappa^{<\kappa}=\kappa$, then for all regular $\tau<\kappa$, there is a stationary set $S \subseteq \kappa \cap \cof(\tau)$ which is approachable.\end{proposition}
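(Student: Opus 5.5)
We prove the standard folklore statement: if $\kappa^{<\kappa}=\kappa$, then for every regular $\tau<\kappa$ the set $\kappa\cap\cof(\tau)$ contains a stationary subset $S\in I[\kappa]$. In fact we will show that $S=\kappa\cap\cof(\tau)$ itself works, modulo a club: there is a sequence $\vec a=\seq{a_\alpha}{\alpha<\kappa}$ and a club $C\subseteq\kappa$ such that every $\gamma\in C\cap\cof(\tau)$ is approachable with respect to $\vec a$. Since $\kappa\cap\cof(\tau)$ is stationary for regular $\tau<\kappa$, and intersecting a stationary set with a club leaves it stationary, this suffices.

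\textbf{Step 1: the sequence $\vec a$.} Since $\kappa^{<\kappa}=\kappa$ and $\kappa$ is regular, the family $[\kappa]^{<\kappa}$ of bounded subsets of $\kappa$ has size $\kappa$. Fix an enumeration $\vec a=\seq{a_\alpha}{\alpha<\kappa}$ of all bounded subsets of $\kappa$.

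\textbf{Step 2: the club $C$.} Define
\[
C=\{\gamma<\kappa \mid [\gamma]^{<\kappa}\subseteq\{a_\alpha:\alpha<\gamma\}\}.
\]
For each $\beta<\kappa$ we have $|\mathcal P(\beta)|\le\kappa^{<\kappa}=\kappa$, so every subset of $\beta$ appears as some $a_\alpha$ with $\alpha$ below some bound $g(\beta)<\kappa$ (using regularity of $\kappa$). Then $C$ contains the set of closure points of $g$, which is a club.

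\textbf{Step 3: approachability of $\gamma\in C\cap\cof(\tau)$.} Choose any cofinal $A\subseteq\gamma$ of order type $\tau=\cf(\gamma)$. For each $\beta<\gamma$, the set $A\cap\beta$ is a subset of $\beta$, hence a bounded subset of $\kappa$ contained in $\gamma$. By the definition of $C$, $A\cap\beta=a_\alpha$ for some $\alpha<\gamma$. Thus $\gamma$ is approachable with respect to $\vec a$.

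Consequently $S:=C\cap\cof(\tau)$ is stationary and consists entirely of approachable points, as required.

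The only place requiring care is the closure argument in Step 2, where one must check that $C$ really is club. This uses exactly $\kappa^{<\kappa}=\kappa$ together with the regularity of $\kappa$. Everything else is immediate from the definitions.
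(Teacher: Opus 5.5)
\textbf{There is a genuine gap: your Step 2 is false, and it cannot be repaired within your strategy.}

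As you define it, $C$ is empty. For $\gamma<\kappa$ we have $[\gamma]^{<\kappa}=\mathcal P(\gamma)$, while $|\{a_\alpha:\alpha<\gamma\}|\le|\gamma|<2^{|\gamma|}$ by Cantor's theorem. Step 3 only uses the weaker property that every \emph{bounded} subset of $\gamma$ (such as $A\cap\beta$) is enumerated before $\gamma$. But the closure-point argument for that weaker property still needs $g(\beta)<\kappa$, i.e.\ that the indices of \emph{all} subsets of $\beta$ are bounded in $\kappa$. Regularity of $\kappa$ only bounds sets of fewer than $\kappa$ indices, so you would need $|\mathcal P(\beta)|<\kappa$ for every $\beta<\kappa$, i.e.\ $\kappa$ a strong limit. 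In the case the paper cares about, $\kappa=\lambda^+$, the hypothesis $\kappa^{<\kappa}=\kappa$ means $2^\lambda=\kappa$. Then $\mathcal P(\lambda)$ has size $\kappa$ and its indices are cofinal in $\kappa$.

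Worse, the conclusion you are aiming at is not provable from the hypothesis. Your club does not depend on $\tau$, and Step 3 works for points of every cofinality. So your argument would yield the full approachability property $\kappa\in I[\kappa]$ from $\kappa^{<\kappa}=\kappa$ alone. But under $\PFA$ one has $2^{\aleph_1}=2^{\aleph_0}=\aleph_2$, so $\kappa=\aleph_2$ satisfies $\kappa^{<\kappa}=\kappa$. Yet approachability fails at $\aleph_2$, as the paper itself notes.

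\textbf{How the paper avoids this.} The proposition only asks for a \emph{stationary} set. The paper gets it by building one approachable point inside an arbitrary club, rather than by finding a club of approachable points.
\begin{enumerate}
\item Fix $\vec a$ enumerating $\kappa^{<\kappa}$ and a club $D\subseteq\kappa$.
\item Build a strictly increasing continuous sequence $\seq{\delta_\xi}{\xi<\tau}\subseteq D$ such that each $\delta_{\xi+1}$ exceeds the index $\alpha$ with $a_\alpha=\{\delta_\zeta:\zeta<\xi\}$.
\item Then $\delta=\sup_{\xi<\tau}\delta_\xi\in D\cap\cof(\tau)$.
\item The set $A=\{\delta_\xi:\xi<\tau\}$ witnesses that $\delta$ is approachable. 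Each $A\cap\beta$ with $\beta<\delta$ is $\{\delta_\zeta:\zeta<\xi\}$ for the least $\xi$ with $\delta_\xi\ge\beta$, and its index lies below $\delta_{\xi+1}<\delta$.
\end{enumerate}
Each step needs only a single set to have some index below $\kappa$, which is exactly what $\kappa^{<\kappa}=\kappa$ provides. No entire power set has to be enumerated early. Hence the approachable points of cofinality $\tau$ meet every club, which is all the statement requires.
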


\begin{proof} Fix an enumeration $\vec{a} = \seq{a_\alpha}{\alpha<\kappa}$ of $\kappa^{<\kappa}$. Let $C \subseteq \kappa$ be a club. Inductively define a continuous sequence $\seq{\delta_\xi}{\xi<\tau} \subseteq C$ so that if $\seq{\delta_\xi}{\xi \le \zeta}$ has been defined and equal to $a_\alpha$, then $\delta_{\zeta+1}$ is chosen to be larger than $a_\alpha$. Then if $\delta = \sup_{\xi<\tau}$, then clearly $\delta \in C$ and $\seq{\delta_\xi}{\xi<\tau}$ witnesses approachability of $\delta$.\end{proof}

\begin{proof}[Proof of \autoref{lower-bound}] We  will list some facts that we will apply, then identify a candidate for $\B$, and then indicate why it witnesses the theorem.

For these facts, assume $\kappa$ is a regular cardinal.

\begin{enumerate}

\item  If $\kappa$ is not weakly compact in G{\"o}del's Constructible Universe $L$, then the principle $\square(\kappa)$ holds \cite[1.10]{Todorcevic1987}.

\item If $\square(\kappa)$ holds and $\P$ is a countably closed forcing, then $\square(\kappa)$ holds after forcing with $\P$ (this is a variant of, e.g.\ \cite[Lemma 4.5]{AST-Magidor}).

\item If $\square(\kappa)$ holds and $\tau < \kappa$ is regular and $S \subseteq \kappa \cap \cof(\tau)$ is stationary, then there are stationary sets $S_0,S_1 \subseteq S$ that do not reflect simultaneously (many variants exist, e.g.\ in \cite[Theorem 2.2]{Hayut-LambieHanson2017}).

\end{enumerate}

If $\kappa=\lambda^+$ is not weakly compact in $L$, then we have $\square(\kappa)$ by (1). Then, forcing with $\Col(\kappa,\kappa^{<\kappa})$ if necessary, we preserve $\square(\kappa)$ by (2) and guarantee that there is an approachable stationary set $T \subseteq \kappa \cap \cof(\lambda)$ by \autoref{easy-approach}.

Using (3), let $S_0,S_1 \subseteq \kappa \cap \cof(\omega)$ be stationary subsets that do not reflect simultaneously. Then for some $\epsilon \in \{0,1\}$ and some stationary $T' \subseteq T$, $S_\epsilon$ does not reflect at points in $T'$: otherwise there would be clubs $C_0,C_1 \subseteq \kappa$ such that $S_\epsilon$ reflects at all points in $T' \cap C_\epsilon$ (for $\epsilon \in \{0,1\}$), and then there would be a point in $T \cap C_0 \cap C_1$ at which $S_0$ and $S_1$ reflect simultaneously, a contradiction.

Hence there are stationary $S \subseteq \kappa \cap \cof(\omega)$ and $T \subseteq \kappa \cap \cof(\lambda)$ such that $T \in I[\kappa]$ and $S$ is nonreflecting in $T$, and so we can apply \autoref{undetermined-form}.\end{proof}

Note that successive failures of $\square(\kappa)$ also imply the consistency of very substantial large cardinals \cite{Schimmerling1995}.

We can show that Martin's Maximum ($\MM$) \cite{Foreman-Magidor-Shelah1989} is compatible with the presence of a Boolean algebra $\B$ for which the cut and choose game of length $\omega_1$ is undetermined. Notably, the proper forcing axiom ($\PFA$), which is weaker than $\MM$, implies that failure of approachability at $\aleph_2$ \cite{Koenig-Yoshinobu2004}.

\begin{theorem} The axiom $\MM^{++}$ is compatible with the existence of a complete Boolean algebra $\B$ such that $\G_{\omega_1}^{\candc}(\B)$ is undetermined.\end{theorem}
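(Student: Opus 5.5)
The plan is to apply \autoref{undetermined-form} with $\lambda=\omega_1$ and $\kappa=\omega_2$ inside a model of $\MM^{++}$. That proposition needs two stationary sets: $S\subseteq\omega_2\cap\cof(\omega)$ and $T\subseteq\omega_2\cap\cof(\omega_1)$ with $T\in I[\omega_2]$, such that $S$ does not reflect at any point of $T$. Full approachability fails under $\PFA$, but we only need an approachable stationary set of cofinality-$\omega_1$ points, which is much weaker.

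The obstacle is that $\MM$ implies every stationary subset of $\omega_2\cap\cof(\omega)$ reflects at some point of cofinality $\omega_1$. In fact it reflects at stationarily many such points. So $T$ must be chosen carefully, avoiding the reflection points of one particular $S$ while still being stationary.

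First I would handle $T$, which I think is easy. In $\ZFC$, $I[\omega_2]$ contains a stationary subset of $\omega_2\cap\cof(\omega_1)$ (Shelah). I would check this by running the argument of \autoref{easy-approach} with $\seq{a_\alpha}{\alpha<\omega_2}$ enumerating the countable subsets of $\omega_2$ inside an $\omega_2$-sized model. Since $2^{\aleph_0}=\aleph_2$ under $\MM$, we have $\aleph_2^{\aleph_0}=\aleph_2$, so the proof of \autoref{easy-approach} applies with $\tau=\omega_1$. That proof only needs initial segments of length $<\omega_1$, which are countable. So the approachable points of cofinality $\omega_1$ form a stationary set $T_0$.

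The hard step is finding $S$ that fails to reflect on a stationary $T\subseteq T_0$. I would force over a model of $\MM^{++}$ rather than look for $S$ in $\ZFC$. I would try two routes.

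The first route starts from $V\models\MM^{++}$. Fix $T_0$ as above. Force to add a stationary $S\subseteq\omega_2\cap\cof(\omega)$ by initial segments that are nonstationary at every point of $T_0$; equivalently, shoot a club through the cofinality-$\omega_1$ points avoiding reflection. This poset should be $\sigma$-closed and $\omega_2$-distributive, using approachability of points in $T_0$ as in the distributivity argument of \autoref{undetermined-form}. The key remaining input is a preservation theorem. Larson's result says $\MM^{++}$, and $\MM$, are preserved by $<\!\omega_2$-directed closed forcing. Our poset is not directed closed, so this route may fail.

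The fallback route is to build the model directly. Start with a supercompact $\theta$ and force $\MM^{++}$ by the usual semiproper RCS iteration of length $\theta$. Interleave, or prepend, a guessing argument so that in the final model there are a stationary $S$ and a stationary approachable $T$ with $S$ nonreflecting on $T$. Concretely, I would pick $T$ to be the set of $\delta$ of cofinality $\omega_1$ where $S\cap\delta$ is nonstationary. I would argue that this $T$ is stationary by using the fact, consistent with $\MM$ (Magidor-style models), that the reflection points of $S$ need only contain $\omega_2\cap\cof(\omega_1)$ modulo a set whose complement is still stationary. I would then intersect this $T$ with $T_0$ using a refinement of the stationary-set splitting, as in fact (3) from the proof of \autoref{lower-bound}.

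Once $S,T$ exist in the $\MM^{++}$ model, \autoref{undetermined-form} directly gives the Boolean algebra $\B=\ro(\P)$ with $\G^{\candc}_{\omega_1}(\B)$ undetermined. I expect verifying the compatibility with $\MM^{++}$ to be the main obstacle.
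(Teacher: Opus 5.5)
There is a genuine gap, at exactly the step you flag. Your reduction is the right one: produce $S$ and $T$ as in \autoref{undetermined-form} inside a model of $\MM^{++}$, using Larson's preservation theorem (extended to $\MM^{++}$ by Cox) as the tool. But neither of your routes actually produces such a model.

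In your first route you fix $T_0$ in the ground model. Then at a limit $\delta\in T_0$, a descending $\omega_1$-sequence of conditions whose union makes $S\cap\delta$ stationary has no lower bound. So the poset is only $\sigma$-closed and strategically closed, Larson's theorem does not apply, and nothing else in your argument shows that $\MM^{++}$ survives.

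Your fallback route is circular. The ``fact, consistent with $\MM$'' that some $S$ fails to reflect on a stationary set of cofinality-$\omega_1$ points is precisely what has to be proved. The intersection step also fails. Fact (3) from the proof of \autoref{lower-bound} needs $\square(\omega_2)$, which $\PFA$ refutes. And two stationary sets need not have stationary intersection.

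The missing idea is to add $T$ generically together with $S$, putting a point into $T$ only when that is already safe. This is what the paper does. Since $\MM$ gives $2^{\aleph_1}=\aleph_2$, fix an enumeration $\vec a$ of the bounded subsets of $\omega_2$ in length $\omega_2$. Let conditions be functions $p:\alpha+1\to 3$ with $\alpha<\omega_2$, subject to:
\begin{enumerate}
\item $p(\beta)=1$ only if $\cf(\beta)=\omega$;
\item $p(\beta)=2$ only if $\cf(\beta)=\omega_1$, $\beta$ is approachable with respect to $\vec a$, and $\{\gamma<\beta\mid p(\gamma)=1\}$ is nonstationary in $\beta$.
\end{enumerate}
Every descending sequence of length $<\omega_2$ has a lower bound: put $0$ at the supremum. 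Conditions with ordinal domains are compatible only if comparable, so the poset is $<\!\omega_2$-directed closed, and $\MM^{++}$ is preserved.

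Non-reflection of $S$ at points of $T$ is built into the conditions, and stationarity of $S$ is routine. Stationarity of $T$ is a density argument. Given $p_0\Vdash\bq\dot C$ is club$\eq$, build a descending $\seq{p_i}{i<\omega_1}$ and ordinals $\alpha_i$ with $p_i\Vdash\bq\alpha_i\in\dot C\eq$, interleaved with $\max\dom(p_i)$. At limit stages put $0$ at the suprema; these form a club below $\alpha_*=\sup_i\alpha_i$, so the $1$-set is nonstationary there. Choose each $\alpha_{i+1}$ above the $\vec a$-index of $\{\alpha_j\mid j<i\}$, so that $\alpha_*$ is approachable. Then setting $p_*(\alpha_*)=2$ is legal.

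A smaller point concerns your claim that $\ZFC$ (via Shelah) puts a stationary subset of $\omega_2\cap\cof(\omega_1)$ into $I[\omega_2]$. That claim is false: Shelah's theorem needs $\mu^+<\lambda$, and Mitchell showed this can fail. Your direct argument from $\aleph_2^{\aleph_0}=\aleph_2$ is nonetheless correct under $\MM$.
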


\begin{proof} By a theorem of Larson, $\MM$ is preserved under $\omega_2$-directed closed forcing extensions \cite{Larson2000}. Cox observed that this applies to $\MM^{++}$ as well \cite[Theorem 4.7]{Cox2021}.

Therefore, by \autoref{undetermined-form}, it is enough to show that there is a $\omega_2$-directed closed poset adding stationary sets $S \subseteq \omega_2 \cap \cof(\omega)$ and $T \subseteq \omega_2 \cap \cof(\omega_1)$ such that $T$ consists of approachable points and $S$ is nonreflecting in $T$. (The poset we provide will just be equivalent to $\Add(\omega_2)$.)

Fix an enumeration $\seq{a_\alpha}{\alpha<\omega_2}$ of $\omega_2^{<\omega_2}$. Let $\P$ consist of partial functions $p:\omega_2 \rightharpoonup 3$ such that:

\begin{enumerate}
\item $\dom(p) \in \omega_2$ is a successor ordinal,
\item if $p(\alpha)=1$ then $\cf(\alpha)=\omega$,
\item if $p(\alpha)=2$ then $\cf(\alpha)=\omega_1$, $\alpha$ is approachable with respect to $\vec{a}$, and $\{\beta<\alpha \mid p(\beta) = 1\}$ is nonstationary in $\alpha$.
\end{enumerate}

If $G$ is $\P$-generic, we let $S = \{\alpha<\omega_2 \mid \exists p \in G, p(\alpha) = 1\}$ and $T = \{\alpha < \omega_2 \mid \exists p \in G, p(\alpha)=2\}$. The poset $\P$ is $<\!\omega_2$-closed because, given a $\le_\P$-decreasing $\seq{p_\xi}{\xi<\tau}$ with $\delta_\xi = \dom(p_\xi)$, one can obtain a lower bound with $p(\delta)=0$ where $\delta = \sup_{\xi<\tau}\delta_\xi$.

We will prove the stationarity of $T$ since it is slightly more involved than stationarity of $S$, which follows from relatively standard arguments. Take some $p_0 \Vdash \bq \dot{C}$ a club subset of $\omega_2\eq$, then build a $\le_\P$ decreasing sequence $\seq{p_i}{i<\omega_1}$ and a sequence of ordinals $\seq{\alpha_i}{i<\omega_1}$ interleaved with $\seq{\max \dom p_i}{i<\omega_1}$ such that $p_i \Vdash \bq \alpha_i \in \dot{C}\eq$, and moreover at successor steps make sure that $\alpha_{i+1}$ is large enough so that if $\seq{\alpha_j}{j<i}=a_\xi$, then $\xi<\alpha_{i+1}$. Then choose a lower bound $p_*$ such that, with $\alpha_* = \sup_{i<\omega_1}\alpha_i$, we have $p_* \Vdash \bq \alpha_* \in \dot{T} \eq$, and where we necessarily have $p_* \Vdash \bq \alpha_* \in \dot{C} \eq$.\end{proof}

We can also look for more undetermined games. \autoref{global-zapletal}, for example, leaves a question about successors of singular cardinals.

\begin{conjecture} The consistency of $\ZFC$ implies the consistency of the statement that there is some complete $\B$ such the game $\G^\candc_{<\aleph_{\omega+1}}(\B)$ is undetermined.\end{conjecture}

\subsection*{Acknowledgments} Many thanks to Heike Mildenberger for reading early versions of \autoref{zapletal-answer}. I would also like to thank Tom Benhamou for enriching conversations that overlapped with the notions in this paper.

\bibliographystyle{alpha}
\bibliography{bibliography}

\end{document}